\def\ds{\displaystyle}
\def\A{\mathcal{A}}
\newcommand{\ex}{\mathbf{E}}
\newcommand{\PP}{\mathbf{P}}
\newtheorem{theorem}{Theorem}[section]
\newtheorem{remark}[theorem]{Remark}
\newtheorem{lemma}[theorem]{Lemma}
\newtheorem{proposition}[theorem]{Proposition}
\title{Random cubic planar maps}
\author{
	Michael Drmota
	\thanks{
		Institute for Discrete Mathematics and Geometry of the Technische Universit\"at Wien, Austria.
		E-mail: {\tt michael.drmota@tuwien.ac.at}.
		Supported by the Special Research Program SFB F50-02 \textit{Algorithmic and Enumerative Combinatorics}, and by the project P35016  {\it Infinite Singular Systems and Random Discrete Objects} of the Austrian Science Fund FWF.
	}
	\and
	Marc Noy
	\thanks{
		Departament de Matem\`atiques and Institut de Matem\`atiques (IMTech) de la Universitat Polit\`ecnica de Catalunya (UPC), and Centre de Recerca Matem\`atica (CRM), Barcelona, Spain.
		E-mail: {\tt marc.noy@upc.edu}.
		Supported by grants MTM2017-82166-P and PID2020-113082GB-I00, and the Severo Ochoa and Mar\'ia de Maeztu Program for Centers and Units of Excellence in R\&D (CEX2020-001084-M).
	}
	\and 	
	Cl\'ement Requil\'e	
	\thanks{
		Departament de Matem\`atiques and Institut de Matem\`atiques (IMTech) de la Universitat Polit\`ecnica de Catalunya (UPC), Barcelona, Spain.
		E-mail: {\tt clement.requile@upc.edu}.
		supported by the grant Beatriu de Pin\'os BP2019, funded by the H2020 COFUND project No 801370 and AGAUR (the Catalan agency for management of university and research grants), and the grant PID2020-113082GB-I00 of the Spanish ministry of Science and Innovation.
	}
	\and
	Juanjo Ru\'e
	\thanks{
		Departament de Matem\`atiques and Institut de Matem\`atiques (IMTech) de la Universitat Polit\`ecnica de Catalunya (UPC), and Centre de Recerca Matem\`atica (CRM), Barcelona, Spain.
		E-mail: {\tt juan.jose.rue@upc.edu}.
		Supported by grants MTM2017-82166-P and PID2020-113082GB-I00, and the Severo Ochoa and Mar\'ia de Maeztu Program for Centers and Units of Excellence in R\&D (CEX2020-001084-M).
	}
}
\begin{document}

\maketitle

\begin{abstract}
We analyse uniform random cubic rooted planar maps and obtain limiting distributions for several parameters of interest.
From the enumerative point of view, we present a unified approach for the enumeration of several classes of cubic planar maps, which allow us to recover known results in a more general and transparent way.
This approach allows us to obtain new enumerative results.

Concerning random maps, we first obtain the distribution of the degree of the root face, which has an exponential tail as for other classes of random maps. Our main result is a limiting map-Airy distribution law for the size of the largest block $L$, whose expectation is asymptotically $n/\sqrt{3}$ in a random cubic map with $n+2$ faces.
We prove analogous results for the size of the largest cubic block, obtained from $L$ by erasing all vertices of degree two, and for the size of the largest 3-connected component, whose expected values are respectively $n/2$ and $n/4$.
To obtain these results we need to analyse a new type of composition scheme which has not been treated by Banderier et al. [Random Structures Algorithms 2001].
\end{abstract}

%%%%%%%%%%%%%%%%%%%%%%%%%%%%%%%%%%%%%%%%%%%%%%%%%%%%%%%%%%%%%%%%%%%%
%
% Introduction
%
%%%%%%%%%%%%%%%%%%%%%%%%%%%%%%%%%%%%%%%%%%%%%%%%%%%%%%%%%%%%%%%%%%%%
\section{Introduction}\label{sec:intro}

The enumeration of planar maps has a long history, starting with the seminal papers of Tutte in the 1960s \cite{tutteP,tutteT,tutteM}.
Since then, the theory has been extended to maps on arbitrary surfaces and relevant connections have been found between map enumeration and other areas from physics, algebra and probability.

In this work we focus on \textit{cubic} (3-regular) planar maps.
All planar maps considered in this paper will be rooted at a directed edge ${uv}$, where $u$ will be called the \textit{root vertex} and by convention the outer face will be the one to the right of ${uv}$ and will be called the \textit{root face}.
There are numerous papers devoted to the enumeration of cubic maps, and most of them use the direct bijection with triangulations.
The first such result was the enumeration of 3-connected and $4$-connected triangulations by Tutte \cite{tutteT}.
He also counted 2-connected cubic maps \cite{tutteP}, but using a direct approach, while Mullin counted cubic maps using bijections with triangulations \cite{M67}.
More recently, Gao and Wormald \cite{GW02} enumerated simple cubic planar maps, as well as two other classes of cubic maps: simple 2-connected and simple 3-connected triangle-free.
Their proofs are again based on counting the associated triangulations.
The usual approach in the previous references is to consider the more general class of near-triangulations, that are maps in which all faces except possibly the root face have degree three.
Near-triangulations are counted according to the number of faces and the degree of the root face.
Using the quadratic method (see \cite{B65}, and \cite{BMJ06} for a far-reaching generalisation), one is usually able to find an expression for the associated generating function.

\medskip

Our approach is based instead on a direct decomposition of cubic maps, without going through the corresponding dual triangulations.
This approach was already used by Tutte \cite{tutteP} as mentionned above,
and extended by Bodirsky, Kang, L\"offler and McDiarmid \cite{BKLM07} in order to study random cubic planar graphs (see also \cite{NRR20}).
It avoids the quadratic method and, we believe, makes the combinatorial decompositions and the corresponding algebraic formulation more transparent.

As a significant example, it is mentioned by Gao and Wormald in \cite{GW02} that it would be very interesting to find an alternative approach to the enumeration of simple cubic maps.
We provide such an approach which is technically simple and extends the results in \cite{GW02}.
Furhtermore, we recover in a unified way the enumeration of both arbitrary and simple maps with given connectivity.
We are also able to count triangle-free cubic (both arbitrary and simple) maps, a problem considerably more challenging than counting triangle-free 3-connected cubic maps as done in \cite{GW02}.
Using a similar strategy, but technically more involved, the present authors have recently been able to enumerate simple 4-regular maps~\cite{NRR19,NRR21}.

\medskip

In the second part of the paper we obtain limiting distributions of several parameters in a uniform random cubic map.
First, we analyse the degree of the root face.
We show that the probability that the root face has degree $k$ for fixed $k\ge1$ tends to a constant $p_k > 0$.
We show that $\sum_{k\ge1} p_k = 1$ and obtain an explicit (although involved) expression for the probability generation function $p(u) = \sum_{k\ge1} p_k u^k$.
We also deduce the estimate $p_k \sim c\cdot k^{1/2} q^k$ as $k \to \infty$, for computable  constants $c > 0$ and $0 < q < 1$, which conforms to the universal behaviour for tail estimates of the root degree in maps observed in \cite{L99}, and argue why it is expected that the maximum degree is asymptotically $\log_{1/q} n$.

\medskip

Next we analyse the size of the largest \textit{block}, a block being a maximal 2-connected component, and of the largest 3-connected component in random cubic maps.
The first result of this kind was obtained by Bender, Richmond, and Wormald \cite{BRW95}, who showed that the largest 4-connected component in random 3-connected triangulations with $n$ edges has size asymptotically $n/2$.
This was later extended in \cite{GW99} to several types of components in classes of maps, where it was also shown that the size of the second largest component is $O(n^{2/3 + \epsilon})$.
The results from \cite{BRW95} and \cite{GW99} were revisited by Banderier, Flajolet, Schaeffer and Soria \cite{BFSS01}, who showed  that the size of the largest component in many families of random maps obey asymptotically a continuous law of the so-called \textit{map-Airy} type.
This is a particular stable law of index $3/2$, whose relevant properties are recalled in Section \ref{ssec:Airy}.
Let us mention that recently the sizes of blocks in random maps have been analysed using probabilistic tools~\cite{AB19}, reproving part of the results in \cite{BFSS01} and determining for the first time the distribution of the size of $k$-th largest block for~$k\ge2$.

We also study the size of the largest \textit{cubic} block, obtained from the largest block by erasing all vertices of degree two.
To obtain these results we cannot apply directly the techniques developed in \cite{BFSS01}, since the combinatorics of cubic maps differs from the classical families of maps considered so far: when in a cubic map an edge is replaced by a 2-connected map, a new edge in the corresponding block is created and has to be accounted for.

\medskip

We prove a limiting map-Airy distribution for the size of the largest block $L$, whose expectation is asymptotically $n/\sqrt{3}$, where $n$ is the number of edges, then show an analogous result for the size of the largest cubic block, obtained from $L$ by erasing all vertices of degree two, whose expected size is $n/2$.
Finally, we prove the corresponding result for the size of the largest 3-connected component, whose expectation is $n/4$.
It is somehow surprising to obtain these simple constants after a somewhat long analysis involving evaluations of bivariate Cauchy integrals.
We remark that a limiting map-Airy distribution for the size of the largest 3-connected component in random labelled cubic planar \emph{graphs} with $n$ vertices has been determined independently by Albenque, Fusy and Lehéricy \cite{AFL22} and Stufler \cite{S22}, with expected value $\alpha n$, where $\alpha \approx 0.8509$, a key step in proving the scaling limit of random cubic planar graphs.
In the concluding section (Section \ref{sec:conclusions}) we argue why a similar results holds for the size of the largest block in cubic planar graphs.

\medskip

To obtain our results we need to study novel combinatorial schemes, different from the classical scheme $C(wH(z))$ from \cite{BFSS01}, where $C(z)$ is the generating function of cores (defined in Section \ref{ssec:prelim_defs}), $w$ marks the size of the core, and $H(z)$ corresponds to the objects replaced inside the core.
There are several recent papers analysing general composition schemes that go beyond the work in \cite{BFSS01}.
For instance, Banderier, Kuba and Wallner \cite{BKW21} studied schemes of the form $C(wH(z))F(z)$, which generalise the schemes $C(wH(z))$ from \cite{BFSS01}, with applications to trees and lattice paths enumeration.
On the other hand, Stufler \cite{S18,S20,S22Gibbs} analyses so-called Gibbs partitions with applications to random graphs from block-stable classes of graphs.

In our paper we also make a contribution on this line of research: for the largest block the composition scheme is of the form
\begin{equation*}
	B\left( \frac{zw}{1 - zwL(z)} \right)\frac{zw}{1 - zwL(z)},
\end{equation*}
where $w$ marks the size of the 2-core (defined in Section \ref{ssec:prelim_defs}) and $B$ and $L$ are, respectively, the generating functions of 2-connected cubic maps and cubic maps rooted at a loop.
For the largest 3-connected component the scheme is
\begin{equation*}
	M\left( zw(1 + zwD(z)) \right)\frac{1 + 2zwD(z)}{1 + zwD(z)},
\end{equation*}
where now $w$ marks the size of the 3-core and $M$ and $D$ are, respectively, the generating functions of 3-connected cubic maps and cubic maps not rooted at an isthmus.

We are able to analyse these compositions schemes by combining different ingredients.
Our approach is to prove first a limiting map-Airy distribution for the size of the 2- and 3-cores, and then transfer it using a double-counting argument to the size of the largest 2- and 3-connected component.
For the largest block, the analysis is technically demanding as one has to consider simultaneously the size of the largest block and the number of vertices of degree two, combining the fluctuations of an Airy law with those of a Gaussian law.
A similar situation arises for the largest 3-connected component.

\subsection{Results on enumeration}

Our first result (Theorem \eqref{thm:enum}) rediscovers different results that have appeared over the years in the literature by using a unifying method: (a) and (b) were first derived by Mullin et al. \cite{M70} and by Tutte \cite{tutteP}, respectively, while (c) and (d) represent new proofs of Corollaries 3.2 and 4.2 from Gao and Wormald \cite{GW02}, respectively.
As a convention, if $a_n$ denotes a counting sequence of a class of cubic maps, $a^*_n$ denotes the corresponding one for \textit{simple} cubic maps.

\begin{theorem}[\cite{tutteP}, \cite{M70}, \cite{GW02}]\label{thm:enum}
	Let $c_n$ and $b_n$ be respectively the number of arbitrary and 2-connected cubic planar maps with $n+2$ faces.
	Then the following  estimates hold as $n\to\infty$:
	\begin{itemize}
		\item[\rm(a)] $c_n\sim \ds\frac{\sqrt{6}}{\sqrt{\pi}}  n^{-5/2} (12\sqrt{3})^n$,
			where $\ds\frac{\sqrt{6}}{\sqrt{\pi}} \approx 1.38198$ and $12\sqrt{3} \approx 20.78461$.
		\item[\rm(b)] $b_n\sim \ds\frac{\sqrt{3}}{4 \sqrt{\pi}} \cdot n^{-5/2} \left(\frac{27}{2}\right)^n$,
			where $\ds\frac{\sqrt{3}}{4 \sqrt{\pi}} \approx 0.24430$.
		\item[\rm(c)] $c^*_n\sim c \cdot n^{-5/2} \rho^{-n}$,
			where $c \approx 0.16559$ and $\rho^{-1}\approx 10.38845$, where $\rho$ is the smallest positive solution of
			\begin{equation}\label{eq:rho}
				P(z) = 27z^6 + 216z^5 + 171z^4 - 208z^3 - 339z^2 + 24z + 1 = 0.
			\end{equation}
		\item[\rm(d)] $b^*_n\sim b \cdot  n^{-5/2} (5 + 3\sqrt{3})^n$,
			where $b = \ds\frac{(3 + \sqrt{3})\sqrt{2}\sqrt{41\sqrt{3} - 71}}{4\sqrt{\pi}} \approx 0.11201$
			and $5 + 3\sqrt{3}\approx 10.19615$.
	\end{itemize}
\end{theorem}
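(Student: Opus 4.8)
The plan is to derive, for each of the four classes, an algebraic equation for the relevant generating function by a direct combinatorial decomposition of cubic maps, and then apply singularity analysis to extract the asymptotics. For part (a), I would start from the class of (arbitrary) rooted cubic planar maps and decompose a map at its root edge: either the root edge is an isthmus, splitting the map into two smaller rooted cubic maps, or it is not, in which case removing it and smoothing the two resulting degree-two vertices produces a smaller cubic map with some marked structure. This yields a polynomial equation $P(z, C(z)) = 0$; locating the dominant singularity $\rho_C = 1/(12\sqrt3)$ as a branch point where $\partial P/\partial C$ vanishes, checking that the singularity is of square-root type, and computing the singular expansion $C(z) = C_0 - C_1\sqrt{1 - z/\rho_C} + \cdots$ gives, via the transfer theorem, the stated $n^{-5/2}(12\sqrt3)^n$ growth with the explicit constant $\sqrt6/\sqrt\pi$. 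The same machinery handles (b): 2-connected cubic maps are decomposed through their 3-connected "core" by the standard substitution of 2-connected maps into the edges of a 3-connected one, or more directly by a series-parallel-type decomposition at the root, yielding an algebraic equation for $B(z)$ whose dominant singularity sits at $2/27$.

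For the simple classes (c) and (d), the key extra ingredient is an inclusion–exclusion (or a substitution) relating simple cubic maps to arbitrary ones: loops and multiple edges in a cubic map can only occur in very restricted local configurations (a loop at a vertex forces a pendant structure; a double edge is incident to exactly two vertices), so one can write the generating function of arbitrary cubic maps as an explicit rational substitution applied to that of simple cubic maps, and invert this relation. This converts the already-known equation for $C(z)$ into an algebraic equation for $C^*(z)$; its dominant singularity is the smallest positive root $\rho$ of the sextic $P(z)$ in \eqref{eq:rho}, and a square-root singular expansion there gives part (c). Part (d) proceeds in the same spirit from the equation for $B(z)$, with the dominant singularity now at $1/(5+3\sqrt3)$, i.e. $z = 5 - 3\sqrt3$ after rationalizing, and the constant $b$ emerging from the coefficient $C_1$ in the singular expansion.

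The main obstacle throughout is twofold. First, on the combinatorial side, one must set up the decompositions so that the bookkeeping of newly created edges and smoothed degree-two vertices is done consistently — this is precisely the subtlety the introduction flags (replacing an edge by a 2-connected map creates a new edge in the block), and getting the substitution variables exactly right is what makes the resulting algebraic equations correct. Second, on the analytic side, one must verify that in every case the dominant singularity is indeed of the square-root type (no earlier singularity from the substitution, and $\partial_C P \neq 0$ is genuinely the only obstruction, with $\partial_{CC}P \neq 0$ there), since only then does the $n^{-5/2}$ exponent with the displayed constants follow; for the simple classes this requires checking that $\rho$ and $1/(5+3\sqrt3)$ are smaller than the singularities coming from the rational substitution. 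Once these points are settled, the four asymptotic estimates follow by routine application of the transfer theorem of singularity analysis, with the explicit constants read off from the singular expansions.
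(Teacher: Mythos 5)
Your overall scheme (combinatorial decomposition $\to$ algebraic equation $\to$ singularity analysis $\to$ transfer theorem) is the right one, but there are concrete gaps. The most serious is analytic: you posit a generic square-root branch point with singular expansion $C(z)=C_0-C_1\sqrt{1-z/\rho_C}+\cdots$ and conditions $\partial_C P=0$, $\partial_{CC}P\neq 0$. A nonvanishing $\sqrt{1-z/\rho}$ term yields $n^{-3/2}\rho^{-n}$, not the stated $n^{-5/2}\rho^{-n}$. For all four classes the correct local behaviour is a $3/2$-singularity, $A_0-A_2(1-z/\rho)+A_3(1-z/\rho)^{3/2}+O((1-z/\rho)^2)$, i.e.\ the coefficient of the half-power $(1-z/\rho)^{1/2}$ vanishes and the leading singular term is $(1-z/\rho)^{3/2}$; only then does the transfer theorem give $\frac{3A_3}{4\sqrt{\pi}}n^{-5/2}\rho^{-n}$ (Lemma~\ref{lem:technic}). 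You must compute the Puiseux expansion from the minimal polynomial and verify this cancellation; the conditions you list would establish the wrong exponent.

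On the combinatorial side, two more points. First, your root-edge deletion for part (a) (``removing it and smoothing... produces a smaller cubic map with some marked structure'') does not close into a univariate equation: it is precisely the near-triangulation/quadratic-method route, which requires a catalytic variable for the root face degree. The paper instead uses the connectivity decomposition $\mathcal{C}=\mathcal{B}\cup\mathcal{L}\cup\mathcal{I}$ together with the series/parallel/polyhedral splitting of 2-connected maps, and -- crucially -- this system bottoms out at the generating function $M(z)=T(z)-z$ of 3-connected cubic maps, which is known only via Tutte's enumeration of simple triangulations \eqref{eq:Tu}; your proposal never supplies this base case. Second, for (c) and (d) you propose writing the arbitrary generating function as a rational substitution into the simple one and inverting; no such clean substitution is established (loops, the dumbbell, and series compositions of loop maps all create multiple edges in different ways). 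The paper instead rebuilds the decomposition system directly for simple maps, removing from each equation exactly the terms that create a loop or a double edge (system \eqref{sys:simple}); some version of that term-by-term bookkeeping is needed to make (c) and (d) rigorous.
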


\paragraph{Remark.}
We notice that two corrections are needed in \cite{GW02}.
In Corollary 3.2, the authors give the values $c\approx 0.0027278757$ in the estimate for $c^*_n$, and in Corollary 4.2 they give the value $b\approx 0.0019155063$ in the estimate for $b^*_n$.

\medskip

Our second result deals with the enumeration of triangle-free cubic maps.
By duality this amounts to count triangulations without vertices of degree three.
This was done in \cite{GW02} for 3-connected maps, but the analogous result for arbitrary maps is more demanding since we have to keep track of triangles in cubic maps.
Indeed they can be created or destroyed when maps are combined in series or in parallel.
To that end, we use decompositions that already proved useful in the graph setting (see \cite{NRR20}).

\begin{theorem}\label{thm:triangle-free}
	Let $f_n$ be the number of triangle-free cubic planar maps with $n+2$ faces.
	Then the following estimates hold as $n\to\infty$:
	\begin{itemize}
		\item[\rm(a)] $f_n\sim f \cdot n^{-5/2} \phi^{-n}$,
			where $f\approx 0.72142$ and $\phi^{-1}\approx 18.18695$,
			and $\phi$ is  a root of
		\begin{equation*}
		\begin{array}{ll}
			22161087866383368192z^{29} - 110805439331916840960z^{28} + 128349633892803674112z^{27} \\
			+ 306063926988657131520z^{26} - 1017316468360256421888z^{25} + 731390086938712080384z^{24} \\
			+ 1412989605840194371584z^{23} - 3904918887380696432640z^{22} + 3286085170959772286976z^{21} \\
			+ 3062041896395210752000z^{20} - 13636190761420628951040z^{19} + 22452065614202935443456z^{18} \\
			- 24015782846601940172800z^{17} + 18890731381294758887424z^{16} - 12618646835081595715584z^{15} \\
			+ 9454042977513918959616z^{14} - 8938299800000420075520z^{13} + 8330326495570886895360z^{12} \\
		 	- 6335783442775792180480z^{11} + 3739491505211342742768z^{10} - 1707114753190595308440z^9 \\
			+ 606877106680714207393z^8 - 169460055073349524800z^7 + 37432243036560849408z^6 \\
			- 6518789166080065536z^5 + 864781240587780096z^4 - 79062401625882624z^3 \\
			+ 3851046019399680z^2 - 14872398004224z - 3131031158784.
		\end{array}
		\end{equation*}

		\item[\rm(b)] $f^*_n\sim f^* \cdot n^{-5/2} (\phi^*)^{-n}$, where $f^*\approx 0.015166$ and $(\phi^*)^{-1}\approx 7.039997$, and $\phi^*$ is a root of
		\begin{equation*}
		\renewcommand{\arraystretch}{1.1}
		\begin{array}{ll}
			& 22161087866383368192z^{29} - 72023535565745946624z^{28} - 217455674688886800384z^{27} \\
			& + 1366192402856046231552z^{26} - 1408884772502960603136z^{25} - 5273526725499791867904z^{24} \\
			& + 18711657605588519485440z^{23} - 20661513660592621092864z^{22} - 15535239133496397004800z^{21} \\
			& + 90959874721137062576128z^{20} - 166070979940102503923712z^{19} + 193400402328142378696704z^{18} \\
			& - 162268637001045608759296z^{17} + 102897252166421987721216z^{16} - 51989933333416282030080z^{15} \\
			& + 24221605189030571544576z^{14} - 13520809952153729316864z^{13} + 9265021383768406435584z^{12} \\
			& - 6064247347538996966656z^{11} + 3267142329643563126000z^{10} - 1396980037043271835032z^9 \\
			& + 473034839943808953505z^8 - 127347508539288938304z^7 + 27332424367753886208z^6 \\
			& - 4657078534989938688z^5 + 614598596098523136z^4 - 58444903901822976z^3 \\
			& + 3329729331462144z^2 - 52444771909632z - 3131031158784.
		\end{array}
		\end{equation*}
	\end{itemize}
\end{theorem}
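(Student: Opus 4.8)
The plan is to derive both estimates from the connectivity decomposition already used in Theorem~\ref{thm:enum}: a cubic planar map is assembled from its $2$-connected blocks, and a $2$-connected cubic map is assembled from $3$-connected ones through a network (series--parallel) decomposition, where by planar duality a $3$-connected cubic map is a $3$-connected planar triangulation, enumerated by Tutte~\cite{tutteT}. The genuinely new feature, compared with Theorem~\ref{thm:enum}, is that the triangle-free condition is not local along this decomposition: a triangular face of the assembled map can be created or destroyed when networks are combined in series or in parallel, or when they are substituted into the edges of a $3$-connected core. The strategy is therefore to enumerate the $3$-connected building block with additional catalytic variables that record the degrees of its faces --- only up to the threshold at which one can decide whether a face becomes a triangle after substitution --- to carry this information through the entire assembly, and to impose triangle-freeness only at the very end; this is the map analogue of the bookkeeping used for triangle-free series--parallel operations on graphs in~\cite{NRR20}. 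For part (b) one additionally forbids faces of degrees one and two at every stage, so as to retain only simple maps (the simple $3$-connected triangle-free case itself being the one already treated in~\cite{GW02}).

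Concretely, I would first set up the network equations for $2$-connected cubic maps: a network is a single edge, a series composition of at least two non-series networks, a parallel composition of at least two non-parallel networks, or an ``$h$-network'' obtained from a $3$-connected cubic map by substituting a network into each non-root edge. Lifting each of these operations to generating functions carrying the face-degree catalytic variables makes the creation and destruction of triangular faces explicit: one tracks how the two faces bordering the root edge of a network evolve under series and parallel composition, and how each face of a $3$-connected core grows under substitution. Summing over the block structure then yields the generating function $F(z)$ of all triangle-free cubic maps, with $z$ marking the number of faces minus two, together with $F^{*}(z)$ in the simple case. Eliminating the catalytic variables --- by the kernel method at the series/parallel level and by resultants to incorporate the $3$-connected input --- produces a polynomial equation for $F(z)$; the minimal polynomial of its dominant singularity is precisely the degree-$29$ polynomial displayed in part (a), and similarly for $F^{*}(z)$ and $\phi^{*}$ in part (b).

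It then remains to locate the dominant singularity and to determine its type. As in the classical families of maps, I expect the singularity of $F$ not to be a square-root branch point created by the elimination, but to be inherited from the $3$-connected level through a \emph{critical} composition: the argument substituted into the $3$-connected generating function reaches the radius of convergence of the latter exactly at $z=\phi$, with nonvanishing derivative, so that the universal map singularity of the $3$-connected series --- with exponent $3/2$, by~\cite{tutteT,GW02} --- propagates to $F$. This gives a singular expansion of $F(z)$ in powers of $(1-z/\phi)^{1/2}$ whose first non-analytic term is a nonzero multiple of $(1-z/\phi)^{3/2}$, and the transfer theorem of singularity analysis then yields $f_{n}\sim f\,n^{-5/2}\phi^{-n}$; evaluating the explicit algebraic expressions at $z=\phi$ gives $f\approx 0.72142$ and $\phi^{-1}\approx 18.18695$, and part (b) follows in the same way from the simple inputs, producing $f^{*}$ and $\phi^{*}$.

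The main obstacle is the triangle bookkeeping: one has to design the catalytic variables so that the functional equations faithfully record every creation and destruction of a triangular face across all series, parallel, and substitution operations, and then carry out the elimination --- the degree-$29$ minimal polynomials, with their very large coefficients, are the visible cost of this step. A secondary difficulty is to certify that the relevant root of that polynomial really is the smallest positive singularity of $F$ (and of $F^{*}$) and that the composition is critical rather than subcritical --- a subcritical composition would give the exponent $-3/2$ instead of $-5/2$; once this is settled, checking that the coefficient of $(1-z/\phi)^{3/2}$ is nonzero and extracting the numerical constants $f$ and $f^{*}$ are heavy but routine computer-algebra computations.
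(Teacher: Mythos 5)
Your proposal follows essentially the same route as the paper: the same series/parallel/polyhedral (plus loop/isthmus) decomposition, the same idea of recording just enough local information --- whether the root edge lies on a triangle, and the triangular faces (dually, cubic vertices) of the $3$-connected core, as in \cite{NRR20} --- to detect creation and destruction of triangles under substitution, followed by elimination, location of the dominant singularity beyond that of all cubic maps, a $(1-z/\phi)^{3/2}$ singular expansion inherited from the critical composition with the $3$-connected level, and transfer. The only cosmetic difference is that the paper resolves the resulting positive polynomial system directly by resultants/Gr\"obner elimination rather than via a kernel-method step, and handles $K_4$ as a separate core case.
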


We can also obtain analogous results for 2-connected triangle-free cubic maps (both arbitrary and simple) but do not include them here to avoid repetition.
The asymptotic estimates are of the same kind and the  growth constants are, respectively, $\psi^{-1}\approx 11.49420$ and $(\psi^*)^{-1}\approx 7.01866$.
For completeness, the number of these maps for small values of $n$ are shown in Section \ref{sec:conclusions}.

\subsection{Results on random cubic maps}

Next, we study properties of the uniform random cubic map $\textsf{M}_n$ with $n+2$ faces as $n\to\infty$.
For some basic  additive parameters, namely the number of cut vertices and isthmuses, we can show convergence to a Gaussian law as $n\to\infty$.
Since the techniques are standard we only display the asymptotic value of the first moment in each case without giving the details:
\begin{equation*}
	\renewcommand{\arraystretch}{1.3}
	\begin{array}{ll}
		\hline
		\text{Parameter} &\quad  \text{Expectation} \\
		\hline
		\text{Cut vertices} &\quad \frac{3}{4} n = 0.75 n \\
		\text{Isthmuses}  &\quad 3\left( 1 - \frac{\sqrt{3}}{2}\right) n\approx 0.40192 \, n \\
		\hline
	\end{array}
\end{equation*}

One can compare these values with the corresponding ones in random cubic planar graphs \cite{NRR20}: for cut vertices the expectation is $\sim 0.00188n$ and for isthmuses it is $\sim 0.00094n$.
The intuition behind this discrepancy in the respective values is that in graphs loops are not allowed, whereas in maps they appear linearly often.
Let us remark that the number of cut vertices is a difficult parameter to analyse in general planar maps \cite{DNS20} but not in cubic maps, since in our case a cut vertex is necessarily incident to an isthmus.
Similarly to \cite{NRR20}, one could also show asymptotic normality for the number of blocks and triangles in $\textsf{M}_n$ but the details would be relatively long and we prefer to concentrate on more novel parameters.

\paragraph{The degree of the root face.}

Our first result is a discrete limit law for the degree of the root face.
Notice that this parameter does not make sense for cubic graphs, since there is no embedding and faces are not defined.
The asymptotic estimate for the tail of the distribution follows the usual form $c\cdot k^{1/2}q^k$ for the degree of the root face, or root vertex in random maps (see \cite{L99}), with $c>0$ and $0 < q < 1$.
More precisely, we have:

\begin{theorem}\label{thm:root_degree}
	For $k\ge 1$, the probability that the root face of $\textsf{M}_n$ has degree $k$ tends to a constant $p_k$ as $n\to\infty$.
	In addition, $\sum_{k\ge 1} p_k = 1$, and
	\begin{equation*}\label{eq:pgf}
		p(u) = \sum_{k\ge1} p_ku^k
	\end{equation*}
	satisfies the cubic equation
	\begin{align*}
		a_3(u)p(u)^3+a_1(u)p(u) +a_0(u)  = 0,
	\end{align*}
	where
	\begin{align*}
		& a_0(u) = 2(211\sqrt{3} - 534 )u^4(4u^6\sqrt{3} + u^7 - 6u^5\sqrt{3} - 9u^6 + 12u^5 - 24u^2\sqrt{3} + 60u\sqrt{3} + 36u^2 - 24\sqrt{3} - 90u + 36), \\
		& a_1(u) = 2(956\sqrt{3} - 1701)u^2(36u^9\sqrt{3} - 2u^{10} - 126u^8\sqrt{3} + 54u^9 + 126u^7\sqrt{3} - 81u^8 - 6u^6\sqrt{3} - 27u^7 - 60u^5\sqrt{3} \\
		& \qquad\qquad\qquad + 54u^6 - 648u^4 + 1944u^3 - 2160u^2 + 864u - 216), \\
		& a_3(u) = 9(12u^4\sqrt{3} + 23u^5 + 18u^3\sqrt{3} + 54u^4 + 24u^2\sqrt{3} + 81u^3 + 66u\sqrt{3} + 108u^2 + 24\sqrt{3} + 90u + 108)(4u^2\sqrt{3} \\
		& \qquad\qquad\qquad + 13u^3 - 26u\sqrt{3} - 36u^2 + 24\sqrt{3} + 78u - 60)^3.
	\end{align*}
	Moreover, the tail of the distribution is of the form
	\begin{equation*}\label{eq:tail}
		p_k \sim c \cdot k^{1/2} q^k, \quad\hbox{ as } k\to\infty,
	\end{equation*}
	where $c \approx 0.032328$, $q \approx 0.90699$ and $q^{-1}$ is the unique positive root of the equation
	\begin{equation*}
			13{u}^{3} + \left(4\sqrt{3} - 36\right){u}^{2} + \left(78 - 26\sqrt{3}\right)u + 24\sqrt{3} - 60.
	\end{equation*}
\end{theorem}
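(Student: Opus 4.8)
The plan is to track the degree of the root face through the combinatorial decomposition of cubic maps that underlies Theorem~\ref{thm:enum}, by introducing an extra variable $u$ marking the root-face degree and producing a bivariate generating function $C(z,u)$ for (rooted) cubic maps. Since cubic maps decompose along the root edge---either it is an isthmus, joining two rooted submaps, or it separates two faces and one peels off the root edge, modifying the root face---one obtains a functional equation for $C(z,u)$ in terms of the univariate series already under control (the generating functions $B$, $L$, and the core series appearing in the enumeration). Concretely, I would set up the decomposition so that $C(z,u)$ satisfies an algebraic equation whose coefficients are polynomials in $z$, $u$, and the relevant univariate series evaluated at the critical singularity $z=\rho_c = 1/(12\sqrt3)$.

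The second step is the singularity analysis in $z$ with $u$ fixed in a neighbourhood of $1$. The series $C(z,u)$ has its dominant singularity at $z=\rho_c(u)$; for $u$ near $1$ this is the same square-root type singularity as the univariate case (the map is ``subcritical'' in $u$ in the relevant range), so one extracts $[z^n]C(z,u) \sim \gamma(u)\, n^{-5/2}\, \rho_c(u)^{-n}$ by transfer theorems, and the probability generating function of the root-face degree of $\mathsf{M}_n$ converges to
\begin{equation*}
	p(u) = \frac{\gamma(u)\,\rho_c(u)^{-1}}{\gamma(1)\,\rho_c^{-1}} = \lim_{z\to\rho_c} \frac{C(z,u)-C(\rho_c,u)}{\text{(analogous normalisation at }u=1)},
\end{equation*}
more precisely the ratio of the singular coefficients. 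Because $C(z,u)$ is algebraic, $p(u)$ is algebraic, and eliminating the auxiliary univariate series (which at $z=\rho_c$ take explicit algebraic values involving $\sqrt3$) from the defining system via resultants yields the stated cubic $a_3(u)p(u)^3 + a_1(u)p(u) + a_0(u)=0$; the absence of a $p(u)^2$ term is a feature one reads off after the elimination. To check $\sum_k p_k = p(1) = 1$ one simply verifies that $u=1$ solves $a_3(1) + a_1(1) + a_0(1) = 0$, which is a finite arithmetic identity in $\sqrt3$. Positivity of the $p_k$ is automatic since they are limits of probabilities, but can also be seen from the fact that $p(u)$ has nonnegative power-series coefficients as the normalised limit of counting series.

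For the tail estimate, the third step is to locate the dominant singularity of $p(u)$ in the variable $u$. From the cubic equation, the singularities of $p(u)$ occur where the discriminant in $p$ vanishes or where $a_3(u)=0$; the relevant one is the smallest positive $u^\star = q^{-1}>1$ at which the branch $p(u)$ picks up a square-root singularity, and it should coincide with a root of the cubic factor $13u^3 + (4\sqrt3-36)u^2 + (78-26\sqrt3)u + 24\sqrt3-60$ (which appears cubed inside $a_3(u)$). One verifies this cubic has a unique positive root, checks it lies inside the disk of convergence of the other series so that it is genuinely the dominant singularity, identifies the local behaviour $p(u) = p(u^\star) - c'\sqrt{1 - u/u^\star} + \cdots$ by plugging a Puiseux ansatz into the cubic, and applies singularity analysis in $u$ to get $p_k \sim c\, k^{-3/2}(u^\star)^k$. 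I would then double-check the exponent: the statement claims $p_k \sim c\,k^{1/2}q^k$, a $k^{1/2}$ rather than $k^{-3/2}$ growth, which means the dominant local behaviour must instead be of the form $p(u) = A + B(1-u/u^\star)^{-3/2}+\cdots$ or, more likely, that $p(u)$ itself diverges at $u^\star$ like $(1-u/u^\star)^{-1/2}$ while its derivative governs the tail---so the precise Puiseux exponent at $u^\star$ needs care, and this is where I expect the main obstacle: correctly diagnosing which branch of the cubic is the combinatorial one near $u^\star$, its exact singular exponent, and the constant $c$ (obtained from the leading Puiseux coefficient via the standard $\Gamma$-function transfer), while confirming numerically that $q\approx0.90699$ and $c\approx0.032328$ match. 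The rest---the elimination producing $a_0,a_1,a_3$ and the verification $p(1)=1$---is lengthy but mechanical symbolic computation.
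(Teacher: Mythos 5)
Your overall strategy is the paper's: introduce $u$ marking the root-face degree into the series/parallel/polyhedral decomposition to get an algebraic system for $C(z,u)$, do singularity analysis in $z$ uniformly for $u$ near $1$, define $p(u)$ as the ratio of singular coefficients, eliminate to get the cubic, and then do singularity analysis of $p(u)$ in $u$ for the tail. However, two steps that you treat as routine are exactly where the real work lies, and one of them you leave genuinely unresolved.

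First, the claim that ``for $u$ near $1$ this is the same square-root type singularity as the univariate case'' is the main technical content of the proof, not a remark. One must show that for $|u|\le 1$ near the real axis the dominant singularity of $C(z,u)$ in $z$ \emph{does not move with $u$} and remains at $\sigma=\sqrt{3}/36$; this requires ruling out both a new singularity from the explicit square-root term $\sqrt{1-4zu^5(1+D)}$ appearing in the functional equation and a branch point of that equation (a vanishing $D$-derivative), which the paper does by explicit bounds on $D(\sigma)$, $I(\sigma)$, $\partial_u M(\tau,1)$, etc. Your formula $p(u)=\gamma(u)\rho_c(u)^{-1}/(\gamma(1)\rho_c^{-1})$ even allows $\rho_c(u)$ to vary, in which case the ratio $[z^n][u^k]C(z,u)/[z^n]C(z)$ would contain a factor $(\rho_c/\rho_c(u))^n$ and the limit defining $p_k$ and $p(u)$ would not exist as stated; the correct conclusion is $p(u)=C_3(u)/C_3(1)$ with a $u$-independent singularity, from which $p(1)=1$ is immediate (your check that $u=1$, $p=1$ satisfies the cubic only shows $1$ is \emph{a} root, not that the combinatorial branch equals it).

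Second, the tail. You correctly locate $u_0=q^{-1}$ as the relevant root of the discriminant (the cubic factor $13u^3+(4\sqrt3-36)u^2+(78-26\sqrt3)u+24\sqrt3-60$), but you do not determine the Puiseux exponent there, and your ``more likely'' guess that $p(u)$ diverges like $(1-u/u_0)^{-1/2}$ is wrong: that would give $p_k\sim c\,k^{-1/2}q^k$. The actual local behaviour is $p(u)=a\,(1-u/u_0)^{-3/2}+O\bigl((1-u/u_0)^{-1/2}\bigr)$, i.e.\ the first of the alternatives you list, and the transfer theorem applied to the exponent $-3/2$ is what produces $p_k\sim (a/\Gamma(3/2))\,k^{1/2}q^k$. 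Since you explicitly flag this determination as the main obstacle and do not resolve it, the tail estimate is not established by your argument; it must be read off from the Newton--Puiseux expansion of the combinatorial branch of $Q(p,u)=0$ at $u=u_0$ (where $a_3(u_0)=0$, forcing the blow-up of $p$).
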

We show below a table containing the first values of $p_k$:
\begin{equation*}
	\renewcommand{\arraystretch}{2.2}
	\begin{array}{c|ccccccc}
		k & 1 & 2 & 3 & 4  & 5 & 6& 7 \\
		\hline
		p_k	& \ds\frac{\sqrt{3}}{36} & \ds\frac{\sqrt{3}}{36} & \ds\frac{\sqrt{3}}{36} & \ds\frac{6\sqrt{3} - 1}{216}
		& \ds\frac{25\sqrt{3}}{864} & \ds\frac{\sqrt{3}}{36} & \ds\frac{35\sqrt{3}}{1296} \\
	\end{array}
\end{equation*}

\paragraph{Largest components.}

Our first result on this topic is a limit law of the map-Airy type for the size of the largest block of $\textsf{M}_n$, whose proof is an adaptation of the method developed in \cite{BFSS01}.
More precisely, if $\A(x)$ is the density function of the Airy distribution (see Subsection \ref{ssec:Airy} for a precise definition) then we have

\begin{theorem}\label{thm:largest_graph_block}
	Let $X_n$ denote the size of the largest block in $\textsf{M}_n$.
	Then, uniformly for $q$ in a bounded interval, we have as $n\to\infty$
	\begin{align*}
		n^{2/3}\PP\left( X_n = \lfloor n/\sqrt{3} + qn^{2/3}\rfloor \right) = c\mathcal{A}(cq)(1 + o(1)),
	\end{align*}
	where $c = 2\sqrt{3}/(1 - 1/\sqrt{3})^{4/3}\approx 10.9215218947$.
\end{theorem}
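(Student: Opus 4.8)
The plan is to extract the limit law from the singularity structure of the composition scheme
\[
	G(z,w) \;=\; B\!\left( \frac{zw}{1 - zwL(z)} \right)\frac{zw}{1 - zwL(z)},
\]
where $w$ marks the size of the 2-core. First I would recall from Theorem~\ref{thm:enum}(a)--(b) that the generating function $C(z)$ of cubic maps has a square-root singularity at $\rho_c = 1/(12\sqrt3)$, and that $B(z)$ has its dominant singularity at $\rho_B = 2/27$. The key structural fact to establish is that at $w=1$ the argument $H(z) := z/(1 - zL(z))$ of $B$ reaches the value $\rho_B$ \emph{before} $z$ reaches $\rho_c$; equivalently, the composition is \emph{subcritical} in the sense of \cite{BFSS01}, so that the large 2-core regime is governed by the singularity of the inner function $H$ being pushed to the boundary $\rho_B$ by the catalytic variable $w$, while $L(z)$ (and the factor outside $B$) remains analytic. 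Concretely, one writes $z_w$ for the solution of $z w/(1 - z w L(z)) = \rho_B$ and studies $G(z,w)$ near $z = z_w$; since $B$ has a $3/2$-type singularity (the universal exponent for families of 2-connected maps), $G(z_w(1-t),w)$ behaves like a constant minus a term of order $t^{3/2}$, plus a smooth part, with coefficients depending analytically on $w$ near $w=1$.

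The second step is the standard Banderier--Flajolet--Schaeffer--Soria saddle-point / Airy transfer. Writing $\rho(w) = z_w$, the bivariate singularity analysis of $G(z,w)$ produces, via a double Cauchy integral
\[
	[z^n] G(z,w) \;=\; \frac{1}{2\pi i}\oint G(z,w)\,\frac{dz}{z^{n+1}},
\]
an asymptotic expansion in which the \emph{size of the 2-core} $Y_n$ satisfies $\mathbf{E}[w^{Y_n}] \approx (\rho(1)/\rho(w))^n \cdot (\text{smooth})$; the location of the singularity $\rho(w)$ as a function of $w$, together with the $3/2$ exponent, yields after the substitution $w = e^{s/n^{1/3}}$ a limiting map-Airy law for $(Y_n - \mu n)/n^{2/3}$. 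Here $\mu$ and the scaling constant come from $\rho'(1)/\rho(1)$ and $\rho''(1)$ evaluated through the defining equation $\rho(w) w/(1 - \rho(w) w L(\rho(w))) = \rho_B$, differentiated implicitly; these are the "evaluations of bivariate Cauchy integrals" mentioned in the introduction. This gives a map-Airy law for the 2-core with some explicit mean $\mu_0 n$.

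The third and genuinely novel step is the transfer from the 2-core size to the size of the largest \emph{block} $X_n$. A uniformly random cubic map with a 2-core of size $m$ is obtained by taking a 2-connected cubic map of size $m$ and substituting, into each of its edges, an ordered sequence built from $L$-objects (maps rooted at a loop) glued in series — this is exactly why the inner function is $zw/(1 - zwL(z))$ rather than $zwH(z)$: a substituted chain of length $\ell$ contributes $\ell$ to the core size but the edges it spans collapse. The point is that the largest block of the whole map is, with high probability, the unique giant block sitting inside the 2-core, and the discrepancy between $|{\rm 2\text{-}core}|$ and $|{\rm largest\ block}|$ is governed by an \emph{additive} functional over the core whose fluctuations are Gaussian of order $n^{1/2} \ll n^{2/3}$; by a double-counting / conditioning argument (condition on the core, count the ways the rest of the map hangs off it) one shows $X_n = \mu_0 n + O_{\mathbb P}(n^{1/2}) + (\text{shift})$, and a direct computation of the shift in the mean turns $\mu_0$ into $1/\sqrt3$ and produces the constant $c = 2\sqrt3/(1-1/\sqrt3)^{4/3}$. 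The $n^{2/3}$-scale fluctuations of $X_n$ therefore coincide with those of the core, giving the claimed identity
\[
	n^{2/3}\,\mathbf{P}\!\left( X_n = \lfloor n/\sqrt3 + q n^{2/3}\rfloor \right) = c\,\mathcal{A}(cq)(1+o(1)),
\]
uniformly for $q$ in a bounded interval, the uniformity coming as usual from uniform control of the saddle point in the Airy transfer. The main obstacle I anticipate is \emph{not} the Airy transfer itself, which is by now routine, but rather (i) verifying that the scheme is genuinely subcritical — i.e. that $\rho(1) < \rho_c$, so that $L(z)$ and the external factor are analytic and do not contribute a competing singularity — and (ii) controlling the core-to-block discrepancy precisely enough to pin down both the leading constant $1/\sqrt3$ and the scaling constant $c$, since this requires simultaneously tracking the Airy fluctuations of the core and the Gaussian fluctuations of the number of degree-two vertices erased, exactly the "combining an Airy law with a Gaussian law" situation flagged in the introduction.
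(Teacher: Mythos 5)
Your overall architecture matches the paper's: set up the composition scheme for the 2-core, prove a map-Airy law for the core size, then transfer it to the largest block via the double-counting argument of \cite{BFSS01}. But the central analytic step is described with the wrong mechanism, and this is a genuine gap. The scheme is \emph{critical}, not subcritical: counting by edges, $L(z)$ is singular at $\rho=2^{1/3}\sqrt3/6$, $B(y)$ at $\tau=2^{1/3}/3$, and one has exactly $\rho/(1-\rho L(\rho))=\tau$, i.e.\ the inner argument of $B$ reaches $\tau$ precisely when $z$ reaches the singularity of $L$. If, as you posit, the equation $z_w w/(1-z_w wL(z_w))=\tau$ had a solution at which $L$ is still analytic, then $\rho(w)$ would be analytic in $w$, $\ex[w^{Y_n}]\approx(\rho(1)/\rho(w))^n$ would be a quasi-power, and the core size would have \emph{Gaussian} fluctuations of order $n^{1/2}$ --- no map-Airy law at all. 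The Airy behaviour comes from the coalescence of the two singularities: one first extracts $[w^t]$ using the $3/2$-singularity of $B$, obtaining (up to smooth factors) $\bigl(\tau^{-1}+L(z)\bigr)^t t^{-5/2}$, and then evaluates $[z^{n-t}]$ of this by a Cauchy integral on a contour at distance $\Theta(n^{-2/3})$ from $\rho$; there the coefficient $L_3$ of the $Z^3$ term in the singular expansion of $L$ produces the $\exp(d_2 s^{3/2})$ of the Airy integral representation, and the value $\alpha_0=(\tau^{-1}+L_0)/(\tau^{-1}+L_0+L_2)=1/\sqrt3$ is precisely the one that kills the term linear in $sn^{1/3}$ in the exponent. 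Implicit differentiation of $\rho(w)$ at $w=1$ cannot produce any of this, because $\rho(w)$ is not analytic there.

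Your third step also imports machinery that is not needed for this particular theorem and misplaces the source of the constant. For the largest \emph{block}, the size is exactly the number of edges of the 2-core whenever the 2-core is the largest block; the auxiliary variable $u$ marking degree-two vertices is set to $1$, and there is no Gaussian ``shift'' of the mean from erased vertices --- that refinement is needed only for the largest \emph{cubic} block (Theorem \ref{thm:largest_cubic_block}) and the 3-connected component. Here the transfer is purely the factor $n/t$ from rooting at a second edge, with an exponentially small $O(A^n)$ correction for the event that the 2-core is not the largest block, and both $\alpha_0=1/\sqrt3$ and $c=2\sqrt3/(1-1/\sqrt3)^{4/3}$ come directly from the criticality condition together with the constants $B_3$, $C_3$, $L_0$, $L_2$, $L_3$, via the identity $(B_3/C_3)(1+\tau L_0)^{5/2}\alpha_0^{-5/2}=1$.
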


Notice that a block of a cubic planar map $M$ can have vertices of degree two, hence it is not cubic in general.
This motivates us to define the cubic block associated to a block $B$ of $M$.
It is the 2-connected cubic map obtained from $B$ as follows: for each vertex $v$ of degree two of $B$, with neighbours $a$ and $b$, contract the edge $av$.
Remark that at the end of the process, every vertex of degree two of $B$ has been deleted and we end up with a cubic map (except when $B$ is a cycle).
Our next result is an analogous limit law of the map-Airy type for the size of the largest cubic block of $\textsf{M}_n$. As will be seen in the proofs, the process of counting the removed  vertices of degree two constrains demands a much more refined analysis for cubic blocks than for ordinary blocks.

\begin{theorem}\label{thm:largest_cubic_block}
	Let $X_n^*$ denote the size of the largest cubic block in $\textsf{M}_n$.
	Then, uniformly for $q$ in a bounded interval, we have as $n\to\infty$
	\begin{align*}
		n^{2/3}\PP\left( X_n^* = \lfloor n/2 + qn^{2/3}\rfloor \right) = c^*\mathcal{A}(c^*q)(1 + o(1)),
	\end{align*}
	where $c^* = 4/(1 - 1/\sqrt{3})^{4/3}\approx 12.6110872117$.
\end{theorem}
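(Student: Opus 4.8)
The plan is to mirror the proof of Theorem~\ref{thm:largest_graph_block}, adding the bookkeeping needed to follow the degree-two vertices that disappear when a block is replaced by its cubic block. Recall from Theorem~\ref{thm:enum} that the generating functions $C$ and $B$ of cubic maps and of $2$-connected cubic maps both have a singularity of type $(1-z/\rho)^{3/2}$, at $\rho_C=1/(12\sqrt3)$ and $\rho_B=2/27$ respectively, and that the inner series occurring in the block scheme reaches $\rho_B$ exactly at $z=\rho_C$, so the composition is critical. The key structural observation is that the cubic block $\beta^\sharp$ of a block $\beta$ of $\textsf{M}_n$ is obtained by smoothing out the degree-two vertices of $\beta$, so its size equals $\mathrm{size}(\beta)-d(\beta)$, where $d(\beta)$ is the number of such vertices; consequently I will need the \emph{joint} behaviour of the size of the $2$-core and of its number of degree-two vertices, and Theorem~\ref{thm:largest_graph_block}, which controls only the first of these, does not suffice by itself.

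First I would set up the appropriate composition scheme. Decomposing a rooted cubic map through its $2$-core, the $2$-connected cubic map $\beta^\sharp$ underlying the block containing the root, each of whose edges is inflated by a sequence of loop-rooted cubic maps exactly as in the scheme $B\!\left(zw/(1-zwL(z))\right)zw/(1-zwL(z))$ of the Introduction, but now recording the edges of $\beta^\sharp$ with a variable $u$ and the inserted appendages, equivalently the created degree-two vertices, with a variable $v$, one obtains an identity of the form
\[
  C(z,u,v)=\Psi\!\left(z,u,v,\;B\!\left(\frac{zu}{1-zvL(z)}\right)\right),
\]
with $\Psi$ explicit and elementary, in which $[z^nu^kv^j]C(z,u,v)$ counts rooted cubic maps with $n+2$ faces whose $2$-core has cubic block of size $k$ and $j$ degree-two vertices. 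This is the new type of composition scheme referred to in the Introduction: because replacing an edge of the core by a $2$-connected map creates a fresh core edge, the exponents of $u$ and $v$ are coupled through the inner function in a way not covered by the schemes $C(wH(z))$ of~\cite{BFSS01} or $C(wH(z))F(z)$ of~\cite{BKW21}.

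Next I would run the singularity analysis. Since $B$ carries a $3/2$-singularity and the inner function $zu/(1-zvL(z))$ hits $\rho_B$ critically at $(z,u,v)=(\rho_C,1,1)$, a bivariate saddle-point argument on $C(z,u,1)$, carried out as in the proof of Theorem~\ref{thm:largest_graph_block}, should give a map-Airy law for the cubic-block size of the $2$-core with mean $\mu n$ and scale $c^*$, the constants being read off from $\rho_C$, $\rho_B$ and the first two derivatives of the inner function at $\rho_C$; a direct computation using the algebraic data behind Theorem~\ref{thm:enum} ought to yield $\mu=1/2$ and $c^*=4/(1-1/\sqrt3)^{4/3}$. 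Equivalently, keeping track of $v$ I expect that, conditionally on the $2$-core having $m$ edges, its number of degree-two vertices is asymptotically Gaussian with mean $(1-\sqrt3/2)m$ and standard deviation $\Theta(\sqrt m)$; subtracting this $\Theta(\sqrt n)=o(n^{2/3})$ Gaussian from the map-Airy law of Theorem~\ref{thm:largest_graph_block} and using the resulting affine relation $\mathrm{size}(\beta^\sharp)\approx(\sqrt3/2)\,\mathrm{size}(\beta)$ rescales that law by $\sqrt3/2$, turning $c=2\sqrt3/(1-1/\sqrt3)^{4/3}$ into $c^*=2c/\sqrt3$ and $n/\sqrt3$ into $n/2$. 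Finally, I would transfer the statement from the $2$-core to the largest cubic block by the double-counting and symmetrisation argument of~\cite{BFSS01}: re-rooting makes every block equally likely to contain the root, the subexponential behaviour of $B$ forces with probability $1-o(1)$ a unique block of size $\Omega(n)$ with all other blocks of size $O(n^{2/3+\varepsilon})$, and since smoothing out degree-two vertices is monotone in size, the largest cubic block agrees up to $o(n^{2/3})$ with the cubic block of the $2$-core, which gives the claimed limit with density $c^*\mathcal{A}(c^*q)$.

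The hard part will be the joint local limit theorem underlying the previous paragraph: one has to control a two-dimensional Cauchy integral in which the square-root branch point of $B$, responsible for the $n^{2/3}$ map-Airy fluctuations of the core size, interacts with a Gaussian saddle arising from the $\Theta(n)$ essentially independent appendage-chains, hence from the number of degree-two vertices at scale $n^{1/2}$; along the way one must check that the inner function stays critical in all three variables and that the $3/2$-singularity of $B$ is genuinely transmitted and not killed by the elementary prefactor $\Psi$. Extracting from this the clean constants $n/2$ and $c^*=4/(1-1/\sqrt3)^{4/3}$ is where essentially all of the work lies; once the uniformity in that step is in place, the transfer to the largest cubic block is routine.
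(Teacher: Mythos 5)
Your proposal follows essentially the same route as the paper's proof: the trivariate scheme tracking the $2$-core size jointly with its number of degree-two vertices, the map-Airy law for the core size combined with a Gaussian local limit for the degree-two count at scale $n^{1/2}$ with mean fraction $\beta_0=1-\sqrt{3}/2$, the identity $\mathrm{size}(\beta^\sharp)=\mathrm{size}(\beta)-d(\beta)$ yielding $\alpha_0(1-\beta_0)=1/2$ and $c^*=c/(1-\beta_0)=2c/\sqrt{3}$, and the transfer to the largest block via a double-counting lemma refined to track the pair $(t,m)$ jointly. The one caveat is that this identity only holds when size is measured in edges (contracting an edge preserves the number of faces), so the face-counting singularities you quote must be replaced by their edge-counting counterparts, exactly as the paper does throughout Section~\ref{sec:components}.
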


Finally, we obtain a limit law of the map-Airy type for the size of the largest 3-connected component. Due to the differences between the decomposition of cubic maps into their 3-connected components compared with other families of maps (see the discussion in Section \ref{sec:prelim}), the method from \cite{BFSS01} does not directly apply to this case. But since  3-connected components are always cubic, the scheme developed for the size the largest cubic block can also be used here.

\begin{theorem}\label{thm:largest_3comp}
	Let $Z_n$ denote the size of the largest 3-connected component in $\textsf{M}_n$.
	Then, uniformly for $q$ in a bounded interval, we have as $n\to\infty$
	\begin{align*}
		n^{2/3}\PP\left( Z_n = \lfloor n/4 + qn^{2/3}\rfloor \right) = c'\mathcal{A}(c'q)(1 + o(1)),
	\end{align*}
	where $c' = 72(3/2 - 1/\sqrt{3})^{-4/3}\approx 27.1635288451$.
\end{theorem}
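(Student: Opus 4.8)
\textbf{Proof proposal for Theorem~\ref{thm:largest_3comp}.}
The plan is to mimic the strategy used for Theorems~\ref{thm:largest_graph_block} and~\ref{thm:largest_cubic_block}: first establish a map-Airy limit law for the size of the \emph{3-core} (the unique maximal 3-connected component that survives the standard decomposition, marked by the variable $w$), and then transfer it to the size of the largest 3-connected \emph{component} via a double-counting argument. The starting point is the composition scheme recorded in the introduction,
\begin{equation*}
	M\!\left( zw\bigl(1 + zwD(z)\bigr) \right)\frac{1 + 2zwD(z)}{1 + zwD(z)},
\end{equation*}
where $M$ is the generating function of 3-connected cubic maps and $D$ counts cubic maps not rooted at an isthmus. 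Writing $H(z,w) = zw(1 + zwD(z))$ for the inner function and $F(z,w)$ for the prefactor, we are in the situation of a scheme $M(H(z,w))F(z,w)$ where $M$ has a square-root singularity (3-connected cubic maps, being in bijection with 3-connected triangulations, have the critical exponent $-5/2$, hence their generating function is of the $(1-u/\rho_M)^{3/2}$ type). The first step is to locate the dominant singularity $\rho$ of the cubic-map generating function $C(z)$ and to check which of the two standard regimes of \cite{BFSS01} we are in; since the claimed expected 3-core size is linear ($n/4$), we expect the \emph{critical/supercritical} composition regime in which $H(\rho,1)$ hits the singularity $\rho_M$ of $M$ exactly, so that the singularity of the whole scheme is still at $\rho$ but inherits a map-Airy fluctuation. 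Concretely, one sets $u = H(z,w)$, Taylor-expands $M(u) = M(\rho_M) + M_1(\rho_M - u) + M_2(\rho_M-u)^{3/2} + \cdots$, plugs in the singular expansion of $z\mapsto C(z)$ near $\rho$ (of the form $C(z) = C_0 + C_2(1-z/\rho) + C_3(1-z/\rho)^{3/2}+\cdots$, which also governs $D(z)$ through the relations among the GFs established in the enumeration part), and extracts the coefficient asymptotics of $[z^n]$ of the scheme evaluated near $w=1$ by a saddle-point / Cauchy-integral analysis in the $w$ variable. This yields
\begin{equation*}
	n^{2/3}\,\PP\bigl(\text{3-core of }\textsf{M}_n \text{ has size } \lfloor \mu n + q n^{2/3}\rfloor\bigr) \;=\; c_1\mathcal{A}(c_1 q)(1+o(1))
\end{equation*}
for an explicit mean constant $\mu$ and scale constant $c_1$, exactly as in the proof of Theorem~\ref{thm:largest_cubic_block}; the computation of $\mu$ amounts to $\mu = \rho\,\partial_w H / H$ evaluated at the critical point, and $c_1$ is a ratio of the leading Taylor coefficients of $M$ and of the inner function, raised to the $\tfrac13$ and $\tfrac23$ powers respectively.

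The second step is the transfer from the 3-core to the largest 3-connected component. The point is that the 3-core is a single distinguished 3-connected component, but the largest 3-connected component of a random cubic map might a priori live somewhere else in the 3-connected decomposition tree. The double-counting argument — the same one used to pass from the 2-core to the largest block — shows that with overwhelming probability the largest 3-connected component \emph{is} the 3-core: one bounds the expected number of cubic maps of size $n$ that possess two disjoint 3-connected components each of size $\ge \tfrac12\mu n$, and shows this is exponentially negligible compared to $c_n$. This uses the subcriticality of the ``rest'' of the decomposition once one 3-connected component is forced to be large: replacing an edge of a 3-connected map by a cubic map contributes a factor whose generating function is evaluated strictly inside its disk of convergence, so each additional large block costs an exponential factor. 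Equivalently, one argues that the second-largest 3-connected component has size $O(n^{2/3+\varepsilon})$ (in the spirit of \cite{GW99}), which forces the largest one to coincide with the core up to a negligible-probability event, and hence $Z_n$ and the 3-core size have the same limiting distribution.

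The third step is simply to identify the constants. From $\mu = 1/4$ and the explicit singular data one obtains $c' = 72(3/2 - 1/\sqrt{3})^{-4/3}$; the factor $3/2 - 1/\sqrt{3}$ and the exponent $-4/3$ should fall out of the same bookkeeping that produced $c = 2\sqrt{3}(1-1/\sqrt3)^{-4/3}$ and $c^* = 4(1-1/\sqrt3)^{-4/3}$ in the two previous theorems, the difference being the combinatorial correction (the prefactor $\tfrac{1+2zwD}{1+zwD}$ and the shape of the inner function $zw(1+zwD)$) specific to the 3-connected decomposition of cubic maps. Since 3-connected components are always cubic, no extra Gaussian fluctuation from degree-two vertices enters here, so this case is technically lighter than Theorem~\ref{thm:largest_cubic_block}; one can reuse the scheme developed there almost verbatim.

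\textbf{Main obstacle.} The delicate part is the transfer step, i.e.\ proving rigorously that the 3-core really is the largest 3-connected component with probability $1 - o(1)$ at the relevant scale $n^{2/3}$. The decomposition of cubic maps into 3-connected components is \emph{not} the classical one (an edge substituted into a 3-connected map creates a new edge in that component, which must be tracked — this is precisely the wrinkle flagged in the introduction), so the standard estimates of \cite{BFSS01,GW99} do not apply off the shelf; one has to redo the large-deviation / second-moment bound on the number of maps with two competing large 3-connected components using the corrected substitution rules, and verify that the exponential gap survives. A secondary technical nuisance is checking that the composition is genuinely in the critical regime (that $H(\rho,1) = \rho_M$ exactly, with the right non-degeneracy of second derivatives), which requires the explicit algebraic relations among $C$, $B$, $M$, $D$, $L$ obtained in the enumerative part of the paper; any degeneracy there would change the exponent and the shape of the limit law.
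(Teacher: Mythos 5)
Your overall architecture (Airy law for the core of the 3-connected decomposition, then a double-counting transfer to the largest component) is the same as the paper's, and you correctly identify the composition scheme $M\bigl(zw(1+zwD(z))\bigr)\tfrac{1+2zwD(z)}{1+zwD(z)}$ as the starting point. But there is a genuine gap, and it sits exactly where you declare the problem solved: you assert that ``since 3-connected components are always cubic, no extra Gaussian fluctuation from degree-two vertices enters here, so this case is technically lighter than Theorem~\ref{thm:largest_cubic_block}.'' This is false, and it contradicts the wrinkle you yourself flag in your ``main obstacle'' paragraph. In the scheme, $w$ marks the edges of the \emph{near} 3-core: each substitution of an edge of the 3-connected core by a map in $\mathcal{D}$ subdivides that edge into a path of length two, so the quantity whose distribution the two-variable $(z,w)$ analysis controls is $t = (\text{size of the 3-connected component}) + m$, where $m$ is the number of substituted edges (equivalently, degree-two vertices of the near 3-core). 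Carrying out your Step 1 as written would produce an Airy law for $t$ centered at $\alpha_0 n$ with $\alpha_0 = \tfrac12 - \tfrac{\sqrt3}{9} \approx 0.3075$ --- not $n/4$. To reach $Z_n$ one must, exactly as in Theorem~\ref{thm:largest_cubic_block}, introduce a third variable $u$ marking $m$ in the scheme (i.e.\ work with $M\bigl(zw(1+zw\,u\,D(z))\bigr)\tfrac{1+2zwuD(z)}{1+zwuD(z)}$), prove a local Gaussian concentration $m = \beta_0 t + O(t^{1/2+\varepsilon})$ with $\beta_0 = \tfrac{19}{46} - \tfrac{3\sqrt3}{23}$, and combine the two fluctuations; the claimed constants arise only through this step, via $\tfrac14 = \alpha_0(1-\beta_0)$ and $c' = c/(1-\beta_0)$. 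Dropping the $u$-variable does not merely lose precision --- it yields the wrong centering and the wrong scale.

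A secondary but substantive issue is your description of the transfer step. The double-counting lemma does \emph{not} show that the 3-core is the largest 3-connected component with probability $1-o(1)$; the core lies in the central regime only with probability tending to $\alpha_0 < 1$. What the lemma gives is the exact-to-exponential-error identity $c^*_{n,t,m} = \tfrac{n}{t}\,c_{n,t,m}\bigl(1+O(A^n)\bigr)$, obtained by rooting at a second edge and exchanging the two roots, and the factor $n/t \to 1/\alpha_0$ is indispensable: it is precisely what converts the core law (total mass $\alpha_0$ in the central window) into the largest-component law (total mass $1$) and enters the verification that the final density integrates to one. Your version, which replaces this factor by a ``whp the core is the largest component'' claim, would again produce the wrong constant.
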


\noindent Let us point that the parameters $c$, $c^*$ and $c'$ quantify in some sense the dispersion of their respective distributions and not the variance since the second moments of $X_n$, $X^*_n$ and $Z_n$ do not exist.

\paragraph{Outline of the paper.}

The rest of the paper is organised as follows.
Section \ref{sec:prelim} contains several preliminary results, in particular the decomposition of cubic maps.
Our main counting results are proved in Section \ref{sec:enum}.
The second part of the paper is devoted to the analysis of parameters in a random cubic map.
In Section \ref{sec:degree} we find the limiting distribution of the degree of the root face.
And in Section \ref{sec:components} we obtain limiting distributions of the map-Airy type for the sizes of the largest block, cubic block and 3-connected component.

%%%%%%%%%%%%%%%%%%%%%%%%%%%%%%%%%%%%%%%%%%%%%%%%%%%%%%%%%%%%%%%%%%%%
%
% Preliminaries
%
%%%%%%%%%%%%%%%%%%%%%%%%%%%%%%%%%%%%%%%%%%%%%%%%%%%%%%%%%%%%%%%%%%%%
\section{Preliminaries}\label{sec:prelim}

For some background on planar maps we refer the reader to \cite{S15}, and to \cite{D17} for other relevant definitions in graph theory.
We nevertheless explicit some important notions next.

\paragraph{Basic definitions.}

As mentionned in the introduction, all maps considered in this paper are planar and rooted.
A map is \textit{simple} if it has no loops and no multiple edges.
It is \textit{2-connected} if it has at least two vertices, no loops and no cut-vertices, \textit{3-connected} if it has at least four vertices, no 2-cuts and no multiple edges, and \textit{4-connected} if it has at least five vertices and no 3-cuts.
A map is \textit{cubic} if it is 3-regular, and it is a \textit{triangulation} if every face has degree three.
By duality, cubic maps are in bijection with triangulations.
And since duality preserves 3-connectivity, 3-connected cubic maps are in bijection with 3-connected triangulations.
Notice that a general triangulation can have loops and multiple edges, and that a simple triangulation that is not the single triangle is necessarily 3-connected.

At the exception of Section \ref{sec:components}, cubic maps will be counted with respect to the number of \textit{faces minus two.}
By duality, this amounts to counting triangulations by the number of \textit{vertices minus two}.
The smallest cubic maps are the one composed of an isthmus with two loops attached to its endpoints, called the \textit{dumbbell}, and the one composed of two vertices connected by a triple edge, called the \textit{3-bond}; they are depicted by the maps $N_1$ and $N_3$ (respectively) on the left of Figure \ref{fig:replacement}.
As a map, the 3-bond admits a unique rooting, while the dumbbell has three: two on the loops and one on the isthmus.
The smallest triangulations are their respective duals: the triangle which has a single rooting, and the loop with a bridge inside and another one outside.

\subsection{Decompositions of cubic planar maps}\label{ssec:prelim_defs}

\paragraph{3-connected cubic maps.}

Let $T(x)$ and $T_4(z)$ be the generating functions of simple and 4-connected triangulations of the sphere, respectively, where $x$ marks the number of vertices minus two and $z$ the number of faces minus two.
This convention on the variables $x$ and $z$ makes both the algebra and the combinatorics simpler.
A map on $n + 2$ vertices has exactly $3n$ edges and $2n$ faces.
As proven in \cite{tutteT} and \cite{GW02}, the series $T(x)$ and $T_4(z)$ are algebraic functions given by
\begin{equation}\label{eq:Tu}
	T(x) = U(x)\left(1- 2U(x)\right), \qquad x = U(x)(1-U(x))^3,
\end{equation}
and
\begin{equation}\label{eq:T4v}
	T_4(z) = z + V(z)(V(z) - 1)(V(z) + 1)^{-2} - z^2, \qquad z = V(z)(1-V(z))^2.
\end{equation}
By duality the generating function $M(z)$ of 3-connected cubic maps is given by
\begin{equation}\label{eq:3conn_cubic}
	M(z) = T(z) - z,
\end{equation}
where $z$ encodes the 3-bond.

\paragraph{Edge replacement, cherries and beads.}

Given two cubic maps $N$ and $M$, where $st$ is the root edge of $N$, and a directed edge $e=uv$ of $M$, the \textit{replacement} of $e$ by $N$ is the following operation.
Subdivide $e$ twice producing a path $uu'v'v$ in $M$, remove the edge $u'v'$, and identify $u'$ and $v'$ with vertices $s$ and $t$ of $N-st$, respectively.
These results in a cubic map $M'$, whose root edge is that of $M$, unless if $e$ was originally the root of $M$ then there are two possible re-rootings, namely at $uu'$ and $v'v$.
See Figure \ref{fig:replacement} for an illustration.
The reverse operation is called the \textit{removal} of $N$ from $M'$ resulting in the map $M$.

Notice that the replacement of an edge $e$ of $M$ remains valid even when $N$ is rooted at a loop, i.e. $s=t$.
In that case, $N$ is called a \textit{cherry} of $M$ attached at $e$, while when $s\neq t$ it is called a \textit{bead} of $M$ attached at $e$.
On the right of Figure \ref{fig:replacement} is an example of a map $M'$ with one cherry and two beads.
\begin{figure}
\begin{center}
	\begin{minipage}{.48\textwidth}
		\centering
		\includegraphics[scale=1]{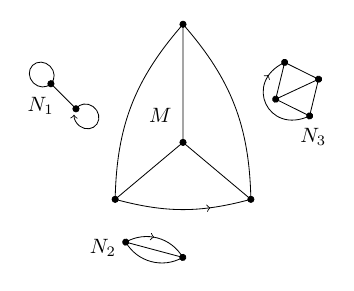}
	\end{minipage}
	\begin{minipage}{.48\textwidth}
		\centering
		\raisebox{.7cm}{\includegraphics[scale=1]{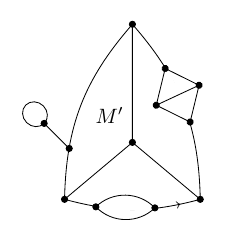}}
	\end{minipage}
\end{center}
\caption{		
	Left: the dumbbell $N_1$ with one of its three possible rootings (one other is at the same loop but in the reverse direction, the third one is at the isthmus), the 3-bond $N_2$ with its unique rooting, and $M$ and $N_3$ which both form the map of the complete graph on four vertices ($K_4$) with its unique rooting.
	Right: the replacement of three edges of $M$ by $N_1$, $N_2$ and $N_3$, resulting in the cubic map $M'$ in which $N_1$ now forms a cherry, while $N_2$ and $N_3$ form beads.
}
\label{fig:replacement}
\end{figure}

\paragraph{2-connected cubic maps.}

Tutte showed in \cite{tutteM} that the family of 2-connected maps can be partitioned into three subclasses, namely series, parallel and polyhedral maps.
Following \cite{BKLM07} (see also \cite{NRR20}), we can easily adapt this idea to the setting of 2-connected cubic maps.
More precisely, let $N$ be a 2-connected cubic map with root $st$.
Then the following three classes form a partition of the class $\mathcal{B}$ of 2-connected cubic maps:
\begin{itemize}
	\item $\mathcal{P}$ (Parallel):
		$N - \{s,t\}$ is not connected.
	\item $\mathcal{S}$ (Series):
		$N - st$ is connected but not 2-connected.
	\item $\mathcal{H}$ (Polyhedral): $N$ is a 3-connected cubic map $C$ where every edge but $st$ is possibly replaced by some map in $\mathcal{B}$.
\end{itemize}	

We let $B(z)$, $P(z)$, $S(z)$ and $H(z)$ be the ordinary generating functions associated to the classes defined above, where once more the variable $z$ {marks the number of faces minus 2}.
This decomposition can be translated into a system of algebraic equations characterizing those generating functions:
\begin{equation}\label{sys:2conn_cubic}
	\def\arraystretch{1.5}
	\begin{array}{lll}
		B(z) &=& S(z) + P(z) + H(z), \\
		S(z) &=& B(z)(B(z) - S(z)), \\
		P(z) &=& z(1 + B(z))^2, \\
		H(z) &=& \ds\frac{M(z(1 + B(z))^3)}{1 + B(z)}.
	\end{array}
\end{equation}
The first equation holds by definition.
The second one follows from the fact that a series map can be decomposed into an arbitrary map in $\mathcal{B}$ and a non-series map in $\mathcal{B-S}$.
The right hand-side of the third equation encodes all possible parallel maps: the $3$-bond ($z$), the maps whose root is in a double edge ($2zB(z)$), and the parallel composition of two maps in $\mathcal{B}$ ($zB(z)^2$).
The last equation corresponds to the definition of the class $\mathcal{H}$, encoded as a composition scheme between the generating functions $M(z)$ and $z(1 + B(z))^3$, i.e. each non-root edge of a 3-connected cubic map is possibly replaced by a map in $\mathcal{B}$.
The cube marks the fact that a cubic map with $n+2$ faces has $3n$ edges, and we divide by $1+B(z)$ to account for the non-replacement of the root edge.

\paragraph{Cubic maps.}

Notice that a cubic map is either 2-connected or it admits an isthmus,
and that a loop is necessarily incident to an isthmus.
We define the following two classes which partition the class of cubic maps that are not 2-connected:
\begin{itemize}
	\item $\mathcal{L}$ (Loop): the root edge is a loop.
	\item $\mathcal{I}$ (Isthmus): the root edge is an isthmus.
\end{itemize}
A map in $\mathcal{L}$ (resp. $\mathcal{I}$) is obtained by possibly replacing by some cubic map the non-root loop (resp. the two loops) of the dumbbell rooted at one of the loops (resp. rooted at an isthmus).

The class $\mathcal{C}$ of cubic maps can then be partitioned as $\mathcal{C} = \mathcal{B} \cup \mathcal{L} \cup \mathcal{I}$.
To  translate this partition into a recursive decomposition, we define the class $\mathcal{D} = \mathcal{L} \cup \mathcal{S} \cup \mathcal{P} \cup \mathcal{H}$ of cubic maps whose root edge is not an isthmus, and we let $L(z)$, $I(z)$ and $D(z)$ be the ordinary generating functions associated to these new classes.
The system \eqref{sys:2conn_cubic} can  be rewritten and extended as (see \cite[Lemma 1]{BKLM07} for a proof):
\begin{equation}\label{sys:cubic}
	\def\arraystretch{1.5}
	\begin{array}{lll}
		C(z) &=& D(z) + I(z), \\
		D(z) &=& L(z) + S(z) + P(z) + H(z), \\
		L(z) &=& 2z(1 + D(z) + I(z)), \\
		I(z) &=& \ds\frac{L(z)^2}{4z}, \\
		S(z) &=& D(z)(D(z) - S(z)), \\
		P(z) &=& z(1 + D(z))^2, \\
		H(z) &=& \ds\frac{M(z(1 + D(z))^3)}{1 + D(z)}.
	\end{array}
\end{equation}
The equations for $P(z)$, $S(z)$ and $H(z)$ are analogue to their counterpart in \eqref{sys:2conn_cubic}, with the difference that they are not restricted to 2-connected cubic maps, in particular edges can be replaced by loop maps.

\paragraph{Cores.}

Let $M$ be a cubic map not rooted at an isthmus, and $C$ be the map obtained from $M$ by iteratively deleting every isthmus while keeping the component containing the root.
If $C$ is 2-connected, then it is called the \textit{2-core} of $M$.
Notice that the 2-core is not in general cubic as it can have vertices of degree two.
The \textit{cubic 2-core} of $M$ is the cubic map obtained after contracting exactly one edge incident to every vertex of degree two of the 2-core of $M$.
Let now $C'$ be the map resulting from the removal of all the cherries and beads of $M$.
If $C$ is cubic and 2-connected, then it is in fact the cubic 2-core of $M$.
If it is furthermore 3-connected, then it is called it the \textit{3-core} of $M$.
Illustrations are given in Figure \ref{fig:cores}.
\begin{figure}
\begin{center}
	\includegraphics[scale=1]{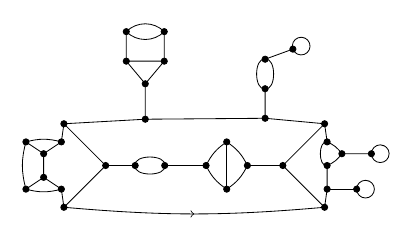}\\
	\includegraphics[scale=1]{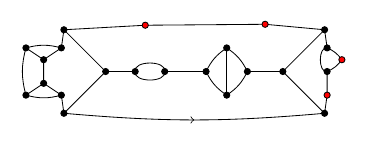}
	\quad
	\includegraphics[scale=1]{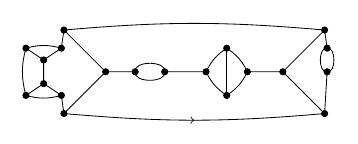}
	\quad
	\includegraphics[scale=1]{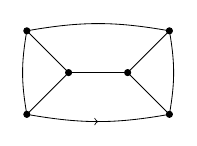}
\end{center}
\caption{
		Top is a cubic map $M$.
		Bottom left is the 2-core of $M$ obtained by contracting each cherry of $M$ to a vertex of degree two (in red).
		Bottom middle is the cubic 2-core of $M$ obtained by contracting one edge incident to each vertex of degree two in the 2-core (or equivalently, obtained directly from $M$ by removing all its cherries).
		Bottom right is the 3-core of $M$ obtained by removing all the beads of $M$.
	}
	\label{fig:cores}
\end{figure}
Note that several cherries and beads attached to the same edge $e$ (as is the case at the top of Figure \ref{fig:cores}) is in fact the result of the replacement of $e$ by a series map.

Cubic maps in the classes $\mathcal{S}$, $\mathcal{P}$ and $\mathcal{H}$ always admit a cubic 2-core, while those in $\mathcal{I}$ and $\mathcal{L}$ never do.
Furthermore, the only cubic maps that admit a 2-core but not a cubic 2-core are those in $\mathcal{S}$ obtained by attaching pending loop maps to the vertices of a rooted cycle (see  case (2) in the proof of Lemma \ref{lem:decomp_2core}).
As pointed out in its definition, every cubic map in the class $\mathcal{H}$ admits a 3-core.
This is also the case for some of the cubic maps in $\mathcal{S}$, namely those obtained by replacing the root edge $uv$ of a map in $\mathcal{H}$ by a map in $\mathcal{D}$; see the series cubic map $M'$ depicted on the right of Figure \ref{fig:replacement} for an example.

\subsection{Asymptotic enumeration}

For $r> 0$, $\varepsilon > 0$, and $0 < \phi < \pi/2$, a \textit{$\Delta$-domain} $\Delta(r,\phi,\varepsilon)$ is a region of the complex plane the form
\begin{equation*}
	\Delta(r,\phi,\varepsilon) = \{ z\in \mathbb{C}: |z| < r + \varepsilon, \ \phi < |{\rm arg}(z-r)|\le \pi \}.
\end{equation*}
The generating function $A(z)$ with non-negative coefficients and radius of convergence $\rho>0$ is said to be $\Delta$-analytic if it admits an analytic continuation around $z=0$ to the domain $\Delta(\rho,\psi,\varepsilon)$.
Furthermore, an algebraic generating function $A(z)$ is said to have a \textit{$3/2$-singularity at $z=\rho$} if for some values $\psi,\varepsilon$ and for $z\sim\rho$ it admits a Puiseux expansion in $\Delta(\rho,\psi,\varepsilon)$ of the form
\begin{equation}\label{eq:3/2sing}
	A(z) = A_0 - A_2\left(1 - \frac{z}{\rho} \right) + A_3\left(1 - \frac{z}{\rho} \right)^{3/2}
	+ O\left( \left(1 - \frac{z}{\rho} \right)^2 \right),
\end{equation}
with $A_0 = A(\rho) > 0$,  $A_1 >0$, $A_2 = \rho A'(\rho) > 0$, and $A_3 > 0$.
In the case where $A(z)$ is an algebraic function, the constants $\rho$, $A_0$, $A_2$ and $A_3$ are algebraic numbers themselves and can be determined, at least implicitly.

The following lemma is an immediate consequence of the Transfer Theorem  (see \cite[Corollary 6.1]{FS09}).

\begin{lemma}\label{lem:technic}
	Let $A(z)$ be a generating function with non-negative coefficients and radius of convergence $\rho>0$.
	Further assume that $A(z)$ has a $3/2$-singularity at $z=\rho$ in the form of \eqref{eq:3/2sing}.
	Then we have
	\begin{equation*}
		[z^n] \, A(z)\sim \frac{3A_3}{4\sqrt{\pi}} \, n^{-5/2} \, \rho^{-n},
		\qquad \hbox{ as } n\to\infty.
	\end{equation*}
\end{lemma}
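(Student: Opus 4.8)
The plan is to apply singularity analysis (the Transfer Theorem, \cite[Corollary 6.1]{FS09}) term by term to the Puiseux expansion \eqref{eq:3/2sing}. First I would note that, by the very definition of a $3/2$-singularity adopted above, the expansion \eqref{eq:3/2sing} is valid in a $\Delta$-domain $\Delta(\rho,\psi,\varepsilon)$ and $A(z)$ is $\Delta$-analytic there, so the hypotheses of the Transfer Theorem are met. Write
\begin{equation*}
	A(z) = A_0 - A_2\Bigl(1 - \tfrac{z}{\rho}\Bigr) + A_3\Bigl(1 - \tfrac{z}{\rho}\Bigr)^{3/2} + R(z),
\end{equation*}
with $R(z) = O\bigl((1-z/\rho)^2\bigr)$ as $z\to\rho$ in the $\Delta$-domain.

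Next I would estimate the contribution of each piece to $[z^n]A(z)$. The polynomial part $A_0 - A_2(1-z/\rho)$ is entire, so $[z^n]\bigl(A_0 - A_2(1-z/\rho)\bigr) = 0$ for all $n\ge 2$ and contributes nothing. For the main term, the standard transfer for algebraic-logarithmic singularities gives
\begin{equation*}
	[z^n]\Bigl(1 - \tfrac{z}{\rho}\Bigr)^{3/2} = \rho^{-n}\,[z^n](1-z)^{3/2} \sim \rho^{-n}\,\frac{n^{-5/2}}{\Gamma(-3/2)},
\end{equation*}
and since $\Gamma(-3/2) = \tfrac{4\sqrt{\pi}}{3}$ one obtains $[z^n]A_3(1-z/\rho)^{3/2} \sim \tfrac{3A_3}{4\sqrt{\pi}}\,n^{-5/2}\rho^{-n}$. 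Finally the remainder $R(z)$, being $O\bigl((1-z/\rho)^2\bigr)$ and $\Delta$-analytic, transfers to $[z^n]R(z) = O\bigl(n^{-3}\rho^{-n}\bigr)$, which is negligible compared with the main term. Summing the three contributions yields the claimed estimate.

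The argument is essentially bookkeeping once the $\Delta$-analyticity is in hand; the only point requiring a little care is the correct evaluation of the Gamma-function constant $\Gamma(-3/2)=\tfrac{4\sqrt{\pi}}{3}$ (equivalently, recalling $[z^n](1-z)^{\alpha}\sim n^{-\alpha-1}/\Gamma(-\alpha)$ with $\alpha=3/2$), and checking that the $O$-estimate in \eqref{eq:3/2sing} is of the uniform type required to invoke the transfer theorem for error terms — which it is, since it is assumed to hold throughout the $\Delta$-domain. No genuine obstacle is expected.
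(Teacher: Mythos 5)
Your proposal is correct and follows exactly the route the paper intends: the paper gives no written proof, simply declaring the lemma "an immediate consequence of the Transfer Theorem," and your term-by-term transfer (polynomial part vanishes, main term via $[z^n](1-z)^{3/2}\sim n^{-5/2}/\Gamma(-3/2)$ with $\Gamma(-3/2)=\tfrac{4\sqrt{\pi}}{3}$, remainder $O(n^{-3}\rho^{-n})$) is the standard way to make that explicit.
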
{}

The next lemma is directly adapted from \cite[Theorem 2.31]{D09}.

\begin{lemma}\label{lem:transfer_sing}
Suppose that the generating function $F(x,y)$ has a local expansion of the form
\begin{equation*}
	F(x,y) = g(x,y) + h(x,y)\left(1 - \frac{y}{\rho(x)} \right)^{3/2}
	+ O\left( \left(1 - \frac{y}{\rho(x)} \right)^2 \right),
\end{equation*}
where the function $\rho(x)$ is analytic around $x_0$ such that $\rho(x_0)\ne 0$, and the functions $g(x,y)$ and $h(x,y)$ are analytic around $(x_0,y_0)$ and satisfy $(\partial/\partial y)g(x,y)\ne 1$, $h(x,y)\ne 0$, and $\rho'(x)\ne (\partial/\partial x)g(x,y)$.
Furthermore, assume that $y = y(x)$ is a solution of the functional equation $y = F(x,y)$, with $y(x_0) = y_0$.
Then $y(x)$ has a local expansion of the form
\begin{equation*}
	y(x) = g_1(x) + h_1(x)\left(1 - \frac{x}{x_0} \right)^{3/2}
	+ O\left( \left(1 - \frac{x}{x_0} \right)^2 \right),
\end{equation*}
where $g_1(x)$ and $h_1(x)$ are analytic around $x_0$, and $h_1(x_0)\ne 0$.
\end{lemma}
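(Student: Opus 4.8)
The plan is to reduce the statement to a routine Newton--Puiseux analysis of an analytic functional equation, after uniformizing the singularity of $F$. First observe that for the term $h(x,y)(1-y/\rho(x))^{3/2}$ to be a genuine singularity of $F$ at $(x_0,y_0)$ one must have $y_0=\rho(x_0)$; otherwise $F$ is analytic at $(x_0,y_0)$ and so is $y(x)$ by the implicit function theorem (using $\partial g/\partial y\ne1$), and there is nothing to prove. So I would set $u=1-y/\rho(x)$, i.e.\ $y=\rho(x)(1-u)$, and track the solution through $u=u(x)$ with $u(x)\to0$ as $x\to x_0$. Substituting into $y=F(x,y)$ and using analyticity of $g,h,\rho$ together with $\rho(x_0)\ne0$, one expands $g(x,\rho(x)(1-u))$ and $h(x,\rho(x)(1-u))$ as convergent power series in $u$ with coefficients analytic in $x$ near $x_0$, and collects the remainder into an analytic term of order $u^2$. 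This rewrites the functional equation in the form
\begin{equation*}
	E(x,u)=u^{3/2}\,\widetilde B(x,u),
\end{equation*}
with $E,\widetilde B$ analytic near $(x_0,0)$. Here $E(x_0,0)=0$ (since $y_0=F(x_0,y_0)$ while the singular and remainder terms vanish at $u=0$), and the three non-degeneracy hypotheses translate exactly into $E_u(x_0,0)=\rho(x_0)\bigl(\partial g/\partial y(x_0,y_0)-1\bigr)\ne0$, $\widetilde B(x_0,0)=h(x_0,y_0)\ne0$, and $E_x(x_0,0)\ne0$, i.e.\ $x\mapsto E(x,0)$ vanishes to exactly first order at $x_0$.

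Next I would pass to the branch variable $v$ with $u=v^2$ --- legitimate on a slit neighborhood (a $\Delta$-domain) of $x_0$ --- so that the equation becomes $\widehat E(x,v):=E(x,v^2)-v^3\widetilde B(x,v^2)=0$, an \emph{analytic} equation near $(x_0,0)$. Writing $E(x,v^2)=e_0(x)+e_1(x)v^2+\cdots$ and $v^3\widetilde B(x,v^2)=b_0(x)v^3+\cdots$, one has $e_0(x_0)=0$, $e_0'(x_0)\ne0$, $e_1(x_0)\ne0$, $b_0(x_0)=h(x_0,y_0)\ne0$, so the Newton polygon of $\widehat E$ at $(x_0,0)$ has a single edge facing the origin, from the monomial $e_0'(x_0)(x-x_0)$ to $e_1(x_0)v^2$, of slope $-1/2$, whose characteristic equation $e_0'(x_0)+e_1(x_0)c^2=0$ has a simple nonzero root. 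By the Newton--Puiseux theorem there are exactly two conjugate branches with $v\to0$, each a convergent series $v(x)=\alpha(x-x_0)^{1/2}+\beta(x-x_0)+\gamma(x-x_0)^{3/2}+\cdots$ valid in a $\Delta$-domain, with $\alpha^2=-e_0'(x_0)/e_1(x_0)\ne0$; matching the coefficient of $(x-x_0)^{3/2}$ in $\widehat E(x,v(x))=0$ gives $2e_1(x_0)\beta=b_0(x_0)\alpha^2$, hence $\beta\ne0$. Squaring, $u(x)=\alpha^2(x-x_0)+2\alpha\beta(x-x_0)^{3/2}+O((x-x_0)^2)$; grouping the even powers of $(x-x_0)^{1/2}$ into an analytic function and factoring $(1-x/x_0)^{3/2}$ out of the odd ones yields $u(x)=u_0(x)+u_1(x)(1-x/x_0)^{3/2}+O((1-x/x_0)^2)$ with $u_0,u_1$ analytic near $x_0$, $u_0(x_0)=0$ and $u_1(x_0)\ne0$. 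Substituting back, $y(x)=\rho(x)(1-u(x))=g_1(x)+h_1(x)(1-x/x_0)^{3/2}+O((1-x/x_0)^2)$ with $g_1=\rho(1-u_0)$ and $h_1=-\rho u_1$ analytic and $h_1(x_0)\ne0$, which is the claim.

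The main obstacle, and the place that needs care, is pinning the singular exponent to be exactly $3/2$: one must check \emph{simultaneously} that $E(x,0)$ vanishes to exactly first order (so the leading part of $u$ is analytic and linear in $x-x_0$) and that the first nonanalytic correction is forced in at order $(x-x_0)$ in $v$ with a coefficient proportional to $h(x_0,y_0)\ne0$ --- if $h(x_0,y_0)$ vanished, $\beta$ would vanish and $u$, hence $y$, would be analytic. One also has to justify that the expansions hold in a genuine $\Delta$-domain and not merely asymptotically along the real axis, which is where one uses that $F$'s expansion is assumed $\Delta$-analytic and that Newton--Puiseux branches of an analytic equation are single-valued and analytic on a slit disk; and one must confirm that the branch picked out is the combinatorially relevant one, namely the one with $u(x)>0$ for real $x$ slightly below $x_0$, which is the unique branch compatible with $y(x_0)=y_0$ and the non-negativity of the coefficients of $y(x)$. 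Since all of this is exactly the content of \cite[Theorem~2.31]{D09}, the adaptation amounts to verifying that the hypotheses listed above match its assumptions.
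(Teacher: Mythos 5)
The paper gives no proof of this lemma at all: it is stated as ``directly adapted from \cite[Theorem 2.31]{D09}'' and used as a black box. Your proposal supplies a self-contained proof of essentially that cited result, via the substitutions $u=1-y/\rho(x)$ and $u=v^2$ followed by a Newton--Puiseux analysis of the resulting analytic equation near $(x_0,0)$; this is correct in substance and is the standard route to Drmota's theorem, so you are not diverging from the paper so much as filling in the argument it outsources. Two points need care. First, your claim that the hypothesis $\rho'(x)\ne(\partial/\partial x)g(x,y)$ ``translates exactly'' into $E_x(x_0,0)\ne 0$ is not literal: with $E(x,0)=g(x,\rho(x))-\rho(x)$ the chain rule gives $E_x(x_0,0)=g_x(x_0,y_0)+\bigl(g_y(x_0,y_0)-1\bigr)\rho'(x_0)$, which is nonzero iff $\rho'(x_0)\ne g_x(x_0,y_0)/\bigl(1-g_y(x_0,y_0)\bigr)$; this matches the stated hypothesis only if one reads $(\partial/\partial x)g$ as the total derivative along $y=\rho(x)$. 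Your identification of the condition that is actually needed --- that $E(x,0)$ vanish to exactly first order at $x_0$ --- is the mathematically correct one, and is the right reading of the (loosely stated) hypothesis. Second, the $O\bigl((1-y/\rho(x))^2\bigr)$ remainder need not be analytic in $u$ (it may contain half-integer powers $u^{5/2},\dots$), so ``collecting it into an analytic term of order $u^2$'' should instead be the observation that after $u=v^2$ all these terms become analytic in $v$ of order $v^4$ (with convergence guaranteed in the algebraic setting in which the lemma is applied), which is all the Newton polygon argument requires. With these two clarifications, and your closing remarks on branch selection and $\Delta$-analyticity, the argument is complete.
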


The proofs of the enumerative results presented in this paper will all follow a common scheme, based on the following steps (we use the terminology and results of~\cite{FS09}, notably Section VII.7.1).

\begin{itemize}
	\item By means of combinatorial decompositions, obtain a system of polynomial equations defining implicitly the generating function of interest $A(z)$.
	Using polynomial elimination, for instance a Gr\"obner basis or successive resultants algorithm, reduce the system to a single bivariate polynomial $P(y,z)$ such that $P(A(z),z) = 0$.
	If $P(y,z)$ is reducible, compute by hand sufficiently many coefficients of $A(z)$ to decide the irreducible factor $Q(y,z)$ of $P(y,z)$ that admits a solution $y(z)$ with the corresponding Taylor expansion at $z=0$.
	As $Q(y,z)$ is irreducible and satisfies $Q(A(z),z) = 0$, it is called the \textit{minimal polynomial} of $A(z)$.
	
	\item Find the dominant singularity $\rho$ of $A(z)$ by looking at the roots of the discriminant of $Q(y,z)$ with respect to $y$.
	By Pringsheim's theorem and due to the fact that $A(z)$ has only non-negative coefficients, $\rho$ will always be a positive real number.
	Prove that $\rho$ is the unique dominant singularity of $A(z)$.
	Since $A(z)$ is algebraic, it is clear that $A(z)$ is then analytic in some $\Delta(\rho,\phi,\varepsilon)$.
	
	\item Using for example the Newton-Puiseux polygon algorithm, compute the Puiseux expansion of $A(z)$ from $Q(y,z)$, in a neighbourhood of $\rho$, corresponding to the branch passing at zero (provided that it holds combinatorially that $A(0) = 0$).
	It will always be of the form of \eqref{eq:3/2sing}.

	\item Conclude with Lemma \ref{lem:technic}.
\end{itemize}

In the rest of the paper when an algebraic generating function $A(z)$ admits a $3/2$-singularity at $z = \rho > 0$, the notation $A_i$ (for $i\ge 0$) will always denote the $i^{\text{th}}$ coefficient of its Puiseux expansion for $z\sim\rho$ in $\Delta(\rho,\psi,\varepsilon)$, and we will omit the mention of the $\Delta$-domain.

\paragraph{An illustrative example.}

As an application of the above scheme, we reprove the estimate on the number of 3-connected cubic planar maps first derived in \cite{tutteT}.
From the equations \eqref{eq:Tu} and \eqref{eq:3conn_cubic} we eliminate $U = U(z)$ and $T = T(z)$ (setting $x = x(z) = z$) to obtain the following irreducible polynomial equation
\begin{equation}\label{poly:3conn_cubic}
	M^4 + (4z + 3)M^3 + (6z^2 + 17z + 3)M^2 + (4z^3 + 25z^2 - 14z + 1)M + z^4 + 11z^3 - z^2 = 0.
\end{equation}
The discriminant with respect to $M$ is $z^2(256z - 27)^3$, whose unique positive root gives the (unique) dominant singularity $27/256$ of $M(z)$.
The Puiseux expansion of $M(z)$ for $z\sim27/256$ is readily computed from \eqref{poly:3conn_cubic} and is equal to
\begin{equation}\label{puis:3conn_cubic}
	 M(z) = M_0 - M_2Z^2 + M_3Z^3 + O(Z^4), \qquad  Z = \sqrt{1 - 256z/27},
\end{equation}
with $M_0 = 5/256$, $M_2 = 21/256$ and $M_3 = \sqrt{6}/24$.
We check that the conditions of Lemma \ref{lem:technic} are satisfied, and we obtain as $n\to\infty$
\begin{equation*}
	[z^n] M(z) = \frac{3M_3}{4\sqrt{\pi}} n^{-5/2} \left(\frac{256}{27}\right)^n (1 + o(1))
	= \frac{\sqrt{6}}{32\sqrt{\pi}} n^{-5/2} \left(\frac{256}{27}\right)^n (1 + o(1)).
\end{equation*}

\subsection{The map-Airy distribution}\label{ssec:Airy}

\paragraph{Density.}

The \textit{map-Airy distribution} (or \textit{Airy distribution of the `map-type'}) has density given by
\begin{equation*}
	\A(x) = 2 e^{-2x^3/3} (x {\rm Ai}(x^2) - {\rm Ai}'(x^2)),
\end{equation*}
where ${\rm Ai}(x)$ is the \textit{Airy function} which satisfies the differential equation $y''-x y = 0$, i.e.
\begin{equation*}
	{\rm Ai}(x) = \frac{1}{2\pi}\int_{-\infty}^{+\infty}  \exp\left( i\left(\frac{t^3}{3} + xt\right) \right)\, dt.
\end{equation*}
The \textit{map-Airy distribution of parameter $c$} is defined by the density $c \A (cy)$.
The tails of the distribution are extremely asymmetric, see a plot of $\A(x)$ shown in Figure~\ref{fig:airy}, in fact the left tail decays polynomially while the right tail decays exponentially:
\begin{align*}\label{eq:airy_integral_representation}
	\A(x) \underset{x\to -\infty}{\sim} \frac{1}{4\sqrt{\pi}}|x|^{-5/2}
		\quad\text{and}\quad
	\A(x) \underset{x\to +\infty}{\sim} \frac{2}{\sqrt{\pi}}x^{1/2} \exp\left( -\frac{4}{3}x^3 \right).
\end{align*}

\begin{figure}
	\centerline{\includegraphics[width=0.7\textwidth]{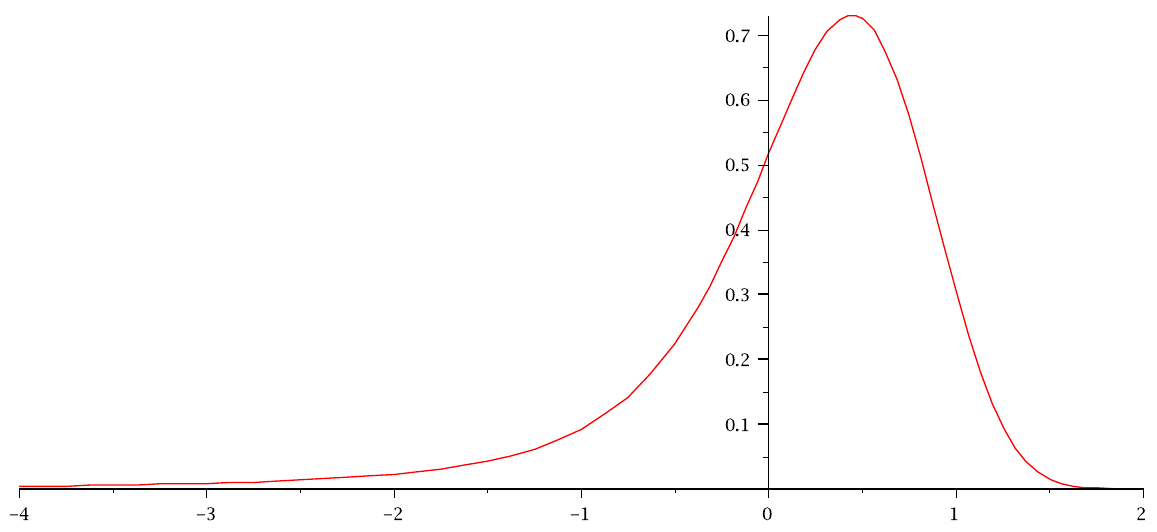}}
	\bigskip \caption{The map-Airy distribution.} \label{fig:airy}
\end{figure}

\paragraph{Integral representations.}

A representation that proves useful in the context of singularity analysis is
\begin{align*}
	\mathcal{A}(x) = \frac{1}{2i\pi} \int_{\infty e^{-i\theta}}^{\infty e^{i\theta}}
	\exp\left(\frac{t^{3/2}}{3} -xt\right)\, dt,
	\qquad \text{for any } \theta\in \left[\frac{\pi}{3}, \frac{2\pi}{3}\right].
\end{align*}
From the above expression we get for any $d_1, d_2 > 0$ that
\begin{equation}\label{eq:airy_integral_representation}
	\frac{1}{2\pi i}\int_{\infty e^{-i2\pi/3}}^{\infty e^{i2\pi/3}}  \exp\left( d_2 s^{3/2} - d_1 ys \right)\, ds
	= (3d_2)^{2/3}\A \left(d_1(3d_2)^{2/3} y\right).
\end{equation}

For an extended account on the Airy distribution as well as other representations (series, etc.), we refer the reader to \cite[Appendix B]{BFSS01}.

%%%%%%%%%%%%%%%%%%%%%%%%%%%%%%%%%%%%%%%%%%%%%%%%%%%%%%%%%%%%%%%%%%%%
%
% Asymptotic enumeration of cubic planar maps
%
%%%%%%%%%%%%%%%%%%%%%%%%%%%%%%%%%%%%%%%%%%%%%%%%%%%%%%%%%%%%%%%%%%%%
\section{Asymptotic enumeration of cubic planar maps}\label{sec:enum}

\subsection{Cubic maps}

\paragraph{Proof of Theorem \ref{thm:enum}(a).}

Consider the system of equations \eqref{sys:cubic}, where $M$ is defined by \eqref{eq:Tu} and \eqref{eq:3conn_cubic}.
By elimination we obtain the minimal polynomial of $C(z)$ which is equal to
\begin{equation}\label{poly:cubic}
	64C^3z^3 + (192z^3 - 96z^2 + z)C^2 + (192z^3 - 192z^2 + 32z - 1)C + 64z^3 - 96z^2 + 4z.
\end{equation}
Its discriminant with respect to $C$ is $z^2(1-432z^2 )^3$, so that the unique dominant singularity of $C(z)$ is $\sqrt{1/432}=\sqrt{3}/36$.
From \eqref{poly:cubic} we compute the Puiseux expansion of $C(z)$ for $z\sim\sqrt{3}/36$ which is
\begin{equation*}
	C(z) = 6\sqrt{3}-10 - (6\sqrt{3}-12) Z^2 + \frac{4\sqrt{6}}{3} Z^3 + O(Z^4),
	\qquad
	\text{where } Z = \sqrt{1 - 36z/\sqrt{3}}.
\end{equation*}
Applying Lemma \ref{lem:technic} gives the asymptotic estimate for $c_n=[z^n]C(z)$ as claimed. \qed

\paragraph{Proof of Theorem \ref{thm:enum}(b).}

In this case we eliminate from  \eqref{sys:2conn_cubic} and obtain
\begin{equation*}\label{poly:2conn_cubic}
	16z^2B^3 + (48z^2 + 8z)B^2 + (48z^2 - 20z + 1)B + 16z^2 - z.
\end{equation*}
Its discriminant with respect to $B$ is $256z^3(2-27z)^3$.
The unique dominant singularity of $B(z)$ is $2/27$ and the Puiseux expansion for $z\sim 2/27$ is
\begin{equation*}\label{puis:2conn_cubic}
	B(z) = \frac{1}{8} - \frac{3}{8} Z^2 + \frac{\sqrt{3}}{3} Z^3 + O(Z^4),
	\qquad
	\text{where } Z = \sqrt{1 - 27z/2}.
\end{equation*}
The estimate for $b_n = [z^n]B(z)$ follows again from Lemma \ref{lem:technic}. \qed

\begin{remark}
Explicit formulas are known for the coefficients of $C(z)$ and $B(z)$, namely
\begin{equation*}
	c_n = \frac{2^{2n+1}(3n)!!}{(n+2)!n!!},
	\qquad
	b_n = \frac{2^{n+1}(3n)!}{n!(2n+2)!},
\end{equation*}
where $n!! = n(n-2) (n-4)\cdots$ (see \cite{M67} and \cite{tutteP}, respectively).
In those cases, the asymptotic estimates follow by applying Stirling's formula.
However, in the rest of the paper closed formulas will not be available and one needs to rely on methods such as Lemma \ref{lem:technic}.
\end{remark}

\begin{remark}
Furthermore, if we eliminate $S$, $P$, $H$ and $M$ from the system composed of \eqref{eq:3conn_cubic} and \eqref{sys:2conn_cubic}, we obtain the simple identity $B(z) = T(z(1+B)^3)$, which already appears in \cite{tutteP}.
\end{remark}

\paragraph{Proof of Theorem \ref{thm:enum}(c).}

We adapt \eqref{sys:cubic} to encode the generating function $C^*(z) = \sum_{n\ge 0} c^*_nz^n$.
We use the same letters as in \eqref{sys:cubic} for the analogous generating functions but with an exception: we need to redefine the class $\mathcal{D}$ to be the class of cubic maps which become simple after the removal of their root edge.

First we subtract the term $z$ from $L(z)$ corresponding to the dumbbell rooted at a loop, and the term $z$ from $P(z)$ corresponding to 3-bond.
Then we modify the equation for $L(z)$ in order to avoid a double edge when replacing a loop with a loop map.
Finally, $C^*(z)$ is obtained by adding $I(z)$ to $D(z)$ and subtracting the maps giving rise to either loops or double edges, that are the maps encoded in $L(z)$, in $2zD(z)$ (maps obtained from the 3-bond by replacing only one edge), and in $L(z)^2$ (the series composition of two loop maps, which produces a double edge).
This gives
\begin{equation}\label{sys:simple}
	\renewcommand\arraystretch{1.56}
	\begin{array}{lll}
		C^*(z) &=& D(z) + I(z) - L(z) - 2zD(z) - L(z)^2, \\
		D(z) &=& L(z) + S(z) + P(z) + H(z), \\
		L(z) &=& 2z(I(z) + D(z) - L(z)), \\
		I(z) &=& \ds\frac{L(z)^2}{4z},\\
		S(z) &=& D(z)(D(z) - S(z)), \\
		P(z) &=& 2zD(z) + zD(z)^2, \\
		H(z) &=& \ds\frac{M(z(1 + D(z))^3)}{1 + D(z)}.
	\end{array}
\end{equation}

By elimination we obtain the minimal polynomial of $C^*(z)$ as
\begin{equation*}
	64z^5(C^*)^4 + p_3(z)(C^*)^3 + p_2(z)(C^*)^2 + p_1(z)(C^*) + p_0,
\end{equation*}
where
\begin{align*}
	& p_0(z) = z^2(z^2 - 11z + 1)(1568z^8 + 476z^7 - 7456z^6 - 8458z^5 - 27z^4 + 2672z^3 + 130z^2 - 330z + 41), \\
	& p_1(z) = 784z^{11} + 13524z^{10} + 29478z^9 - 51033z^8 - 194686z^7 - 166400z^6 - 5454z^5 + 43746z^4 + 4030z^3 \\
	& \qquad\qquad\qquad - 5652z^2 + 904z - 41, \\
	& p_2(z) = -z(1743z^8 + 13968z^7 + 13344z^6 - 52888z^5 - 116934z^4 - 71248z^3 - 4064z^2 + 3768z - 41), \\
	& p_3(z) = 16z^3(57z^4 + 40z^3 + 24z^2 + 208z + 179).
\end{align*}

The discriminant with respect to $C^*$ has two real roots between 0 and 1.
One of them is approximately $0.32$, and can be discarded since it is greater than the dominant singularity of the generating function of 2-connected simple cubic maps, approximately $0.099$, computed in the next proof.
The second one is $\rho\approx 0.096260$, a root of $P(z)$ defined in \eqref{eq:rho}, as claimed.
The Puiseux expansion of $C^*(z)$ for $z\sim\rho$ is
\begin{equation*}
	 C^*(z) = C^*_0 - C^*_2 Z^2 +  C^*_3Z^3 + O(Z^4), \qquad
	 \text{where } Z = \sqrt{1 - z/\rho},
\end{equation*}
and $C^*_0$, $C^*_2$ and $C^*_3$ are computable polynomials in $\rho$.
Their expressions are too long to be reproduced here, and we just write down the numerical approximations:
\begin{equation*}
	 C^*_0\approx 0.020004, \qquad
	 C^*_2\approx 0.14836, \qquad
 	 C^*_3\approx 0.39135.
\end{equation*}
The asymptotic estimate on $c^*_n $ follows again from Lemma \ref{lem:technic}. \qed

\paragraph{Proof of Theorem \ref{thm:enum}(d).}

We restrict the system \eqref{sys:simple} to \textit{2-connected} simple maps.
To this end we need to discard the classes of simple maps that produce cut vertices, namely $\mathcal{L}$ and $\mathcal{I}$.
The generating functions $D(z)$, $S(z)$, $P(z)$ and $H(z)$ have the same meaning as for simple cubic maps, except that they are now restricted to 2-connected simple cubic maps.

Similarly to the previous proof, $B^*(z)$ is obtained from $D(z)$ by removing the maps containing a double edge, which are parallel maps encoded by $zD(z)^2$.
\begin{equation*}\label{sys:2conn_simple}
	\renewcommand\arraystretch{1.56}
	\begin{array}{lll}
		B^*(z) &=& D(z) - 2zD(z), \\
		D(z) &=& S(z) + P(z) + H(z), \\
		S(z) &=& D(z)(D(z) - S(z)), \\
		P(z) &=& 2zD(z) + zD(z)^2, \\
		H(z) &=& \ds\frac{M(z(1 + D(z))^3)}{1 + D(z)}.
	\end{array}
\end{equation*}
By elimination we obtain the  minimal polynomial of $B^*(z)$ as follows
\begin{align*}
	16z^2(B^*)^3 - (16z^4 + 120z^3 - 48z^2 - 8z)(B^*)^{2}
	& + (4z^6 + 76z^5 + 121z^4 - 244z^3 + 118z^2 - 20z + 1)B^* \\
	& - 8z^7 - 76z^6 + 134z^5 - 77z^4 + 17z^3 - z^2.
\end{align*}
The smallest positive root of the discriminant with respect to $B^*$ is $(3\sqrt{3}-5)/2$.
And the Puiseux expansion for $z\sim (3\sqrt{3}-5)/2$ is given by
\begin{equation*}
	B^*(z) = \frac{33\sqrt{3} - 57}{8} - \frac{25 - 15\sqrt{3}}{8} Z^2
	+ \frac{(3 + \sqrt{3})\sqrt{2}\sqrt{41\sqrt{3} - 71}}{3} Z^3 + O(Z^4),
	\ \text{where } Z = \sqrt{1 - z(5 + 3\sqrt{3})}.
\end{equation*}
Once more, we obtain the asymptotic estimate for $b^*_n$ from Lemma \ref{lem:technic}.\qed

\subsection{Triangle-free cubic  maps}

We need to adapt \eqref{sys:cubic} to encode the decomposition of triangle-free cubic maps.
The first obstacle arises from the edge-replacement operation: when replacing an edge of a map $M$ by a map $N$ we can potentially erase or create triangles, and the resulting map becomes or stop being triangle-free.

To address this problem, we will encode whether the root edge of a map belongs or not to a triangle.
We will also keep track of triangular faces in 3-connected cubic maps.
The latter is in order to control whether at least one of the edges of each of those faces gets replaced with a map.
By duality, this is equivalent to keeping track of cubic vertices in 3-connected triangulations.

\paragraph{Cubic vertices in 3-connected triangulations.}

We introduce two bivariate generating functions: $T_1(x,u)$ which counts 3-connected triangulation with at least five vertices and rooted at a cubic vertex, and $T_0(x,u)$ which counts those that are not rooted at a cubic vertex.
In both cases, $x$ marks the number of vertices minus two and $u$ marks the number of cubic vertices.

In order to derive functional equations for $T_0$ and $T_1$, we will proceed as in \cite[Section 4.1]{NRR20} (to which we refer the reader for a more complete proof): start from the univariate generating function $T_4(z)$ of 4-connected triangulations (see \eqref{eq:T4v}), since 4-connected triangulations cannot have cubic vertices, and introduce an auxiliary generating function $T^{(3)}(x,u)$ which counts 3-connected triangulations and where now $u$ marks the number of \textit{inner} cubic vertices, that is, not in the outer face.
The triangulation $K_4$ is treated separately.
This gives the following system of equations:
\begin{equation}\label{eq:T1T0T3}
	\renewcommand\arraystretch{1.56}
	\begin{array}{lll}
		T^{(3)}(x,u) &=& \ds\frac{T_4(x(1 + x^{-1}T^{(3)}(x,u))^2)}{1+ x^{-1}T^{(3)}(x,u)} + x^2(1 + x^{-1}T^{(3)}(x,u))^3 + x^2(u - 1), \\
    	T_1(x,u) &=& uxT^{(3)}(x,u), \\
    	T_0(x,u) &=& (1+ 2xu - 3x)T^{(3)}(x,u) - x^2u.
    \end{array}
\end{equation}

% We are now in a position to prove the items (a) and (b) of Theorem \ref{thm:triangle-free}.
% The proof of items (c) and (d) is very similar and is only sketched to avoid repetition.

\paragraph{Proof of Theorem \ref{thm:triangle-free}(a).}

We use the same letters as in  \eqref{sys:cubic} for the generating functions, with the difference that those with the index 1 (resp. 0) will encode cubic maps that are triangle-free with the exception of at least one triangle (resp. no triangle) incident with the root edge.
They will be called \textit{near-triangle-free} (resp. triangle free).

The series $S_1(z)$, $P_1(z)$ and $H_1$ (resp. $S_0(z)$, $P_0(z)$ and $H_0$) encode near-triangle-free (resp. triangle-free) cubic maps thats are respectively series, parallel and polyhedral.
The maps counted by $L(z)$ and $I(z)$ never have triangles at the root, so we omit their index.
Additionally, the polyhedral maps with $K_4$ as a 3-core will be encoded by $W_0(z)$ and $W_1(z)$.
We now prove the next lemma, which is an adaptation of \cite[Lemma 21]{NRR20} to the setting of planar maps.

\begin{lemma}\label{lem:triangle-free}
	The generating function $F(z) = \sum_{n\ge 0}f_nz^n$, where $f_n$ ($n\ge 0$) is the number of triangle-free cubic maps on $n+2$ faces, satisfies the  following system of equations:
	\begin{equation}\label{sys:triangle-free_cubic}
    	\renewcommand\arraystretch{1.3}
		\begin{array}{lll}
			F &=& I + L + S_0 + P_0 + W_0 + H_0, \\
        	D &=& L + S_0 + P_0 + W_0 + H_0 + S_1 + P_1 + W_1 + H_1, \\
        	L &=& 2z(1 + I + D - L^2 - z) - 4z^2(D - L), \\
        	I &=& \ds\frac{L^2}{4z}, \\
        	P_0 &=& z(1 + D - L)^2, \\
        	P_1 &=& zDL, \\
        	S_0 &=& (D - S_0 - S_1)D - S_1, \\
        	S_1 &=& 2zL + 4z(D - L)L + L^3, \\
        	W_0 &=& z^2(4D^2 + 8D^3 + 5D^4 + D^5), \\
        	W_1 &=& z^2(D + 6D^2 + 2D^3), \\
            H_1 &=& \ds\frac{T_1(x,u)}{3D + 3D^2 + D^3}, \\
        	H_0 &=& (2D + D^2)H_1 + \ds\frac{T_0(x,u)}{1 + D}, \\
    	\end{array}
	\end{equation}
	where the arguments of the univariate series are omitted, $x = z(1 + D)^3$, and $u = (3D + 3D^2 + D^3)/(1 + D)^3$.
\end{lemma}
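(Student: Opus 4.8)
The plan is to mimic the derivation of system \eqref{sys:cubic}, tracking at each step of the decomposition whether the root edge is incident to a triangular face, and separately tracking triangular faces inside 3-connected cores. I would build the equations one class at a time, always splitting each class $\mathcal{X}$ into its "triangle-free at the root" part $X_0$ and its "has a triangle at the root" part $X_1$, with $D=L+S_0+S_1+P_0+P_1+W_0+W_1+H_0+H_1$ and $F=I+L+S_0+P_0+W_0+H_0$ (the final answer discards exactly the root-triangle classes, since in the full map every face must have degree $\neq 3$ and the root face is not special here — rather, one removes the $X_1$ contributions because those maps contain a triangle). The key is a careful case analysis of what happens to triangles under edge replacement: replacing a directed edge $e=uv$ of $M$ by a cubic map $N$ rooted at $st$ destroys any triangular face of $M$ through $e$ and creates a triangular face of $M'$ precisely when the two new edges $uu'$, $v'v$ together with an edge of $N$ close a triangle, which happens only in controlled ways depending on the structure of $N$ near its root.

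Concretely, the main steps are: (1) \textbf{Loops and isthmuses.} A dumbbell-based map in $\mathcal{L}$ or $\mathcal{I}$ is never triangle-incident at the root; the equation for $L$ must forbid creating a triangle when a loop is replaced by a loop-rooted map — this forces the $-z$ (remove the bare dumbbell which, after attaching, could... ) and the $-L^2$, $-4z^2(D-L)$ corrections, exactly as subtractions of the replacements that would produce a triangle through the loop. (2) \textbf{Parallel maps.} In a parallel composition the root edge lies between two $\mathcal{D}$-maps; a triangle at the root appears iff one side contributes a single edge, giving $P_1 = zDL$ (one side is a loop map, which is a single edge after... ) and $P_0 = z(1+D-L)^2$ for the complementary count. (3) \textbf{Series maps.} Here the root edge of a series map abuts a cut vertex; $S_1$ counts those where the first link is "short" ($2zL$, $4z(D-L)L$, $L^3$ enumerating the ways a loop map sits adjacent), and $S_0$ is defined by the remaining series decomposition $S_0 = (D-S_0-S_1)D - S_1$. (4) \textbf{Polyhedral maps with a non-$K_4$ core.} Use the bivariate $T_0, T_1$ from \eqref{eq:T1T0T3} with substitution $x=z(1+D)^3$ and $u=(3D+3D^2+D^3)/(1+D)^3$ (the latter being the "probability" that a given edge of the core carries a $\mathcal{D}$-map, weighted to mark triangular core faces correctly); then $H_1 = T_1(x,u)/(3D+3D^2+D^3)$ and $H_0 = (2D+D^2)H_1 + T_0(x,u)/(1+D)$, the first term handling triangular core faces where only some edges are replaced. (5) \textbf{The $K_4$ core} is handled separately by $W_0$ and $W_1$: $K_4$ has $4$ faces, all triangles, $6$ edges (of which the root one is not replaced), and one enumerates by hand the $z^2(4D^2+8D^3+5D^4+D^5)$ (resp. $z^2(D+6D^2+2D^3)$) configurations of $\mathcal{D}$-maps on the $5$ free edges that leave no (resp. leave a) triangular face.

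The hard part will be step (5) together with the precise bookkeeping in step (4): one must verify that every triangular face of a 3-connected core is either "broken" by having at least one incident edge replaced by a non-trivial map, or else survives and must be excluded from $F$. Getting the substitution $u=(3D+3D^2+D^3)/(1+D)^3$ right — so that marking inner cubic vertices in the triangulation picture corresponds, under duality and the $(1+D)^3$ normalization for $3n$ edges, exactly to marking triangular faces of the cubic core with the correct replacement weights — is the subtle point, as is separating the $H_0$ contribution into "the root core-face is a triangle but is broken by a replacement among its other two edges" ($( 2D+D^2)H_1$) versus "the root core-face is not a triangle" ($T_0/(1+D)$). Once the system is assembled correctly, part (a) of Theorem \ref{thm:triangle-free} follows by the standard scheme: eliminate all auxiliary series (via \eqref{eq:T4v} and \eqref{eq:T1T0T3}) to get the minimal polynomial of $F$, locate the dominant singularity $\phi$ as the relevant root of the discriminant, check the $3/2$-singularity form \eqref{eq:3/2sing}, and apply Lemma \ref{lem:technic}.
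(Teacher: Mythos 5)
Your overall framework is the right one and is essentially what the paper does (the paper simply imports the corresponding system for cubic planar \emph{graphs} from \cite[Lemma 21]{NRR20} and lists the adaptations, rather than re-deriving everything; your plan is the from-scratch version of that derivation). The two structural equations for $F$ and $D$, and your reading of the $H_0,H_1$ equations --- dual cubic vertices of the core triangulation are core triangles, the substitution $u=(3D+3D^2+D^3)/(1+D)^3$ replaces the weight $(1+D)^3$ of a core triangle's edges by ``at least one edge replaced'', and $(2D+D^2)H_1$ restores the root triangle with at least one of its two non-root edges replaced --- are all correct in spirit. One point you should state explicitly, since the whole system hinges on it: the index-$1$ classes are \emph{near-triangle-free} (triangle-free except for at least one triangle incident with the root edge), and they must be kept inside $D$ precisely because substituting such a map into an edge deletes its root edge and hence destroys its only triangles; maps with a triangle away from the root are excluded from every series, which is why several equations carry subtractive corrections rather than a mere $0/1$ split.

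The genuine gaps are in the case analyses you defer or get wrong. Your criterion for $P_1$ --- ``a triangle at the root appears iff one side contributes a single edge'' --- is false: an unreplaced parallel edge of the $3$-bond bounds a face of degree \emph{two} with the root edge, not three; a root triangle is created by a cherry (loop-map) replacement hanging into the opposite face, and one must additionally discard the configurations in which the third face of the $3$-bond (the one not incident to the root edge) becomes a triangle. It is exactly this exclusion that collapses the naive count to $P_1=zDL$, and your one-line justification would not produce that formula. Likewise, the corrections $-2z^2$, $-2zL^2$ and $-4z^2(D-L)$ in the $L$-equation are not about ``forbidding a triangle when a loop is replaced by a loop-rooted map''; they remove replacements of the \emph{non-root} loop of the dumbbell (by the bare $3$-bond, by a series of two loop maps, by a parallel map with one unreplaced side) that create a triangle \emph{away from} the root and hence must be excluded entirely. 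Two of your parenthetical explanations are literally unfinished, and the verifications you yourself flag as the hard part --- the polynomials in $W_0,W_1$ obtained by enumerating replacements on the five non-root edges of $K_4$, the terms of $S_1$, and the check that every surviving core triangle is accounted for by the $u$-substitution --- are not carried out. As written, the proposal is a correct plan with the decisive bookkeeping still missing; to turn it into a proof you must either complete those case analyses or, as the paper does, reduce to \cite[Lemma 21]{NRR20} and verify only the map-specific modifications (removal of symmetry factors, rooting factor $2$, and the loop/multi-edge terms).
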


\begin{proof}
	The first two equations follow from their respective definitions.
	The rest of the proof goes into the lines of \cite[Lemma 21]{NRR20}: setting $x^2=z$ and $u=0$ in \cite[Lemma 17]{NRR20}.
	In order to adapt it to the setting of maps, we remove the various graph symmetries (encoded by the factor $1/2$ in \cite[Lemma 21]{NRR20}), add a factor 2 for the choice of the root face, and add/remove the terms with loops or multiple edges.
	For instance, the terms $2z$ and $-2z^2$ in the equation for $L(z)$, the latter creating a triangle not at the root edge coming from the added term $z$ in the equation for $P_0(z)$.
	And the terms $2zL(z)$ and $z^2D(z)$ in the equations for $S_1(z)$ and $W_1(z)$, respectively.
	Notice finally that all the generating functions with index 2 in \cite[Lemma 21]{NRR20} have been fused here with those of index 1.
\end{proof}

By elimination from the system composed of \eqref{eq:T4v}, \eqref{eq:T1T0T3} and \eqref{sys:triangle-free_cubic}, and setting $x = z(1 + D)^3$ and $u = (3D + 3D^2 + D^3)/(1 + D)^3$, we obtain an irreducible polynomial equation $p_F(F,z)=0$ which has degree 24 in~$F$, and is too large to be reproduced here.
The discriminant with respect to $F$ has four factors with positive roots smaller than one.
Only one such factor has a positive root $\phi\approx 0.054984$ larger than $\rho$, the dominant singularity of all cubic maps.
Hence $\phi$ must be the dominant singularity of $F$.
Its defining equation is given in the statement of Theorem \ref{thm:triangle-free}(a).
From $p_F$ we compute the Puiseux expansion of $F(z)$ for $z\sim\phi$:
\begin{equation*}
	F(z) = F_0 - F_2Z^2 + F_3Z^3 + O(Z^4),
	\qquad
	\text{where } Z = \sqrt{1 - z/\phi}.
\end{equation*}
The $F_i$'s are computable algebraic functions of $\phi$ that are too large to be displayed here, and we only give numerical approximations
\begin{equation*}
	F_0\approx 0.35300, \quad F_2\approx 1.05162, \quad F_3\approx 1.70491.
\end{equation*}
We conclude the proof by applying Lemma \ref{lem:technic}. \qed

\medskip

Since the next proof is very similar to the proof of the previous theorem, we only provide a sketch of it.

\paragraph{Sketch of the proof of Theorem \ref{thm:triangle-free}(b).}

We  adapt the system \eqref{sys:triangle-free_cubic} to the case of simple triangle-free cubic maps encoded by $F^* = F^*(z)$.
This is done by taking care of possible appearances of loops or multiple edges that are not the root.
This gives the following system where the arguments of the funtions are omitted:
\begin{equation}\label{sys:triangle-free_simple}
    \renewcommand\arraystretch{1.3}
	\begin{array}{lll}
    	F^* &=& I + D - S_1 - P_1 - W_1 - H_1 - L^2 - 2z(D - L), \\
        L &=& 2z(I + D - 2z(D - L)  - L^2), \\
        P_0 &=& 2z(D - L) + z(D - L)^2, \\
        S_1 &=& 4z(D - L)L + L^3.
    \end{array}
\end{equation}

From there, we compute the minimal polynomial $p_{F^*}(F^*,z)$ which also has degree 24 in $F^*$.
By carefully analysing its discriminant, we obtain the dominant singularity $\phi^*\approx 0.142046$, whose minimal polynomial $P_{\phi^*}$  is given in the statement of Theorem  \ref{thm:triangle-free}(a).
Finally we compute the Puiseux expansion as before and obtain the estimate for $[z^n]F^*(z)$. \qed

%%%%%%%%%%%%%%%%%%%%%%%%%%%%%%%%%%%%%%%%%%%%
%
% Degree of the root face
%
%%%%%%%%%%%%%%%%%%%%%%%%%%%%%%%%%%%%%%%%%%%

\section{The degree of the root face}\label{sec:degree}

We continue with the convention that $z$ marks faces minus two in a cubic map.
Let $C(z,u)$ be the generating function of cubic maps, where $u$ marks the degree of the root face, and let ${M}(z,u)$ be the analogous series for 3-connected cubic maps.
We first find an expression for $M(z,u)$ using the fact that the number of loopless maps with $n$ edges and root face of degree $k$ equals  the number of 3-connected cubic maps with $n+2$ faces and root face degree $k+2$; see also a bijective proof of this fact in \cite{F10}.

The generating function $A(x,y)$ of loopless maps, where $x$ marks edges and $y$ marks the root face degree was obtained in \cite{BW85} as follows.
The univariate function $A_1(x) = A(x,1)$ is given by
\begin{equation}\label{eq:loopless_univ}
	A_1(z) = A(z,1) = (1 + q)^2(1 - q),
\end{equation}
where $q = q(x)$ satisfies $q = x(1 + q)^4$, which in fact is the generating function of 4-ary trees.
We notice that  $A_1(x) = x^{-1} T(x)$, where $T(x)$ is as in Section \ref{sec:prelim}, and that the unique singularity of $q$ agrees with that of $T$, namely $x = \tau = 27/256$.
It is further shown in \cite{BW85} that $A = A(x,y)$ is the solution of
\begin{equation}\label{eq:loopless}
	xyA^2 + (1 - y - xyA_1)A + y - 1 = 0.
\end{equation}
Solving the quadratic equation and eliminating from the previous equations, we arrive at \cite[Equation (4)]{BW85}
\begin{equation}\label{eq:loopless-u}
	A(x,y) = \frac{(1 + q)^2}{2qy^2}\left(y + 3qy - (1 + q)^2 - (1 + q)(1 + q - y)\sqrt{1 - \frac{4qy}{(1 + q)^2}} \right),
\end{equation}
where the sign in front of the square root is taken so that $A(x,y)$ has non-negative coefficients.
From the bijection between loopless maps and 3-connected cubic maps it follows that
\begin{equation}\label{eq:MA}
	M(z,u) = zu^2(A(z,u)-1) = u^3z^2 + (u^3+2u^4)z^3 + (3u^3 +5u^4 + 5u^5)z^4 + \cdots.
\end{equation}

Our next result extends \eqref{sys:cubic} by considering the degree of the root face as an additional parameter.
Let $D(z,u)$ be the generating function of the class $\mathcal{D}$ of cubic maps defined in Section \ref{sec:prelim}, where $u$ marks de degree of the root face, and similarly for $L_1$, $L_2$, $I$, $S$, $P$ and $H$.

\begin{lemma}\label{lem:systemDegree}
	Let $D(z)=D(z,1)$ and $I(z) = I(z,1)$ the univariate series as in \eqref{sys:cubic}.
	Then the following equations hold:
	\begin{equation}\label{eq:systemDegree}
	\renewcommand\arraystretch{1.56}
	\begin{array}{ll}
		C(z,u) 		&= D(z,u) + I(z,u),\\
		D(z,u) 		&= L_1(z,u) + L_2(z,u) + S(z,u) + P(z,u) + H(z,u), \\
		L_1(z,u) 	&= zu(1 + D(z) + I(z)), \\
		L_2(z,u) 	&= zu^4(1 + D(z,u) + uI(z,u)), \\
		I(z,u) 		&= \ds\frac{L_2(z,u)^2}{zu^4},\\
		S(z,u) 		&= D(z,u)(D(z,u) - S(z,u)), \\
		P(z,u) 		&= zu^2(1 + D(z))(1 + D(z,u)), \\
		H(z,u)  	&= \ds\frac{M\left(z(1 + D(z))^3, \ds\frac{1 + D(z,u)}{1 + D(z)}\right)}{1 + D(z,u)}.
	\end{array}
	\end{equation}
\end{lemma}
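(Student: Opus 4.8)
The plan is to refine the decomposition $\mathcal{C} = \mathcal{B} \cup \mathcal{L} \cup \mathcal{I}$ and the system \eqref{sys:cubic} by tracking, in addition, the degree of the root face. The key point is to understand how the root-face degree behaves under each of the structural operations: edge replacement, series composition, parallel composition, and the polyhedral (3-connected core) substitution. First I would verify the top two equations, which are immediate from the partitions $\mathcal{C} = \mathcal{D} \cup \mathcal{I}$ and $\mathcal{D} = \mathcal{L} \cup \mathcal{S} \cup \mathcal{P} \cup \mathcal{H}$ (with $\mathcal{L}$ split into $\mathcal{L}_1$ and $\mathcal{L}_2$ according to which loop of the dumbbell is the root). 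The splitting of $\mathcal{L}$ is forced because the two loops of a dumbbell contribute differently to the root face: rooting at the ``outer'' loop (class $\mathcal{L}_1$) leaves the root face bounded just by that loop, whereas rooting at the loop that also bounds the map glued on the other side (class $\mathcal{L}_2$) makes the root-face degree grow with whatever is substituted.

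Next I would derive the equations for $L_1, L_2, I, S, P, H$ one at a time, carefully counting the contribution to the root-face degree. For $\mathcal{L}_1$: the root loop bounds a face of degree $1$, contributing $u^1$, and the rest of the dumbbell has its remaining loop replaced by an arbitrary cubic map counted by $1 + D(z) + I(z)$ (with no further contribution to the root face), giving $L_1(z,u) = zu(1 + D(z) + I(z))$. For $\mathcal{L}_2$: here the root face is bounded by the root loop together with part of the substituted map, and a local analysis of how many edges of the substituted map lie on the root face yields the exponent $u^4$ together with the argument $1 + D(z,u) + uI(z,u)$; the asymmetric treatment of the $I$ term (an extra factor $u$) reflects that an isthmus substituted there is traversed twice along the root face. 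The equation for $I$ follows since an isthmus-rooted map is obtained from two loop-rooted maps of type $\mathcal{L}_2$ glued at an isthmus, with the root-face degree additive, hence the product $L_2(z,u)^2$ divided by the overcounted root-loop factor $zu^4$. For $\mathcal{S}$: series composition concatenates two maps in $\mathcal{D}$, and the root-face degree is simply inherited from the first factor and the second, giving $S(z,u) = D(z,u)(D(z,u) - S(z,u))$ exactly as in the univariate case since the series join does not create new root-face edges beyond those already counted. For $\mathcal{P}$: a parallel map glues a $3$-bond-type structure whose two sides carry an arbitrary $\mathcal{D}$-map; the root face sees two edges of the bond plus the root-face boundary of one side (marked by $D(z,u)$) while the other side contributes only to the size (marked by $D(z)$), yielding $P(z,u) = zu^2(1 + D(z))(1 + D(z,u))$.

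The main obstacle will be the polyhedral equation for $H(z,u)$. Here a 3-connected cubic core has all non-root edges replaced by maps in $\mathcal{B} \cup \mathcal{L} \cup \ldots$, equivalently by maps in $1 + D(z)$, and the root-face degree of the resulting map equals the root-face degree of the \emph{core} (a 3-connected cubic map) after substitution, which must be expressed through $M(z,u)$. The delicate bookkeeping is that each of the $\deg$-many edges of the core that border its root face gets inflated by whatever replaces it, so the variable $u$ in $M$ must be re-weighted: the substituted map contributing to the core's root face is itself counted with root-face parameter, leading to the composed argument $\frac{1 + D(z,u)}{1 + D(z)}$ — the numerator marks the single special edge whose replacement lies on the root face, the denominator cancels the generic $1 + D(z)$ weight that is already absorbed into the size argument $z(1 + D(z))^3$. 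The division by $1 + D(z,u)$ on the outside accounts for the fact that the root edge of $H$ is not itself replaced but still borders the root face. I expect establishing precisely this re-weighting — in particular checking that exactly one factor of $(1+D(z,u))$ rather than $(1+D(z))$ appears, by tracking a single root-face edge of the 3-connected core through the substitution — to be the crux of the argument, and I would confirm it by expanding both sides to low order in $z$ against \eqref{eq:MA} and the known small cubic maps.
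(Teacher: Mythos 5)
Your proposal is correct and follows essentially the same route as the paper's proof: enrich the univariate system \eqref{sys:cubic} with the root-face-degree variable $u$, split $\mathcal{L}$ into the subclass where the root face is the degree-one face enclosed by the root loop ($L_1$) versus the face extending into the rest of the map ($L_2$), and track the root-face contribution through each substitution, the key point being that edges of the 3-connected core lying on its root face receive weight $1+D(z,u)$ while internal edges receive $1+D(z)$, encoded via the second argument $(1+D(z,u))/(1+D(z))$ of $M$. Two wordings to fix: the $L_1/L_2$ split is determined by the direction of the root loop (hence which of its two incident faces is the root face), not by which loop of the dumbbell carries the root; and in the $H$ equation the factor $1+D(z,u)$ applies to \emph{every} edge on the core's root face (one factor per such edge, supplied by the exponent of $u$ in $M$), not to a single distinguished edge.
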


\begin{proof}
We revisit \eqref{sys:cubic} and enrich it in order to mark the degree of the root face.
The main differences are the equations for the generating functions  counting loop, parallel and $h$-maps.

The series $L_1$ and $L_2$ count loop maps in which the root face has size one and at least two, respectively.
For instance, the maps counted by $L_1$ are obtained from the dumbbell rooted at a loop, encoded by $zu$, in which the non-root loop is possibly replaced by an arbitrary map, hence the factor $1 + D(z) + I(z)$.
Thoses series are univariate since the maps attached to the non-root loop do not contribute to the degree of the root face.
For $L_2$ however, the dumbbell (rooted at a loop) is now encoded by $zu^4$ and the root face degree of the attached map contribute to the total degree.

In the equation for $I$, the difference with the univariate case is that one can only attach loop maps whose root face degree contributes to the total degree, i.e. those counted by $L_2$.
In the equation for $P$, the 3-bond is now encoded by $z^2u$.
And maps attached to the edge directly to the right of the root edge contribute to the root face degree, while maps attached to the left do not.
Finally, in the equation for $H$ every non-root internal edge of a 3-connected cubic map is possibly replaced by a non-isthmus map whose root face degree is not marked.
Whereas, the maps attached to the external edges contribute to the resulting root face degree.
\end{proof}

Next we analyse the singularities of both $M(z,u)$ and $D(z,u)$.
We remark that the condition $|u|\le 1$ and $u$ close enough to the real axis is sufficient to determine the probability generating function $p(u)$ (see the proof of Theorem \ref{thm:root_degree} below) by analytic continuation.
It could have been replaced by a different condition for $u$ close to~1, but this one is convenient for the proof.

\begin{lemma}\label{lem:univ_sing_M}
Suppose that $z$ and $u$ are complex numbers sufficiently close to the real axis and that $|u|\le 1$.
Then the singularity of $M(z,u)$ does not depend on $u$ and is equal to $\tau = 27/256$.
Furtermore, for fixed $u\sim 1$ the Puiseux expansion of $M(z,u)$ for $z\sim\tau$ is of the form
\begin{equation*}
	M(z,u) = M_0(u) + M_2(u)Z^2 + M_3(u)Z^3 + O(Z^4), \qquad Z=\sqrt{1 - z/\tau},
\end{equation*}
where $M_0(u)$, $M_2(u)$ and $M_3(u)$ are algebraic functions, analytic for $|u|\le 1$ sufficiently close to the positive real axis.
\end{lemma}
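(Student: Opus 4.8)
The plan is to work from the closed forms already at hand. By \eqref{eq:MA} we have $M(z,u)=zu^{2}(A(z,u)-1)$; by \eqref{eq:loopless-u} the series $A$ is expressed explicitly through the $4$‑ary tree generating function $q=q(z)$ (defined by $q=z(1+q)^{4}$) and the single square root $R(z,u)=\sqrt{1-4q(z)u/(1+q(z))^{2}}$; and by \eqref{eq:loopless} the same $A$ is a root of a quadratic whose coefficients are rational in $z$, $u$ and $A_{1}(z)=(1+q)^{2}(1-q)$. Recall that $q$ is a smooth tree‑like schema: it is $\Delta$‑analytic, has unique dominant singularity $\tau=27/256$ with $q(\tau)=\tfrac13$, and admits the expansion $q(z)=\tfrac13+q_{1}Z+q_{2}Z^{2}+\cdots$ with $Z=\sqrt{1-z/\tau}$ and $q_{1}\neq0$. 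Fix a $\Delta$‑domain $\Delta(\tau,\psi,\varepsilon)$ on which $q$ is analytic; shrinking $\varepsilon$ we may assume that $q$ maps $\Delta(\tau,\psi,\varepsilon)$ into an arbitrarily small neighbourhood of the segment $q([0,\tau])=[0,\tfrac13]$.

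The first task is to locate the singularity. On $[0,\tfrac13]$ the function $w\mapsto 4w/(1+w)^{2}$ has derivative $4(1-w)/(1+w)^{3}>0$, so it increases from $0$ to $\tfrac34$; hence for $q\in[0,\tfrac13]$ and $|u|\le1$ one has $\bigl|4qu/(1+q)^{2}\bigr|\le\tfrac34$ and therefore $\operatorname{Re}\bigl(1-4qu/(1+q)^{2}\bigr)\ge\tfrac14>0$. By continuity and compactness the same inequality, with a smaller positive constant, persists on $\Delta(\tau,\psi,\varepsilon)$ once $\varepsilon$ is small and $u$ is restricted to a sufficiently thin sector about $\mathbb{R}_{>0}$ with $|u|\le1$; consequently $R(z,u)$ is analytic (as a principal square root) throughout. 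The only other possible singularities of $z\mapsto A(z,u)$ inside $\Delta(\tau,\psi,\varepsilon)$ are the apparent poles $q=0$ and $u=0$ of the prefactor of \eqref{eq:loopless-u}, which are removable because $M(z,u)=zu^{2}(A(z,u)-1)$ is a power series and hence analytic at $z=0$ and $u=0$, together with the singularity of $q$ at $z=\tau$. Thus $M(z,u)$ is analytic on $\Delta(\tau,\psi,\varepsilon)\setminus\{\tau\}$, jointly in $(z,u)$. Since $\bigl|[z^{n}]M(z,u)\bigr|\le[z^{n}]M(z,1)=[z^{n}](T(z)-z)$ for $|u|\le1$, the radius of convergence of $M(z,u)$ is at least $\tau$; combining this with the $\Delta$‑analyticity it equals $\tau$, and $\tau$ is the unique dominant singularity for every admissible $u$ — matching the case $u=1$ recorded in \eqref{puis:3conn_cubic}.

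For the shape of the singular expansion I would pass to the variable $q$. The coefficients of \eqref{eq:loopless} are rational in $q$ and $u$ through $z=z(q)=q/(1+q)^{4}$ and $A_{1}=(1+q)^{2}(1-q)$, so the relevant branch is an analytic function $\mathcal{A}(q,u)$ of $q$ in a neighbourhood of $q=\tfrac13$ as soon as the discriminant of \eqref{eq:loopless} in $A$ does not vanish there; a direct computation gives that at $q=\tfrac13$ this discriminant equals $\bigl(1-\tfrac{9u}{8}\bigr)^{2}+\tfrac{27}{64}\,u(1-u)$, whose two summands have disjoint real zero sets $\{\tfrac89\}$ and $\{0,1\}$, so it is strictly positive for $u\in[0,1]$ and hence nonzero for $u$ in a thin enough sector. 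Writing $\mathcal{M}(q,u)=z(q)u^{2}(\mathcal{A}(q,u)-1)$, we have $M(z,u)=\mathcal{M}(q(z),u)$. Now differentiate \eqref{eq:loopless} with respect to $q$ and set $q=\tfrac13$: since $z'(q)=(1-3q)/(1+q)^{5}$ and $A_{1}'(q)=(1+q)(1-3q)$ both vanish at $q=\tfrac13$, only the two terms proportional to $\mathcal{A}_{q}$ survive, giving $\bigl(\partial_{A}[\text{LHS of }\eqref{eq:loopless}]\bigr)\mathcal{A}_{q}(\tfrac13,u)=0$; the bracket is the value of $\partial_{A}$ of the defining quadratic at a simple root, hence nonzero, so $\mathcal{A}_{q}(\tfrac13,u)=0$ and therefore $\partial_{q}\mathcal{M}(\tfrac13,u)=z'(\tfrac13)u^{2}(\mathcal{A}-1)+z(\tfrac13)u^{2}\mathcal{A}_{q}(\tfrac13,u)=0$. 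Expanding $\mathcal{M}(q,u)$ in powers of $q-\tfrac13$ and substituting $q(z)-\tfrac13=q_{1}Z+q_{2}Z^{2}+\cdots$ then yields
\[
M(z,u)=M_{0}(u)+M_{2}(u)Z^{2}+M_{3}(u)Z^{3}+O(Z^{4}),
\]
with no $Z^{1}$ term, where $M_{0}(u)=\mathcal{M}(\tfrac13,u)$, $M_{2}(u)=\tfrac12 q_{1}^{2}\,\partial_{q}^{2}\mathcal{M}(\tfrac13,u)$ and $M_{3}(u)=q_{1}q_{2}\,\partial_{q}^{2}\mathcal{M}(\tfrac13,u)+\tfrac16 q_{1}^{3}\,\partial_{q}^{3}\mathcal{M}(\tfrac13,u)$. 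All three are algebraic functions of $u$ (built from the algebraic $\mathcal{M},z,A_{1}$ and the algebraic numbers $q_{1},q_{2}$) and, by the joint analyticity of the previous paragraph, analytic for $|u|\le1$ in a thin sector about the positive real axis; as $u\to1$ one has $M_{3}(u)\to\sqrt6/24\neq0$, recovering \eqref{puis:3conn_cubic}.

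The main obstacle is the uniform estimate underlying the second paragraph (and reused in the third): one must guarantee that the radicand of \eqref{eq:loopless-u}, equivalently the discriminant of \eqref{eq:loopless}, stays off the cut $(-\infty,0]$ not merely on the real interval $[0,\tau]$ but on an entire $\Delta$‑domain and, at the same time, for complex $u$ in a neighbourhood of $[0,1]$. This is precisely what fixes the dominant singularity at $\tau$ uniformly in $u$ and what makes the Puiseux coefficients $M_{0},M_{2},M_{3}$ genuinely analytic in $u$. The rest — the square‑root expansion of $q$, the coincidence $z'(\tfrac13)=A_{1}'(\tfrac13)=0$ that kills the linear term, and reading off the three coefficients — is routine.
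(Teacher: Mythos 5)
Your proof is correct and, on the key point --- locating the singularity by showing that the radicand $1-4qu/(1+q)^2$ of \eqref{eq:loopless-u} stays away from zero because $4q/(1+q)^2$ increases to $3/4$ at $q(\tau)=1/3$ while $|u|\le 1$ --- it is exactly the paper's argument. The extra material you supply (analyticity of the relevant branch of \eqref{eq:loopless} in $q$ near $1/3$ via the nonvanishing discriminant $(1-9u/8)^2+\tfrac{27}{64}u(1-u)$, and the cancellation $z'(1/3)=A_1'(1/3)=0$ forcing $\mathcal{A}_q(1/3,u)=0$ and hence the absence of a $Z^1$ term) correctly fills in details of the Puiseux expansion that the paper leaves implicit.
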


\begin{proof}	
Given the expression in Equation \eqref{eq:loopless-u}, the singularities of $A(x,y)$, hence those of $M(z,u)$, have only two possible sources:
a) those coming from $u$, or
b) the vanishing of the term $1-4uy/(1+u)^2$ inside the square-root.
We can rule out source b) easily as follows.
A simple calculation shows that function $4u(x)/(1 + u(x))^2$ is increasing for $x\ge0$ and its maximum is at the radius of convergence $\tau$, where it takes the value $3/4$.
Since $|y|\le 1$, for $x$ and $y$ sufficiently close to the positive real line, we have $\left|\frac{4uy}{(1+u)^2}\right| < 1$.
\end{proof}

The analogous statement for $D(z,u)$ needs more work.

\begin{lemma}\label{lem:Dzu_sing}
Suppose that $z$ and $u$ are sufficiently close to the positive reals and that $|u|\le 1$.
Then the dominant singularity of $D(z,u)$ does not depend on $u$ and is equal to $\sigma = \sqrt{3}/36$.
Furtermore, for fixed $u\sim 1$ the Puiseux expansion for $z\sim \sigma$ is of the form
\begin{equation*}
	D(z,u) = D_0(u) + D_2(u)Z^2 + D_3(u)Z^3 + O(Z^4), \qquad Z = \sqrt{1 - z/\sigma},
\end{equation*}
where $D_0(u)$, $D_2(u)$ and $D_3(u)$ are algebraic functions, analytic for $|u|\le 1$ sufficiently close to the positive real axis.
\end{lemma}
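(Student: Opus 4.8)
The plan is to treat the system \eqref{eq:systemDegree} as a fixed-point problem for the vector of unknown series and apply Lemma \ref{lem:transfer_sing}, using Lemma \ref{lem:univ_sing_M} as an input. First I would observe that the univariate series $D(z)=D(z,1)$ and $I(z)=I(z,1)$ are already known to have a $3/2$-singularity at $\sigma=\sqrt3/36$ (this follows from the proof of Theorem \ref{thm:enum}(a), since $C(z)=D(z)+I(z)$ and one checks separately that each summand is $3/2$-singular there). Substituting these known series and substituting $M$ via Lemma \ref{lem:univ_sing_M} — noting crucially that the \emph{first} argument of $M$ in the last equation is $z(1+D(z))^3$, whose singularity as a function of $z$ is exactly $\sigma$ because $z(1+D(z))^3$ is the composition argument that maps $\sigma$ to $\tau=27/256$ (this is the identity $B(z)=T(z(1+B)^3)$ mentioned in the remark, in the present non-2-connected analogue) — one reduces \eqref{eq:systemDegree} to a single functional equation $D(z,u)=F(z,u,D(z,u))$ where $F$ is built from polynomials in $z,u$, the $3/2$-singular known series in $z$, and $M_0,M_2,M_3$ composed with $\frac{1+D(z,u)}{1+D(z)}$.

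The next step is to put this equation into the shape required by Lemma \ref{lem:transfer_sing}: a local expansion $F(z,u,y)=g(z,u,y)+h(z,u,y)(1-z/\sigma)^{3/2}+O((1-z/\sigma)^2)$ with $g,h$ analytic near $(\sigma, u_0, y_0)$. The $(1-z/\sigma)^{3/2}$ terms enter $F$ from two places: from the known singular parts of $D(z),I(z)$, and from the $Z^3=(1-z/\tau)^{3/2}$ term of $M$ — but since the first argument of $M$ is $z(1+D(z))^3$, a standard composition computation (expand $1-\frac{z(1+D(z))^3}{\tau}$ in powers of $Z=\sqrt{1-z/\sigma}$, whose leading term is linear in $Z^2$ because the composition argument hits $\tau$ transversally) shows that this contributes a genuine $Z^3$ term with nonzero coefficient. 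Collecting, one gets $g$ and $h$ explicitly as algebraic functions of $z,u,y$; the required nondegeneracy conditions $(\partial/\partial y)g\neq1$, $h\neq0$, $\rho'\neq(\partial/\partial z)g$ (here $\rho(z)\equiv\sigma$ is constant in the role of the ``$\rho(x)$'' of the lemma, or more precisely we apply the lemma with the roles of the variables adjusted so that $u$ is the passive parameter and $z$ the active one) must be verified. The fact that $D(z)=D(z,1)$ is $3/2$-singular at $\sigma$ is precisely the $u=1$ instance, which guarantees these conditions hold at $u=1$, and hence by continuity for $u$ in a neighborhood; the uniformity ``$|u|\le1$ and $u$ near the reals'' is then obtained by checking that no competing singularity (from a branch point of the algebraic system, or from $u$ itself) overtakes $\sigma$ for such $u$, exactly as source (b) was ruled out in Lemma \ref{lem:univ_sing_M}.

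The main obstacle, I expect, is the second part of the argument — excluding competing singularities and establishing that $\sigma$ is genuinely \emph{dominant} for $D(z,u)$ uniformly in $u$ with $|u|\le1$. This requires showing that the discriminant locus of the eliminated minimal polynomial $P(y,z,u)$ of $D(z,u)$ has no root $z$ with $|z|\le\sigma$ other than $\sigma$ itself when $|u|\le1$; because $D(z,u)$ has nonnegative coefficients in $z$ and the coefficients are polynomials in $u$ with nonnegative coefficients, $|u|\le1$ forces $|[z^n]D(z,u)|\le[z^n]D(z,1)$, so the radius of convergence is at least $\sigma$, and one needs the matching upper bound plus aperiodicity — the latter from inspecting the low-order terms of $D(z,u)$ (which contain consecutive powers of $z$). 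The remaining pieces — the explicit composition bookkeeping for the $M$ term, and verifying the Lemma \ref{lem:transfer_sing} hypotheses — are routine but lengthy, and I would carry them out via the elimination/Newton–Puiseux pipeline described in Section \ref{ssec:prelim_defs}, recording only that $D_0(u),D_2(u),D_3(u)$ are the resulting algebraic, locally analytic coefficients.
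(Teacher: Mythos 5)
Your proposal matches the paper's proof in both structure and method: reduce the system \eqref{eq:systemDegree} to a single functional equation for $D(z,u)$, observe that the inherited singularities of $D(z)$, $I(z)$ and $M$ all occur at $z=\sigma$ via the critical composition $\sigma(1+D(\sigma))^3=\tau$, exclude the remaining singularity sources (the explicit square root and a possible branch point) by monotonicity bounds exploiting non-negativity of coefficients for $|z|\le\sigma$, $|u|\le1$, and then transfer the $3/2$-singularity with Lemma \ref{lem:transfer_sing}. The one understatement is that the branch-point exclusion — bounding the $D$-derivative of the functional equation, which brings in $\partial M/\partial u$ evaluated through the critical scheme and a concrete numerical margin — is substantially more involved than the one-line estimate that disposed of source b) in Lemma \ref{lem:univ_sing_M} and constitutes most of the paper's proof; still, the technique you name is exactly the one the paper uses.
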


\begin{proof}
Eliminating from \eqref{eq:systemDegree}, we get
\begin{equation}\label{eq:DM}
	2uD = 2uM + (1 + D)(1 - \sqrt{1 - 4zu^5(1 + D)})
	+ 2zu^2(1 + D)(1 + D_1 + I) + 2zu^3(1 + D)^2(1 + D_1),
\end{equation}
where $M = M(z(1 + D_1)^3,(1 + D)/(1 + D_1))$, $D = D(z,u)$, $D_1 = D(z)$ and $I = I(z)$.
Observe that the singularities of $D(z,u)$ can either arise from the square-root term, i.e. when $4zu^5(1 + D) = 1$, from a branch point, i.e. a zero of the derivative of \eqref{eq:DM} with respect to $D$, or from the singularities of $D_1$, $I$ and $M$.

We will first rule out any singularity coming from the term $\sqrt{1 - 4zu^5(1 + D)}$.
Adapting the proof of Theorem \ref{thm:enum}(a), we observe that $\sigma$ is the dominant singularity of $D(z)$ and that
\begin{equation}\label{eq:bound_D}
	D(\sigma,1) = D(\sigma) = \frac{3}{4}\sqrt{3} - 1.
\end{equation}
Because its coefficients are non-negative, $D(z,u)$ is increasing in both variables on $(0,\sigma]\times(0,1]$.
Hence if $|u|\le 1$ then $|D(z,u)| \le D(|z|,|u|) \le D(|z|,1)$ converges when $|z| < \sigma\approx 0.04811$.
But then assuming $4zu^5(1 + D(z,u)) = 1$, we get a contradiction using \eqref{eq:bound_D}:
\begin{align*}
	|z| = \frac{1}{|4u^5(1 + D(z,u))|} > \frac{1}{4 + 4D(\sigma,1)} = \frac{1}{3\sqrt{3}}\approx 0.19245.
\end{align*}

Next, we rule out the possibility of a branch point coming from \eqref{eq:DM}.
The derivative of \eqref{eq:DM} with respect to $D$ can be written as follows
\begin{equation}\label{eq:branchpoint}
	2u = 2u\frac{M_2}{1 + D_1} + 1 - \sqrt{1 - 4zu^5(1 + D)} + \frac{2zu^5(1 + D)}{\sqrt{1 - 4zu^5(1 + D)}}
	+ 2zu^2(1 + D_1 + I) + 4zu^3(1 + D)(1 + D_1),
\end{equation}
where $M_2 = M_2(z,u) = (\partial/\partial u)M(z(1 + D_1)^3,(1 + D)/(1 + D_1))$ can be computed from \eqref{eq:loopless} and \eqref{eq:MA}, and verifies
\begin{equation}\label{eq:diffDM}
	\frac{\partial}{\partial u}M(z,u) = \frac{zu(A - 2)(2uA - A_1) - (u - 2)(A - 1)}{2Azu^2 - A_1zu - u + 1},
\end{equation}
with $A = A(z,u)$ and $A_1 = A(z)$.
We assume that there exists a pair $(z_0,u_0)$ with $|z_0| \le \sigma $ and $|u_0| \le 1$ and sufficiently close to the real plane which satisfies \eqref{eq:branchpoint}, and then reach a contradiction.

Both $D(z)$ and $I(z)$ have non-negative coefficients and are thus increasing functions on $(0,\sigma]$.
Hence, as a byproduct of Theorem \ref{thm:enum}(a) we get
\begin{align}
	& |I(z_0)| \le I(|z_0|) \le I(\sigma) = 21\sqrt{3}/4, \label{eq:bound_I} \\
	& |z_0(1 + D(z_0))^3| \le |z_0|(1 + D(|z_0|))^3 \le \sigma(1 + D(\sigma))^3 = \tau, \label{eq:critical_scheme}
\end{align}
where the last equality is a so-called \textit{critical} composition scheme.
Notice also that as $|u_0|\le 1$ we have
\begin{equation}\label{eq:bound_sec_var}
	\left|\frac{1 + D(z_0,u_0)}{1 + D(z_0)}\right| \le 1.
\end{equation}
Further remark that the coefficients of $M_2(z,u)/(1 + D_1(z))$ are also non-negative integers, as they count 3-connected cubic maps where an additional edge of the root face is distinguished and in which every edge but the distinguished one is possibly replaced by a non-isthmus cubic map.
It is thus an increasing function over $(0,\sigma]\times(0,1]$.
And using \eqref{eq:critical_scheme} and \eqref{eq:bound_sec_var} we get
\begin{align}\label{eq:bounds_M2}
	\left|\frac{M_2(z_0,u_0)}{1 + D(z_0)}\right|
	\le \frac{1}{1 + D(|z_0|)} \frac{\partial}{\partial u}M\left(|z_0(1 + D(z_0))^3|, \left|\frac{1 + D(z_0,u_0)}{1 + D(z_0)}\right|\right)
	\le \frac{1}{1 + D(\sigma)}\frac{\partial}{\partial u}M\left(\tau, 1\right) = \frac{\sqrt{3}}{32},
\end{align}
where the last equality is computed from \eqref{eq:bound_D} and \eqref{eq:diffDM}, using the value $A(\tau,1) = A(\tau) = 32/27$ obtained from \eqref{eq:loopless_univ}.
From the right hand-side of \eqref{eq:branchpoint} it now remains to consider the generating function
\begin{align*}
	F(z,u) =\frac{1}{2u} \left(1 - \sqrt{1 - 4zu^5(1 + D)} + \frac{2zu^5(1 + D)}{\sqrt{1 - 4zu^5(1 + D)}}
	+ 2zu^2(1 + D_1 + I) + 4zu^3(1 + D)(1 + D_1)\right).
\end{align*}
Since every series within the  brackets has non-negative coefficients $(z,u)$ is  increasing on $(0,\sigma]\times(0,1]$.
Using \eqref{eq:bound_D} and \eqref{eq:bound_I} we obtain
\begin{align}\label{eq:bounds_F}
	\left|F(z_0,u_0)\right|
	\le F(|z_0|,|u_0|)
	\le F(\sigma,1)
	= 1 - \frac{35\sqrt{3}}{96}.
\end{align}
Finally, plugging \eqref{eq:bounds_M2} and \eqref{eq:bounds_F} together in \eqref{eq:branchpoint} we reach a contradiction:
\begin{align*}
	1 = \left|\frac{M_2(z_0,u_0)}{1 + D(z_0)} + \frac{F(z_0,u_0)}{2u_0}\right|
	\le 1 - \frac{\sqrt{3}}{3}
	< 0.423.
\end{align*}

As mentioned above, this means that the dominant singularity of $D(z,u)$ is that of $D(z)$, $I(z)$ and $M(z,u)$.
Its singular behaviour can then be deduced from Lemma \ref{lem:univ_sing_M} together with Lemma \ref{lem:transfer_sing}.
Note that the functions $D_0(u)$, $D_2(u)$ and $D_3(u)$ are the first coefficients of the Puiseux expansion of $D(z,u)$ for $z\sim\sigma$, which can be computed from the minimal polynomial of $D(z,u)$ of degree 9 and obtained by elimination from \eqref{eq:systemDegree}.
This concludes the proof.
\end{proof}

\paragraph{Proof of Theorem \ref{thm:root_degree}.} %

Using the singular expansion of $D(z,u)$ and the equations in \eqref{eq:systemDegree} we obtain an analogous expansion for $I(z,u)$ and thus for $C(z,u)$ when $|u|\le 1$ and $u$ is sufficiently close to the real line:
\begin{equation*}
	C(z,u) = D(z,u)+ I(z,u) = C_0(u) + C_2(u)Z^2 + C_3(u)Z^3 + O(Z^4),  \qquad Z=\sqrt{1-z/\sigma}.
\end{equation*}

Then we have
\begin{equation*}
	p_k = \lim_{n \to \infty} \frac{[z^n][u^k]C(z,u)}{ [z^n]C(z)}.
\end{equation*}
It follows that the probability generating function is equal to
\begin{equation*}
	p(u) = \sum_k p_ku^k =  \frac{C_3(u)}{C_3(1)}.
\end{equation*}
We observe that $p(u)$ is uniquely determined by analytic continuation.
Furthermore, using \texttt{Maple} we obtain that $p(u)$ is
the unique power series with non-negative coefficients which is a solution of the irreducible polynomial
\begin{equation}\label{poly:p}
Q(p,u)=	a_0(u) + a_1(u)p + a_3(u)p^3,
\end{equation}
where $a_0(u)$, $a_1(u)$ and $a_3(u)$ are the polynomials given in the statement of Theorem \ref{thm:root_degree}.

The dominant singularity $u_0$ of $p(u)$ is computed from the discriminant of $Q(p,u)$ with respect to $p$.
It is the unique real root of
\begin{equation*}
	13u^3 + (4\sqrt{3} - 36)u^2 + + (78-26\sqrt{3})u + 24\sqrt{3} - 60  = 0,
\end{equation*}
and we have  $u_0\approx 1.10254$. The former equation can be rationalized and is equivalent to
\begin{equation*}
	13\,{u}^{6} - 72\,{u}^{5} + 252\,{u}^{4} - 504\,{u}^{3} + 600\,{u}^{2} - 432\,u + 144.
\end{equation*}
The Puiseux expansion of $p(u)$ for $u\sim u_0$ is of the form
\begin{equation*}
	C_3(u) = aU^{-3}+ O(U^{-1}), \qquad \text{with } a\approx 0.028650 \text{ and } U = \sqrt{1-u/u_0}.
\end{equation*}	
Using the Transfer Theorem we finally obtain the estimate
\begin{equation*}
 	p_k \sim c \cdot k^{1/2} q^{k},
\end{equation*}
where $c=a/\kappa(3/2)$ and  $q=u_0^{-1}$.
Let us remark that $u_0 >1$ and $q<1$, in accordance to the fact that the $p_k$ are the tail of a probability distribution. \qed

\paragraph{The maximum face degree.}

Let $p_k$ be as before, and let $p^*_k$ be the limiting probability that a random face has degree $k$.
A double counting argument \cite{L99} shows that  the two distributions are related by
\begin{equation*}
	kp^*_k =\mu p_k,
\end{equation*}
where $\mu$ is the expected degree of a random face (notice that $\sum k p^*_k = \mu)$.
It follows that
\begin{equation*}
	p^*_k \sim  c^* k^{-1/2} q^{k}, \quad c^*=c\mu.
\end{equation*}
	
Let $Y_{n,k}$ be the number  the number of faces of degree at least $k+1$ in maps of size $n$.
As discussed in~\cite{DGN11}, in this situation one has
\begin{equation*}
	\ex Y_{n,k} \approx  \frac{c^*q}{1-q}k^{-1/2}q^k n.
\end{equation*}
Denote by $\Delta_n$ the maximum degree of a random cubic map.
Then we have
\begin{equation*}
	\PP(\Delta_n > k) =\PP( Y_{n,k}>0)\le \ex Y_{n,k}.
\end{equation*}
Thus, if $k^{-1/2}q^kn \to 0$, then $\Delta_n \le k$ almost surely when $n\to\infty$.
This happens if $k =(1+\epsilon) \log n /\log (1/q)$.
Usually such a threshold is tight, so one can expect the converse statement  also to be true.
This would imply that $\Delta_n \sim \log n/  \log (1/q)$.
In order to prove this rigorously, one needs to estimate the variance of $Y_{n,k}$ then apply the second moment method.
This can be achieved by analysing the degree of a second root face (see \cite{DGN11} for details in a similar situation). This program could in principle be carried out by extending Lemma \ref{lem:systemDegree} to mark a second face with a new variable $v$. After verifying the conditions of \cite[Theorem 1.1]{DGN11} one would obtain
\begin{equation*}
	\frac{\Delta_n}{\log n} \to \log(1/q), \qquad \ex \Delta_n \sim 	\frac{1}{\log(1/q)} \log n.
\end{equation*}
Although we expect that the former estimates hold,
we have refrained from doing the necessary  lengthy calculations.

%%%%%%%%%%%%%%%%%%%%%%%%
% Largest block
%%%%%%%%%%%%%%%%%%%%%%%%

\section{Largest components}\label{sec:components}

This section is devoted to proving Theorems \ref{thm:largest_graph_block}, \ref{thm:largest_cubic_block} and \ref{thm:largest_3comp}.
The proof strategy follows the approach from~\cite{BFSS01}, which works for many different classes of maps (see \cite[Table 4]{BFSS01}).
It consists in first proving a map-Airy limiting distribution for the component (block, cubic block and 3-connected) containing the root edge, the cores defined in Section \ref{sec:prelim}, then transferring it to the largest component via a double-counting argument.
This strategy (see \cite[Appendix D]{BFSS01} or \cite{GW99}) works by rooting maps at a secondary  edge and then  `exchanging the role of the two roots', so that  one can relate the number of maps whose core has size $t$ with those whose largest component has size $t$.

However, the last step does not extend directly to the size of the largest cubic block or the largest 3-connected component of a random cubic map, that is, in the proofs of Theorems \ref{thm:largest_cubic_block} and \ref{thm:largest_3comp}.
The reason is twofold.
First, when counting maps by faces or vertices the argument  from \cite[Appendix D]{BFSS01} fails, as rooting a cubic map at a face or at a vertex does not carry sufficient information, unlike rooting at an edge.
Secondly, the recursive decomposition of cubic maps based on replacements of  edges in the core  has the particularity that each replacement increases the number of edges in the core by one. Thus, when counting cubic maps by edges, one needs to account for this fact in order to encode the number of edges of the largest component containing the root.

Our solution is to introduce an extra variable $u$ that encodes the number $m$ of (non-empty) edge replacements in the core.
In our context, each replacement is accounted for by subdividing the edge once, thus creating a vertex of degree two.
Adapting the proof method developed in \cite{BFSS01}, we show a limit law of the map-Airy type, with fluctuations of order $O(n^{2/3})$, for the number of edges of the core taking into account vertices of degree two.
Finally, we transfer this result to the size of the core without vertices of degree two by showing that the fluctuations of $m$ are typically Gaussian, of order $O(n^{1/2 + \varepsilon})$ for some $\varepsilon > 0$.
The results in this section are thus obtained for cubic maps counted by edges, but the same limiting distributions and constants hold when considering cubic maps counted by faces.

\paragraph{Pure periodicities.}

We note that when counting cubic planar maps by edges one has to take care of so-called {\it pure periodicities}, that is, the parameters satisfy several congruence relations.
More precisely, the number of edges $e$ and the number of faces $f$ satisfy the relations $e = 3(f - 2)$ so that $e$ is always a multiple of $3$.
This is reflected by the appearance of singularities that are not located on the positive real line.

For instance, if a generating function $F(z)$ with non-negative coefficients has the property that the only positive coefficients are those whose indices are multiples of $3$,
then we can write $F(z) = \widehat F(z^3)$, for some function $\widehat F(z)$.
Furthermore, in this case the dominant singularity $z = \rho'> 0$ of $\widehat F(z)$ has three singularities of $F(z)$ as natural counterparts, $z_1 = \rho$, $z_2 = \rho e^{i2\pi/3}$ and $z_3 = \rho e^{i4\pi/3}$, where $\rho^3 = \rho'$.
Hence, if we perform Cauchy integration to obtain an asymptotic expansion for the coefficient $[z^n] F(z)$  we have to take into account the contribution of the integral close to $\rho$, $\rho e^{i2\pi/3}$ and $\rho e^{i4\pi/3}$.
Due to the relation between $F(z)$ and $\widehat F(z)$, these three contributions are the same up to a third root of unity.
So that if $n$ is a multiple of $3$ then the total contribution is three times the contribution coming from the singularity $z = \rho$.
On the other hand, if $n$ is not a multiple of $3$ then the three contributions sum up to zero.

In order to make the following analysis more transparent and readable, we  assume that no pure periodicity appears, that is, no congruence relation between the non-zero coefficients is considered.
This means in particular that we do not take into account whether $n$ is a multiple of $3$ or not, and thus neglect the factor $3$.
However, we will eventually compute ratios of the form $([z^n]\, F(z)) / ([z^n]\, G(z))$ so that the factor $3$ finally cancels.
Thanks to this   simplification we just have to consider the positive dominant singularity.
Clearly, all computations can be made completely rigorous.

\subsection{Map-Airy law for the size of the 2-core}

This section is devoted to the proof of a map-Airy law for the size (number of edges) of the 2-core of a random cubic planar map with $n$ edges, as $n\to\infty$, but parameterized by a variable $u$ marking the number of vertices of degree two in the 2-core.
Before stating and proving it, we will however need to establish preliminary results.

Our first preliminary result allows us to encode, using trivariate generating functions, the number of edges of the 2-core of a cubic map while keeping track of the number of vertices of degree two.

\begin{lemma}\label{lem:decomp_2core}
	Let $B(y)$ be the generating function of 2-connected cubic maps and $C(z,w,u)$ that of cubic maps, where  $y$ and $z$ mark, respectively, the total number of edges, while $w$ and $u$ mark, respectively, the number of edges and  vertices of degree two in the 2-cores.
	Furthermore, let $L(z)$ be the generating function of loop cubic maps where  $z$  marks only  non-root edges.
	Then the following equation holds:
	\begin{equation}\label{eq:c^*core}
		C(z,w,u) = B\left(\frac{zw}{1 - zwuL(z)}\right)\frac{1}{1 - zwuL(z)}
			+ \frac{zwuL(z)}{1 - zwuL(z)} + \frac{L(z)^2}{4z}.
	\end{equation}
\end{lemma}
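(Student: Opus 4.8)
The plan is to give a direct combinatorial decomposition of an arbitrary cubic map $M$ according to its $2$-core, in the three mutually exclusive cases described in Section \ref{ssec:prelim_defs}: (1) $M$ has a $2$-core that is cubic and $2$-connected (this is the generic case, covered by the classes $\mathcal{S}$, $\mathcal{P}$, $\mathcal{H}$); (2) $M$ has a $2$-core that is a cycle with pending loop maps attached at its vertices (the exceptional series maps); (3) $M$ has no $2$-core, i.e.\ $M\in\mathcal{I}$. I expect the bookkeeping of the variable $w$ (marking edges of the $2$-core, each replacement \emph{creating} a new subdivision vertex) versus $u$ (marking those very vertices of degree two) to be the delicate point, since these two variables are tightly linked by the edge-replacement operation peculiar to cubic maps.

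For case (1): starting from a $2$-connected cubic map $\beta$ with $k$ edges (counted by $B(y)$), I would build $M$ by replacing each of the $k$ edges of $\beta$ by a (possibly empty) \emph{series chain}: a sequence of zero or more loop cubic maps (each from the class $\mathcal{L}$, counted by $L(z)$ in non-root edges) glued in series along that edge. A chain of $\ell$ loop maps attached to an edge turns that edge of the core into a path with $\ell+1$ edges and $\ell$ internal vertices of degree two, so it contributes $(zwuL(z))^{\ell}\cdot zw$ — the extra factor $zw$ coming from the edge itself, with $z$ the true edge and $w$ recording that it lies in the $2$-core — wait, more precisely the clean way is: each original core-edge contributes the factor $\tfrac{zw}{1-zwuL(z)}$, where the geometric series $\sum_{\ell\ge0}(zwuL(z))^\ell$ enumerates the possible series chain on it, each loop map in the chain contributing $z$ (its non-root edges) times $w$ (it adds one edge to the core) times $u$ (it adds one degree-two vertex to the core) times $L(z)$ (the loop map content). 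Since $\beta$ has exactly as many edges as the argument of $B$ is meant to substitute, and one of the $k$ resulting ``super-edges'' must additionally be re-rooted, this produces exactly $B\!\left(\tfrac{zw}{1-zwuL(z)}\right)\tfrac{1}{1-zwuL(z)}$; the trailing $\tfrac{1}{1-zwuL(z)}$ records the series chain on the distinguished root super-edge without its own leading $zw$ factor, which is already absorbed in the rooting convention of $B$. I would spell out this substitution-and-rooting argument carefully, checking against the first few coefficients of $C(z,w,u)$ to make sure the factor-of-$zw$ bookkeeping on the root edge is correct.

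Case (2) gives the term $\dfrac{zwuL(z)}{1-zwuL(z)}$: here the $2$-core is a cycle but \emph{not} cubic and has no cubic $2$-core, the whole map being a series composition of loop maps along a single loop; a chain of $\ell\ge1$ loop maps contributes $(zwuL(z))^{\ell}$, and summing over $\ell\ge1$ gives the stated rational function. Case (3), maps in $\mathcal{I}$, has no $2$-core and contributes the term $\dfrac{L(z)^2}{4z}=I(z)$ exactly as in system \eqref{sys:cubic}, with $w$ and $u$ absent since there is nothing in the (empty) $2$-core to mark. Summing the three contributions yields \eqref{eq:c^*core}. The main obstacle, as noted, is pinning down the precise weight of a single edge-replacement: one must be convinced that inserting a loop map simultaneously adds one edge \emph{and} one degree-two vertex to the $2$-core, so the correct per-loop weight is $zwuL(z)$ and not $zwL(z)$ or $zuL(z)$; I would justify this by re-examining the definition of the $2$-core and the cubic $2$-core in Section \ref{ssec:prelim_defs}, where a series map is explicitly identified with iterated edge-subdivision, and by cross-checking low-order terms with \texttt{Maple} against the univariate specialisation $C(z,1,1)=C(z)$ recovered from \eqref{sys:cubic}.
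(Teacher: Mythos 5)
Your decomposition is exactly the paper's: the same three cases (2-connected core with each edge replaced by a chain of loop maps, a rooted cycle of pending loop maps, and the isthmus-rooted maps $I(z)=L(z)^2/(4z)$), with the same per-edge weight $zw(zwuL(z))^{\ell}$ and the same extra factor $1/(1-zwuL(z))$. The only slight wobble is your gloss on that trailing factor: it accounts for the re-rooting choice among the $\ell+1$ edges of the path replacing the core's root edge (i.e.\ it upgrades the root edge's factor to $zw/(1-zwuL(z))^2$), not for an "absorbed $zw$"; otherwise the argument is correct and essentially identical to the paper's proof.
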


\begin{proof}
After iteratively removing all the cherries from a cubic map, we are left with three possible configurations: the resulting map is either a dumbbell (1), a rooted cycle of length at least one (2), or is 2-connected and is thus the 2-core (3).

Case (1): cubic maps in this case are counted by $L(z)^2/4z$.
Dumbbells are not 2-connected and thus there is no occurrence of the variables $w$ and $u$ here.

Case (2): those cubic maps can be derived by attaching a loop map at each vertex of a rooted cycle of size at least one.
Attaching a loop map at a vertex $v$ is done by removing its root edge and identifying its root vertex with $v$.
Each such vertex (originally of degree two) amounts for one in the size of the 2-core.
So that the generating function for this family of cubic maps is $zwuL(z)/(1 - zwuL(z))$.

Case (3): any 2-core of some cubic map can be obtained by replacing the edges of some 2-connected cubic map by (possibly empty) paths.
The length of each added path contributes to as many vertices of degree two in the 2-core.
Conversely, one recovers the original map from its 2-core by attaching a loop map at each vertex of degree two.
Such maps are thus encoded by $B\left(zw/(1 - zwuL(z))\right)/(1 - zwuL(z))$.
The factor $1/(1 - zwuL(z))$ amounts for the extra re-rooting of the map when the root edge of the core was replaced by a non-empty path.
\end{proof}

% First, we derive combinatorially a multivariate functionnal equation, involving a composition scheme which turns out to be critical in a sense slightly different from \cite{BKW21}, to encode with generating functions both the number of edges and of vertices of degree two of the 2-core of a cubic map.
In order to adapt the methods of \cite{BKW21} to the composition scheme in \eqref{eq:c^*core}, we will need a notion of 'criticality'.
This is our next preliminary result.

\paragraph{Critical composition scheme.}

When considering the first summand of the right side of \eqref{eq:c^*core}, i.e. of the composition scheme, the variables $z$ and $w$ always appear together as $zw$ since an edge in the 2-core contributes to the total number of edges.
This motivates the changes of variables
\begin{align*}
	x = zw
	\qquad\text{and}\qquad
	v = uL(z),
\end{align*}
which transforms \eqref{eq:c^*core} into
\begin{equation*}
	C(x,v) = B\left(\frac{x}{1 - xv}\right)\cdot\frac{1}{1 - xv} + \frac{xv}{1 - xv} + \frac{L(z)^2}{4z}.
\end{equation*}
Thus, for $t > 0$ we have
\begin{equation}\label{eq:numer_distributionXstar}
	[w^t]\, C(z,w,u)
	= [x^t]\, B\left(\frac{x}{1 - xv}\right)\cdot\frac{1}{1 - xv}z^t + v^tz^t,
\end{equation}

We will eventually see that only the first summand of \eqref{eq:numer_distributionXstar} contributes to the total mass of the distributions of $X_n$ and $X_n^*$, the random variables in Theorems \ref{thm:largest_graph_block} and \ref{thm:largest_cubic_block} respectively.
Hence, we can safely restrict our study to the composition scheme
\begin{equation*}
	\widetilde C(z,w,u) = \widetilde  C(x,v) = B\left(\frac{x}{1 - xv}\right)\cdot\frac{1}{1 - xv},
\end{equation*}
and define $x(v) = \tau/(1+ \tau v)$ so that
\begin{equation}\label{eq:x(v)_tau}
	\frac{x(v)}{1 - x(v)v} = \tau
	\quad\text{and}\quad
	x(v)^{-1} = \tau^{-1} + v.
\end{equation}
This composition scheme is `critical' in the sense that, when $z\sim\rho$ and $w=u=1$, we have
\begin{equation*}
	v = uL(z) \sim L_0, \quad x(v)\sim\rho,
	\quad\text{and}\quad
	\frac{zw}{1-zwu L(z)} = \frac{x}{1-xv} \sim \frac{\rho}{1 - \rho L_0} = \tau,
\end{equation*}
that is,
\begin{equation*}
	\rho = \frac{\tau}{1 + \tau L_0}
	\quad\text{and}\quad
	\rho^{-1} = \tau^{-1} + L_0.
\end{equation*}

\medskip

Our last preliminary result is to derive asymtptotic estimates for the number of cubic maps, analogue to those in Section \ref{sec:enum}, but this time counted by edges.

\paragraph{Estimates for cubic maps counted by edges.}

We already mentioned that there are three times as many edges as faces minus two in a cubic map.
Hence, the corresponding generating functions and their dominant singular behaviour can be obtained by applying the change of variable $z\to z^3$ in the proofs of Theorem \ref{thm:enum} (a) and (b).

The generating function $B(y)$ (where the exponent of $y$ takes care of the number of edges) has its dominant singularity at $\tau = 2^{1/3}/3$ and for $y\sim\tau$ we have the expansion
\begin{equation}\label{eq:expansion_B}
    B(y) = B_0 - B_2Y^2 + B_3Y^3 + O(Y^4), \qquad Y = \sqrt{1 - \frac{y}{\tau}},
\end{equation}
with $B_0 = 1/8$, $B_2 = 9/8$ and $B_3 = 3$.
There are corresponding singularities at  $\tau e^{i2\pi/3}$ and $\tau e^{i4\pi/3}$ that we neglect.
So in what follows we assume that $B(y)$ is $\Delta$-analytic so that the Transfer Theorem implies
\begin{equation}\label{eq:estimate_B_edges}
    [y^n] B(y) = \frac{3 B_3}{4\sqrt \pi} \, n^{-5/2}\tau^{-n}(1 + o(1)),
    \qquad \text{as } n\to\infty.
\end{equation}
As mentioned above we omit a factor $3$ and the restriction to $n$ that are multiples of $3$.

Similarly we assume that, both $C(z)$ and $L(z)$ are $\Delta$-analytic at $\rho = 2^{1/3}\sqrt{3}/6$.
And for $z\sim\rho$ and $Z = \sqrt{1 - z/\rho}$ it holds that
\begin{equation}\label{eq:expansions_LC}
    C(z) = C_0 - C_2Z^2 + C_3Z^3 + O(Z^4)
    \quad\text{and}\quad
    L(z) = L_0 - L_2Z^2 + L_3Z^3 + O(Z^4),
\end{equation}
with $C_0 = 6\sqrt{3} - 10$, $C_2 = 18(2 - \sqrt{3})$, $C_3 = 12\sqrt{2}$, and $L_0 = 2^{2/3}(\sqrt{3} - 3/2)$, $L_2 = 2^{2/3}(3 - \sqrt{3})$, $L_3 = 2^{1/6}4$.
In particular, this means that the number of rooted cubic maps with $n$ edges is
\begin{equation}\label{eq:estimate_C_edges}
    [z^n]\, C(z) = \frac{3 C_3}{4\sqrt \pi} \, n^{-5/2}\rho^{-n}(1+o(1)),
    \qquad \text{as } n\to\infty,
    \quad\text{and with } \rho = 2^{1/3}\sqrt{3}/6.
\end{equation}
Again we omit here a factor $3$ and the restriction to $n$ that are multiples of $3$.

\medskip

We are now in a position to state and prove the main result of this section.

\begin{proposition}\label{prop:airy_2core}
Let $\alpha_0 >0$ (that will be fixed later), $q = O(1)$ and $0 < \varepsilon < 1/6$.
Then for $t = \alpha_0n + qn^{2/3}$ and $u = 1 + O(t^{-1/2 + \varepsilon})$ the following holds as $n\to\infty$
\begin{equation}\label{eq:airy_estimate}
	[z^nw^t]\, \widetilde C(z,w,u) = \frac{3B_3}{4\sqrt{\pi}}(1 + \tau uL_0)^{5/2}
	\rho^{-n+t}\left(\tau ^{-1} + uL_0\right)^t n^{-5/2}\alpha_0^{-3/2} n^{-2/3} c\A(cq) (1 + o(1)),
\end{equation}
where $c = \left(3L_3/L_2\right)^{2/3}\alpha_0^{-1}(1 - \alpha_0)^{-2/3}$.
\end{proposition}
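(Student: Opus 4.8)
The plan is to follow the two–step strategy of \cite{BFSS01,BKW21} — first extract the block–size variable $w$, then extract $z$ — the new feature being that the inner function in \eqref{eq:c^*core} is not of the classical form $wH(z)$ but the rational expression there, so an auxiliary substitution is needed first. Concretely, with $x=zw$ and $v=uL(z)$ as in the text, I would set $y=\frac{x}{1-xv}$, equivalently $x=\frac{y}{1+yv}$; then $\frac1{1-xv}=1+yv$, so $\widetilde C(x,v)=B(y)(1+yv)$, and a change of variables in Cauchy's formula (or Lagrange inversion for $y=x(1+vy)$) gives the clean identity $[x^t]\widetilde C(x,v)=[y^t]\bigl(B(y)(1+yv)^t\bigr)$. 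Since $v=uL(z)$ is independent of $w$ and $x=zw$ is linear in $w$, this yields
\begin{equation*}
	[z^nw^t]\,\widetilde C(z,w,u)=[z^{n-t}]\Bigl([y^t]\bigl(B(y)(1+yuL(z))^t\bigr)\Bigr),
\end{equation*}
a double contour integral. (Equivalently, for fixed $v$ near $L_0$ the expansion \eqref{eq:expansion_B} of $B$ at $\tau$ shows that $\widetilde C(x,v)$ has a $3/2$-singularity at $x=x(v)=\tau/(1+\tau v)$ with singular coefficient $B_3(1+\tau v)^{5/2}$, which is the source of the factor $(1+\tau uL_0)^{5/2}$ in the statement.) As in the surrounding text I would suppress the factor $3$ coming from pure periodicities.

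For the inner extraction in $y$, note that for $z$ in the range that will matter ($z\sim\rho$, so $v=uL(z)$ lies in a small complex neighbourhood of $L_0>0$) the phase $\psi(y)=\log(1+yv)-\log y$ satisfies $\psi'<0$ throughout, so there is no interior saddle and the contribution is governed entirely by the singularity of $B$ at $\tau$. Using a Hankel contour around $\tau$ with the local scaling $y=\tau(1-s/t)$ — under which $B(y)\sim B_3(s/t)^{3/2}$ and $(1+yv)^t y^{-t-1}\sim\tau^{-1}\bigl((1+\tau v)/\tau\bigr)^{t}e^{s/(1+\tau v)}$ — one obtains, uniformly for such $v$ and $t$,
\begin{equation*}
	[y^t]\bigl(B(y)(1+yv)^t\bigr)=\frac{3B_3}{4\sqrt\pi}\,(1+\tau v)^{t+5/2}\tau^{-t}t^{-5/2}\bigl(1+o(1)\bigr),
\end{equation*}
the subleading singular terms of $B$ and the $O(s^2/t)$ corrections in the exponent contributing only the relative error. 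Substituting $v=uL(z)$ and using $\eqref{eq:x(v)_tau}$ reduces everything to analysing $[z^{n-t}]\bigl((1+\tau uL(z))^{t+5/2}\bigr)$, up to the prefactor $\frac{3B_3}{4\sqrt\pi}\tau^{-t}t^{-5/2}$.

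The outer extraction in $z$ is where the map-Airy law appears. I would write $(1+\tau uL(z))^{t+5/2}=\exp\bigl((t+\tfrac52)\log(1+\tau uL(z))\bigr)$, expand near $\rho$ in $Z=\sqrt{1-z/\rho}$ using \eqref{eq:expansions_LC}, and combine with $z^{-(n-t+1)}$; the Cauchy integrand then becomes, near $\rho$, a constant times $Z\exp(\mathsf A Z^2+\mathsf B Z^3)$, where $\mathsf A=(n-t)-(t+\tfrac52)\frac{\tau uL_2}{1+\tau uL_0}+O(1)$ is the $Z^2$-coefficient and $\mathsf B$ the $Z^3$-coefficient, of size $\Theta(t)$. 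The constant $\alpha_0$ is then forced by requiring the $\Theta(n)$ part of $\mathsf A$ to vanish, i.e. $\alpha_0^{-1}=1+\frac{\tau L_2}{1+\tau L_0}$; with $\tau/(1+\tau L_0)=\rho$ and the explicit Puiseux coefficients this gives $\alpha_0=1/\sqrt3$, matching the announced expectation $n/\sqrt3$. For $t=\alpha_0n+qn^{2/3}$ one gets $\mathsf A=\Theta(n^{2/3})$ and $\mathsf B=\Theta(n)$, and the hypothesis $\varepsilon<1/6$ is exactly what makes the perturbation $u-1=O(t^{-1/2+\varepsilon})$ change $\mathsf A$ only by $o(n^{2/3})$, so the centre of the law is not moved. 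A confluent saddle-point analysis — equivalently, the substitution $s=Z^2$ followed by the integral representation \eqref{eq:airy_integral_representation} of the map-Airy density — then produces the scaling factor $n^{-2/3}$, the map-Airy factor $\mathcal A(cq)$ with $c$ built from $\mathsf A$, $\mathsf B$ (hence from $L_2,L_3,\alpha_0$), and the remaining explicit prefactors, giving $[z^{n-t}]\bigl((1+\tau uL(z))^{t+5/2}\bigr)\sim(1+\tau uL_0)^{t+5/2}\rho^{-(n-t)}n^{-2/3}(\mathrm{const})\,\mathcal A(cq)$; feeding this into the reduction above and simplifying (using $\tau^{-t}(1+\tau uL_0)^{t}=(\tau^{-1}+uL_0)^t$, $t^{-5/2}\sim\alpha_0^{-5/2}n^{-5/2}$) yields the claimed formula.

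The hardest part will be the two uniformity statements that glue the two contour integrations. First, the inner estimate must hold uniformly for $v$ in a shrinking neighbourhood of $L_0$ and for $t$ over the whole Hankel range, so that it may legitimately be substituted into the $z$-integral; here one must be careful to keep the factor $(1+\tau v)^t$ intact rather than replace it by its value at $v=L_0$, since $t(v-L_0)$ is not $o(1)$. Second, one needs the confluent saddle-point estimate for $[z^{n-t}]\bigl((1+\tau uL(z))^{t+5/2}\bigr)$ uniformly for $q$ in a bounded interval and $u=1+O(t^{-1/2+\varepsilon})$, which requires controlling the $O(nZ^4)$ and $O(tZ^4)$ remainders and checking that the relevant part of the Hankel contour can be deformed to the rays $\infty e^{\pm 2\pi i/3}$ of \eqref{eq:airy_integral_representation}. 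This is precisely where the scheme departs from the classical $C(wH(z))$ of \cite{BFSS01}: the extra analytic factor $\frac1{1-xv}=1+yv$ and the fact that each edge–replacement creates one new edge in the core (encoded by the appearance of $u$ together with $L(z)$) are what yield the non-standard inner function and the extra $(1+\tau uL_0)^{5/2}$.
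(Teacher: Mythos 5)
Your proposal is correct and follows essentially the same route as the paper: a two-stage coefficient extraction in which the inner step yields $[x^t]\widetilde C(x,v)\sim\frac{3B_3}{4\sqrt{\pi}}(1+\tau v)^{t+5/2}\tau^{-t}t^{-5/2}$ (identical to the paper's intermediate formula, which it obtains via the substitution $X=\sqrt{1-x/x(v)}$ and the Transfer Theorem rather than your Lagrange-inversion identity $[x^t]\bigl(B(y)(1+yv)\bigr)=[y^t]\bigl(B(y)(1+yv)^t\bigr)$), followed by the Cauchy integral in $z$ on a contour at distance $\Theta(n^{-2/3})$ from $\rho$, with $\alpha_0$ fixed by cancelling the $\Theta(n^{1/3})$ term in the exponent and the map-Airy integral representation supplying $c\A(cq)$. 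Your identification of the key uniformity requirements (keeping $(1+\tau v)^t$ intact and controlling the $u$-perturbation via $\varepsilon<1/6$) matches the paper's treatment.
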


\begin{proof}
From the notation in \eqref{eq:x(v)_tau}, let us set
\begin{equation*}
	X = \sqrt{1 - \frac{x}{x(v)}},
	\quad\text{so that}\quad
	x = x(v)(1 - X^2).
\end{equation*}
Thus, when $y = x/(1 - xv)\sim\tau$ and considering $Y$ from \eqref{eq:expansion_B}, we can write
\begin{equation}\label{eq:YtoX}
	Y^2 =  1 - \frac y{\tau} = 1 - \frac{\frac{x}{1 - xv}}{\tau}
	= 1 - \frac{\frac{x(v)(1 - X^2)}{1 - x(v)(1 - X^2)v}}{\frac{x(v)}{1 - x(v)v}}
	= \frac{X^2}{1 - x(v)v} + O(X^4).
\end{equation}
So that the following asymptotic estimates hold
\begin{equation*}\label{eq:amenable_cauchy_formula}
	\begin{split}
	[x^t]\, \widetilde C(x,v)z^t
	& \sim [x^t]\, B_3 Y^3 \frac{1}{1 - xv} z^t,
		\qquad\text{ by applying the Transfer Theorem to } \eqref{eq:expansion_B} \text{ as } y\sim\tau,  \\
	& \sim [x^t]\, B_3 \frac{X^3}{(1 - x(v)v)^{3/2}} \frac{1}{1 - xv} z^t,
		\qquad\text{using } \eqref{eq:YtoX}, \\
	& \sim B_3 \frac{3}{4\sqrt{\pi}} (1 - x(v)v)^{-5/2} x(v)^{-t} t^{-5/2} z^t,
		\qquad\text{by the Transfer Theorem as } t\to\infty, \\
	& = B_3 \frac{3}{4\sqrt{\pi}}(1 + \tau uL(z))^{5/2} \left(\tau^{-1} + uL(z)\right)^t t^{-5/2} z^t,
		\qquad\text{using } \eqref{eq:x(v)_tau}.
	\end{split}
\end{equation*}
Hence,
\begin{equation*}
	[w^t]\, C(z,w,u) \sim  [x^t]\, \widetilde C(x,uL(z))z^t
	\sim B_3 \frac{3}{4\sqrt{\pi}} (1 + \tau v)^{5/2} \left(\tau^{-1} + v\right)^t t^{-5/2} z^t.
\end{equation*}

Looking at each term of the above estimate, we first obtain using \eqref{eq:expansions_LC} and as $z\sim\rho$
\begin{equation}\label{eq:asymptotic_main_term}
	\begin{split}
	\left(\tau^{-1} + uL(z)\right)^t
	& \sim \left(\tau ^{-1} + uL_0 - uL_2Z^2 + uL_3Z^3\right)^t \\
	& \sim \left(\tau ^{-1} + uL_0\right)^t \exp\left(-\frac{uL_2}{\tau ^{-1} + uL_0}tZ^2 + \frac{uL_3}{\tau ^{-1} + uL_0}tZ^3\right).
	\end{split}
\end{equation}
And similarly, as $z\sim\rho$ we get
\begin{align}\label{eq:asymptotic_extra_term_5/2}
	\begin{split}
	\left(1 + \tau uL(z)\right)^{5/2}
	& \sim (1 + \tau uL_0)^{5/2} \exp\left(-\frac{\tau uL_2}{1 + \tau uL_0}\frac{5}{2}Z^2 + \frac{\tau uL_3}
	 {1 + \tau uL_0}\frac{5}{2}Z^3\right).
	\end{split}
\end{align}

With this at hand, we are in a position to compute the coefficient $[z^n][x^t]\, \widetilde C(x,uL(z))z^t$ via Cauchy's formula, using a suitable contour $\gamma$ in the $z$-plane that will be defined later:
\begin{equation}\label{eq:cauchy}
	[z^n][x^t]\, \widetilde C(x,uL(z))z^t = [z^{n-t}][x^t]\, \widetilde C(x,uL(z) )
	= \frac 1{2i\pi} \int_{\gamma} \frac{[x^t]\widetilde C(x, uL(z))}{z^{n-t}} \frac{dz}{z}.
\end{equation}
In fact, as argued in the proof of \cite[Theorem 5(ii)]{BFSS01}, the asymptotically significant part of \eqref{eq:cauchy} arises when $z$ is at distance $(n-t)^{-2/3}$ from $\rho$, for instance when $Z^2$ in \eqref{eq:asymptotic_main_term} scales with $(n-t)^{-2/3}$.
This justifies the following definition of the contour $\gamma$, depicted in Figure \ref{fig:contour}.
It includes a positively oriented "loop" that is made of two rays at an angle of $\pi/3$ and $-\pi/3$ with $(0,+\infty)$, and intersecting on the real axis at distance $\rho n^{-2/3}$ of $\rho$ from the left.
This contour is called $\gamma_2$ in the proof of \cite[Theorem 5(ii)]{BFSS01}.
\begin{figure}
	\begin{center}
		\includegraphics[scale=1]{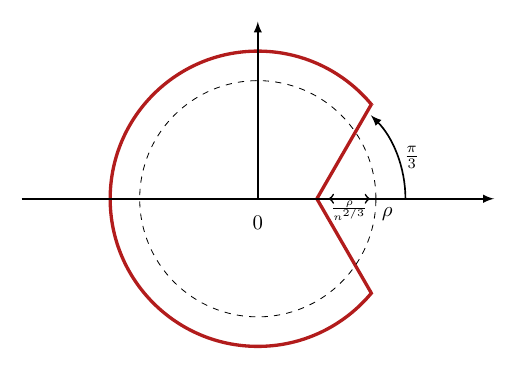}
	\end{center}
	\caption{The contour of integration $\gamma$ in the $z$-plane.}
	\label{fig:contour}
\end{figure}
Furthermore, this motivates the following change of variable
\begin{equation}\label{eq:change_variable_cauchy}
	s = s(z) = (n - t)^{2/3}(1 - z/\rho).
\end{equation}
So that
\begin{align*}
	Z^2 = \frac{s}{(n-t)^{2/3}}, \quad
	z = z(s) = \rho\left(1 - \frac{s}{(n-t)^{2/3}}\right)
	\quad\text{and}\quad
	dz = -\frac{\rho}{(n-t)^{2/3}}ds.
\end{align*}
Consequently, this changes \eqref{eq:asymptotic_main_term} into
\begin{equation}\label{eq:tau_L_cauchy}
	\left(\tau^{-1} + uL(z)\right)^t \sim \left(\tau ^{-1} + uL_0\right)^t
	\exp\left(-\frac{uL_2}{\tau ^{-1} + uL_0}\frac{ts}{(n-t)^{2/3}}
	+ \frac{uL_3}{\tau ^{-1} + uL_0}\frac{ts^{3/2}}{n-t}\right).
\end{equation}
While \eqref{eq:asymptotic_extra_term_5/2} becomes
\begin{align*}
	\left(1 + \tau uL(z)\right)^{5/2} \sim (1 + \tau uL_0)^{5/2}.
\end{align*}
And the \textit{shifted Cauchy kernel} is also transformed into
\begin{equation}\label{eq:cauchy_kernel}
	z^{-(n-t)} = \rho^{-(n-t)} e^{s(n-t)^{1/3} + o(n^{-1/3})}.
\end{equation}

Now, from $t = \alpha_0n + qn^{2/3}$ we get the following estimates as $n\to\infty$
\begin{align*}
	& n - t = (1 - \alpha_0)n - qn^{2/3},
	\qquad\qquad\qquad t^{-5/2} = \alpha_0^{-5/2}n^{-5/2} + O(n^{-17/6}), \\
	& (n-t)^{-1} = \frac{1}{1 - \alpha_0}n^{-1} + O(n^{-4/3}),
	\qquad (n-t)^{-2/3} = \frac{1}{(1 - \alpha_0)^{2/3}}n^{-2/3} + O(n^{-1/3}), \\
	& t(n-t)^{-1} = \frac{\alpha_0}{1 - \alpha_0} + O(n^{-1/3}),
	\qquad\quad (n-t)^{1/3} = (1 - \alpha_0)^{1/3}n^{1/3} - \frac{q}{3(1 - \alpha_0)^{2/3}} + O(n^{-1}), \\
	& t(n-t)^{-2/3} = \frac{\alpha_0}{(1 - \alpha_0)^{2/3}}n^{1/3}
	+ \left(\frac{2\alpha_0}{3(1 - \alpha_0)^{5/3}} + \frac{1}{(1 - \alpha_0)^{2/3}}\right)q + O(n^{-1/3}).
\end{align*}
Hence, the exponential part of the product of \eqref{eq:tau_L_cauchy} with \eqref{eq:cauchy_kernel} is asymptotically
given by
\begin{equation}\label{eq:exponent}
	\begin{split}
		& s n^{1/3} \left( (1 - \alpha_0)^{1/3} - \frac{uL_2}{\tau ^{-1} + uL_0}\frac{\alpha_0}{(1 - \alpha_0)^{2/3}} \right) \\
		& - qs \left( \frac{1}{3(1 - \alpha_0)^{2/3}} + \frac{uL_2}{\tau^{-1} + uL_0 } \left( \frac{2\alpha_0}{3(1 - \alpha_0)^{5/3} } + \frac 1{(1 - \alpha_0)^{2/3}} \right) \right)
		+ \frac{\alpha_0}{1-\alpha_0}\frac{L_3}{\tau^{-1} + u L_0}s^{3/2}.
	\end{split}
\end{equation}
Notice then that, using the values in \eqref{eq:expansion_B} and \eqref{eq:expansions_LC}, if we set
\begin{equation}\label{eq:value_alpha}
    \alpha_0 = \frac{\tau^{-1} + L_0}{\tau^{-1} + L_0 + L_2} = \frac 1{\sqrt 3},
\end{equation}
this ensures that
\begin{align*}
    (1 - \alpha_0)^{1/3} -  \frac{L_2}{\tau ^{-1} + L_0}\frac{\alpha_0}{(1 - \alpha_0)^{2/3}} = 0
    \quad\text{and}\quad
	\frac{L_2}{\tau^{-1} + L_0} = \frac{1 - \alpha_0}{\alpha_0}.
\end{align*}
And from $u = 1 + O(t^{-1/2 + \varepsilon})$ we further get that \eqref{eq:exponent} reduces asymptotically to
\begin{align*}
	\frac{L_3}{L_2}s^{3/2} - \frac{1}{\alpha_0(1 - \alpha_0)^{2/3}}qs + o(1).
\end{align*}
Remark finally that under the change of variable \eqref{eq:change_variable_cauchy} $\gamma$ evolves into a contour made of two segments of angle $2\pi/3$ and $-2\pi/3$, intersecting at $-1$, and each of length $O(\log^2 n)$.
As argued in the proof of \cite[Theorem 5(ii)]{BFSS01}, this contour can be extended back to infinity at the expense of exponentially small error terms.

\medskip

Taking all the above assumptions into account then reverting the orientation of the new contour and shifting it by one, we can tranform \eqref{eq:cauchy} into
\begin{align*}
	[z^n][w^t]\, \widetilde C(z,w,u)
	\sim \frac{3B_3(1 + \tau uL_0)^{5/2}}{4\sqrt{\pi}}
	\frac{\alpha_0^{-5/2}\rho^{-n+t}\left(\tau ^{-1} + uL_0\right)^t}{(1 - \alpha_0)^{2/3}n^{2/3}n^{5/2}} \frac{1}{2\pi i}
	\int_{\infty e^{-i2\pi/3}}^{\infty e^{i2\pi/3}}
	e^{ \frac{L_3}{L_2}s^{3/2} - \frac{1}{\alpha_0(1 - \alpha_0)^{2/3}}qs } ds.
\end{align*}	
The claimed estimate is then deduced by setting $d_1 = \alpha_0^{-1}(1 - \alpha_0)^{-2/3}$ and $d_2 = L_2^{-1}L_3$ in the integral representation from \eqref{eq:airy_integral_representation}.
\end{proof}

% We will now use the integral representation from \eqref{eq:airy_integral_representation}, setting $d_1 = \alpha_0^{-1}(1 - \alpha_0)^{-2/3}$ and $d_2 = L_2^{-1}L_3$, to obtain the estimate
% \begin{equation}\label{eq:airy_estimate}
% 	[z^nw^t]\, \widetilde C(z,w,u) = \frac{3B_3}{4\sqrt{\pi}}(1 + \tau uL_0)^{5/2}
% 	\rho^{-n+t}\left(\tau ^{-1} + uL_0\right)^t n^{-5/2}\alpha_0^{-3/2} n^{-2/3} c\A(cq) (1 + o(1)),
% \end{equation}
% where $c = \left(3L_3/L_2\right)^{2/3}\alpha_0^{-1}(1 - \alpha_0)^{-2/3}$ and $u = 1 + O(t^{-1/2 + \varepsilon})$.

\subsection{Proofs of the main results}

\paragraph{Proof of Theorem \ref{thm:largest_graph_block}.}

In order to study the distribution of $X_n$ in the central regime, that is, when blocks can have vertices of degree two and size in the range $\alpha_0n + O(n^{2/3})$, we first consider the random variable $Y_n$ denoting the size of the 2-core in $\textsf{M}_n$.
% the generating function $\widetilde C(z,w,1)$.
Let $q$ be in some bounded interval, then we have
\begin{align*}
	\mathbb{P}\left( Y_n = \lfloor \alpha_0\, n + qn^{2/3} \rfloor \right)
 	& \sim \frac{[z^nw^t]\, \widetilde C(z,w,1)}{[z^n]\, C(z)}
	\sim \frac{B_3}{C_3}(1 + \tau L_0)^{5/2} \alpha_0^{-3/2} n^{-2/3} c\A(cq).
\end{align*}
It follows then from the double-counting lemma used to prove Theorem 7 in \cite[Appendix D]{BFSS01} that, for $t = \alpha_0 n + qn^{2/3}$, there exists some $A<1$ such that
\begin{align*}
	\mathbb{P}\left( X_n = t \right) = \frac{n}{t} \mathbb{P}\left( Y_n = t \right) (1 + O(A^n)).
\end{align*}
Since from \eqref{eq:value_alpha} $\alpha_0 = 1/\sqrt 3$, and using the values in \eqref{eq:expansion_B} and \eqref{eq:expansions_LC}
\begin{equation}\label{eq:cst_one}
	\frac{B_3}{C_3}(1 + \tau L_0)^{5/2}  \alpha_0^{-5/2} = 1,
\end{equation}
it immediately follows that
\begin{equation*}
	\mathbb{P}\left( X_n = \lfloor n/\sqrt 3 + qn^{2/3} \rfloor \right) \sim n^{-2/3} c\A(cy),
\end{equation*}
with $c = 2\sqrt{3}/(1 - 1/\sqrt{3})^{4/3}$, as claimed.
\qed

%\paragraph{The number of vertices of degree two in the 2-core.}

\medskip

We now focus on proving the first intermediate result required for the proof of Theorem \ref{thm:largest_cubic_block}, that is, in a random cubic map whose 2-core has $t$ edges and $m$ vertices of degree two, $m$ has Gaussian fluctuations centered around $t$.

\begin{lemma}
Fix some $0 < \varepsilon, \varepsilon' < 1/6$, and let as before $t = \alpha_0 n + qn^{2/3}$ and $u = 1 + O(t^{-1/2 + \varepsilon})$, where $\alpha_0 = 1/\sqrt{3}$ and $q = O(1)$.
Further let $m = \beta_0t + r$, for some $\beta_0 > 0$ (that will be fixed later) and $r = O(t^{1/2 + \varepsilon'})$.
Then as $n\to\infty$ we have
 \begin{equation}\label{eq:airy_estimate_with_exp}
	[z^nw^tu^m]\, \widetilde C(z,w,u)
	\sim \frac{3B_3}{4\sqrt{\pi}}\frac{(1 + \tau L_0)^{5/2}}{\alpha_0^{3/2}} \frac{\rho^{-n}}{n^{5/2}}
	\frac{e^{-r^2/(2\sigma^2\alpha_0n)}}{\sqrt{2\pi\sigma^2\alpha_0n}} n^{-2/3} c\A(cq),
\end{equation}
where $c$ is as in \eqref{eq:airy_estimate}.
\end{lemma}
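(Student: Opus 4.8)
The statement refines Proposition~\ref{prop:airy_2core} by reading off one more coefficient, namely the coefficient of $u^m$ in the trivariate series $\widetilde C(z,w,u)$. The strategy is to extract $u^m$ by a Cauchy integral in $u$ \emph{after} performing the Cauchy integral in $z$ that already appears in the proof of Proposition~\ref{prop:airy_2core}, treating $u$ as a perturbation of $1$ of size $O(t^{-1/2+\varepsilon})$ as dictated by the hypothesis. The key observation is that, in the proof of Proposition~\ref{prop:airy_2core}, the variable $u$ enters the asymptotic estimate only through the prefactor $\rho^{-n+t}(\tau^{-1}+uL_0)^t$ and through lower-order terms that vanish under $u=1+O(t^{-1/2+\varepsilon})$; the Airy integral itself is independent of $u$. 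So I would start from the pre-asymptotic identity
\begin{equation*}
	[z^nw^t]\,\widetilde C(z,w,u)
	\sim \frac{3B_3}{4\sqrt{\pi}}(1 + \tau uL_0)^{5/2}\,
	\rho^{-n+t}\bigl(\tau^{-1} + uL_0\bigr)^t\, n^{-5/2}\alpha_0^{-3/2}\, n^{-2/3}\, c\mathcal{A}(cq)\,(1+o(1)),
\end{equation*}
which is exactly \eqref{eq:airy_estimate} but keeping $u$ explicit, and then apply $[u^m]$ to both sides.

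\textbf{Extracting the $u^m$ coefficient.} The $u$-dependence that matters is concentrated in the factor $(\tau^{-1}+uL_0)^t$, which I would write as $(\tau^{-1}+L_0)^t\exp\bigl(t\log(1+L_0(u-1)/(\tau^{-1}+L_0))\bigr)$. Expanding the logarithm to second order in $u-1$ and using $t(\tau^{-1}+L_0)^{-1}L_0 = \alpha_0^{-1}L_0$ together with the relation $\rho^{-1}=\tau^{-1}+L_0$ (so $(\tau^{-1}+L_0)^t$ pairs with $\rho^{-n+t}$ to give $\rho^{-n}$), the $u$-dependent part takes the shape of a Gaussian generating function $\exp(\mu(u-1) - \tfrac{1}{2}\sigma^2 t(u-1)^2 + \cdots)$ after recentering. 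Writing $m=\beta_0 t + r$, the mean term fixes $\beta_0$ as the appropriate ratio built from $L_0$, $\tau$ and $\alpha_0$ (the same combination that makes the linear term in $u-1$ disappear when $m$ is at its mean), and a saddle-point / quasi-power argument — equivalently, a direct Cauchy integral over a circle $|u-1|\sim t^{-1/2+\varepsilon}$ of radius chosen so that $t(u-1)^2 = O(1)$ — yields the local central limit factor $e^{-r^2/(2\sigma^2\alpha_0 n)}/\sqrt{2\pi\sigma^2\alpha_0 n}$, using $t\sim\alpha_0 n$. The prefactor $(1+\tau uL_0)^{5/2}$ and the dependence of $c$ on $u$ through $L(z)$ only contribute corrections that are $o(1)$ in the regime $u=1+O(t^{-1/2+\varepsilon})$, so they can be evaluated at $u=1$, giving $(1+\tau L_0)^{5/2}$ and the constant $c$ of \eqref{eq:airy_estimate}.

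\textbf{The main obstacle.} The technically delicate point is justifying that the two coefficient extractions — the Cauchy integral in $z$ producing the Airy factor, and the one in $u$ producing the Gaussian factor — decouple, i.e. that one may first obtain the $u$-explicit version of \eqref{eq:airy_estimate} uniformly for $u$ on a small complex circle around $1$ of radius $t^{-1/2+\varepsilon}$, and only then extract $[u^m]$. This requires checking that all the $o(1)$ error terms in the proof of Proposition~\ref{prop:airy_2core} are uniform in such $u$, in particular that the location of the dominant singularity $\rho(u)$ of $z\mapsto\widetilde C(z,w,u)$ varies analytically and stays bounded away from the other singularities, and that the contour manipulations (the shift by one, the extension of the truncated contour back to infinity with exponentially small error) survive this uniformity. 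Once this is in place, the Gaussian factor comes out by a routine quasi-powers argument since the exponent is quadratic in $u-1$ with the linear term killed by the choice of $\beta_0$, and the Berry--Esseen-type error from the neglected cubic and higher terms in $\log(\tau^{-1}+uL_0)$ is $O(t^{3\varepsilon - 1/2})=o(1)$ in the stated range, so one obtains \eqref{eq:airy_estimate_with_exp} as claimed.
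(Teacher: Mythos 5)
Your proposal is correct and follows essentially the same route as the paper: both start from the $u$-explicit form of the estimate in Proposition~\ref{prop:airy_2core}, recognise that the only relevant $u$-dependence sits in the factor $\bigl((\tau^{-1}+uL_0)/(\tau^{-1}+L_0)\bigr)^t$ (which the paper identifies as the probability generating function of a binomial, i.e.\ a sum of $t$ i.i.d.\ variables), and extract $[u^m]$ by a saddle-point/local-limit Cauchy integral concentrated on $|u-1|=O(t^{-1/2+\varepsilon})$, with $\beta_0$ the mean and $\sigma^2$ the variance. The uniformity issue you flag is exactly the point the paper addresses by noting that the dominant part of the $u$-integration lies in the range where Proposition~\ref{prop:airy_2core} holds.
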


\begin{proof}
By using the identity $\rho = (\tau^{-1} + L_0)^{-1}$, we obtain from \eqref{eq:airy_estimate}
\begin{equation}\label{eq:airy_estimate_with_u}
	[z^nw^t]\, \widetilde C(z,w,u)\sim \frac{3B_3}{4\sqrt{\pi}}\frac{(1 + \tau uL_0)^{5/2}}{\alpha_0^{3/2}} \frac{\rho^{-n}}{ n^{5/2}} \left(\frac{\tau^{-1} + uL_0}{\tau^{-1} + L_0}\right)^t n^{-2/3} c\A(cq).
\end{equation}
Remark that the function
\begin{equation*}
	u\mapsto \left(\frac{\tau^{-1} + uL_0}{\tau^{-1} + L_0}\right)^t
\end{equation*}
can be considered as the probability generating function of the sum of $t$ iid random variables.
In the present context, this leads to the binomial distribution.

A known method to obtain a local limit theorem is to use Cauchy's formula and a saddle point like integration for the contour $|u| = 1$ (see for instance \cite{D94}).
For $r = O(t^{1/2 + \varepsilon'})$, this yields
\begin{equation}\label{eq:concentration_binomial}
	[u^m]\, \left(\frac{\tau^{-1} + uL_0}{\tau^{-1} + L_0}\right)^t
	= \frac 1{2\pi i} \int_{|u| = 1} \left(\frac{\tau^{-1} + uL_0}{\tau^{-1} + L_0}\right)^t  \frac{du}{u^{m+1}}
	\sim \frac{e^{-r^2/(2\sigma^2t)}}{\sqrt{2\pi\sigma^2t}},
\end{equation}
when
\begin{equation*}
	\beta_0 = \frac{L_0}{\tau ^{-1} + L_0} = 1 - \frac{\sqrt 3}{2}
	\text{ and } \sigma^2 = \frac{L_0}{\tau(\tau ^{-1} + L_0)^2}.
\end{equation*}
It is important to note that the asymptotic leading term ${e^{-r^2/(2\sigma^2t)}}/{\sqrt{2\pi\sigma^2t}}$ in \eqref{eq:concentration_binomial} comes from the part of integration where $|u - 1| = O(t^{-1/2 + \varepsilon})$.
This justifies the above assumptions and also shows that the asymptotic leading term of \eqref{eq:airy_estimate_with_u} is indeed equal to the claimed estimate.
\end{proof}
% This justifies the above assumptions and also shows that the asymptotic leading term of \eqref{eq:airy_estimate_with_u} is given by
% \begin{equation}\label{eq:airy_estimate_with_exp}
% 	[z^nw^tu^m]\, \widetilde C(z,w,u)
% 	\sim \frac{3B_3}{4\sqrt{\pi}}\frac{(1 + \tau L_0)^{5/2}}{\alpha_0^{3/2}} \frac{\rho^{-n}}{n^{5/2}}
% 	\frac{e^{-r^2/(2\sigma^2\alpha_0n)}}{\sqrt{2\pi\sigma^2\alpha_0n}} n^{-2/3} c\A(cq).
% \end{equation}

\begin{remark}\label{rem:airy_tails}
	Concerning the other regimes of $t$, outside the central region $t = \alpha_0n + O(n^{2/3})$, of the distribution of $[z^nw^tu^m]\, \widetilde C(z,w,u)/[z^n]\, C(z)$, one can for instance show that the \textit{left tail} decays like $O(t^{-1/2}t^{-3/2})=O(t^{-2})$, when either $t = O(1)$ or $t = \alpha n$ with $\alpha < \alpha_0$, and $m = \beta_0t+ r$.
	Just like for the central regime, this can be derived by adapting the proof of \cite[Theorem 5]{BFSS01}(i) to the present context.
	
	It can similarly be shown (see \cite[Theorem 5]{BFSS01}(iii)) that the \textit{right tail} decays like $O(A^n)$ for some $A<1$, when $t = \alpha n$ with $\alpha > \alpha_0$.
\end{remark}

In order to transfer this asymptotic result on the $2$-core to the largest block, we need a generalisation of the double-counting lemma from \cite[Appendix D]{BFSS01} to account for the vertices of degree two.

\begin{lemma}\label{lem:double-counting_m}
	Let $\alpha_0,\beta_0\in (0,1)$ be as above, let $q = O(1)$, and assume that $r = O(t^{1/2 + \varepsilon})$ for some $0 < \varepsilon < 1/6$.
	Let then $c_{n,t,m}$ be the number of cubic maps with $n$ edges, and a 2-core with $t = \alpha_0 n + qn^{2/3}$ edges and $m = \beta_0t + r$ vertices of degree two.
	Further let $c^*_{n,t,m}$ denote the number of cubic maps with $n$ edges, and a largest block with $t$ edges and $m$ vertices of degree two.
	Then there exists $A < 1$, independent from the choices of $q$ and $r$, such that
	\begin{align*}
		c^*_{n,t,m} = \frac{n}{t}c_{n,t,m}\left(1 + O(A^n)\right).
	\end{align*}
\end{lemma}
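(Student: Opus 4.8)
The plan is to adapt the double-counting argument of \cite[Appendix D]{BFSS01}, carrying the extra parameter $m$ (the number of degree-two vertices in the 2-core) through the bijective exchange of roots. First I would set up the double-rooted model: consider pairs $(M, e')$ where $M$ is a cubic map with $n$ edges and $e'$ is a distinguished directed edge of $M$, in addition to its root edge. There are exactly $3n$ choices for $e'$ (one for each edge, in the chosen direction), so the number of such pairs with the 2-core of $M$ having $t$ edges and $m$ degree-two vertices is $3n\cdot c_{n,t,m}$; similarly the number of pairs where the \emph{largest block} of $M$ has $t$ edges and $m$ degree-two vertices is $3n\cdot c^*_{n,t,m}$ (the distinguished edge plays no role yet). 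The idea is then to partition these pairs according to whether $e'$ lies in the relevant component and, when it does, to re-root $M$ at $e'$; the map obtained has its original root sitting somewhere, and one checks that this operation is ``almost'' a bijection between the two families, up to a controlled error.

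The key steps, in order. (1) \textbf{Identify the dominant contribution.} For a pair $(M,e')$ counted with the 2-core having size $(t,m)$, if $e'$ happens to lie in the 2-core then re-rooting at $e'$ produces a cubic map whose 2-core is the same map but now \emph{contains} the old root; and because $t = \alpha_0 n + qn^{2/3}$ with $\alpha_0=1/\sqrt3$, a typical 2-core of that size is the \emph{unique largest block} of $M$ — this must be justified using the tail estimates recorded in Remark~\ref{rem:airy_tails}, namely that the probability that a second block has size $\ge \alpha n$ for any fixed $\alpha$ bounded away from $0$ is exponentially small, and more precisely that the probability that \emph{two} blocks each have size $\Omega(n)$ is $O(A^n)$ for some $A<1$. (2) \textbf{Count the re-rootings.} A cubic map whose largest block has $t$ edges has exactly $t$ re-rootings at an edge of that block, so summing over all $t$ choices of $e'$ inside the largest block gives the factor $t$; hence $3n\cdot c_{n,t,m}$ and $t\cdot 3\cdot (\text{maps with largest block of size }(t,m))$ match up to the error from step (1). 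Dividing by $3n$ and by $3$ (the pure-periodicity factor, which cancels as noted in the text) yields $c^*_{n,t,m} = \tfrac{n}{t} c_{n,t,m}(1+O(A^n))$. (3) \textbf{Check uniformity in $q,r$.} The error term in step (1) comes from maps with two large blocks or with the distinguished edge outside the 2-core; both are bounded by $O(A^n)$ uniformly over $t$ in the relevant window and over $m = \beta_0 t + O(t^{1/2+\varepsilon})$, because the bad events do not depend on the fine position of $(t,m)$ within the central regime — this is exactly the point where the assumptions $q=O(1)$, $r=O(t^{1/2+\varepsilon})$ with $\varepsilon<1/6$ are used, to keep $(t,m)$ inside the region where Proposition~\ref{prop:airy_2core} and Remark~\ref{rem:airy_tails} apply with the stated error.

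\textbf{Main obstacle.} The delicate point is step (1): one must show that, conditionally on the 2-core having $t=\alpha_0 n + O(n^{2/3})$ edges, with overwhelming probability this 2-core is \emph{the} largest block, i.e. every other block is of size $o(t)$ — in fact $O(n^{2/3})$ or less — so that re-rooting inside it really lands in the ``largest block'' family, and conversely. This requires the two-sided tail control of Remark~\ref{rem:airy_tails} (left tail $O(t^{-2})$ below $\alpha_0 n$, right tail $O(A^n)$ above), applied simultaneously to a second block via an auxiliary bivariate generating function marking two distinguished 2-connected components; the bookkeeping is essentially the same as in \cite[Appendix D]{BFSS01}, the only new feature being that the extra variable $u$ (and hence the parameter $m$) rides along passively through the replacement scheme \eqref{eq:c^*core} and does not interact with the component-size estimates, which is what makes the error term $O(A^n)$ independent of $r$. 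Once this is in place, the identity is immediate from counting re-rootings.
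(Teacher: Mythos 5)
Your proposal follows essentially the same route as the paper: a double-counting identity obtained by distinguishing a secondary root edge and exchanging the roles of the two roots (yielding the factor $n/t$), with the error term coming from maps whose 2-core is not the largest block, bounded by $O(A^n)$ via the tail estimates of Remark~\ref{rem:airy_tails} in the spirit of \cite[Appendix D]{BFSS01}. The paper makes that error bound concrete by decomposing any such map into its 2-core, a pendant loop map containing the larger block, and the degree-two attachment vertex (which is where the factor $m$ enters), whereas you defer to the BFSS-type bookkeeping; apart from that, and a harmless constant in your edge count ($n$ is the number of edges in this section, so there are $2n$ directed secondary roots, not $3n$ — the constant cancels in the ratio anyway), the argument is the same.
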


\begin{proof}
	The statements in \cite[Appendix D]{BFSS01} translate almost directly to our setting.
	Let $a_{n,t,m}$ be the number of cubic maps whose $2$-core is also the largest block and has $t$ edges and $m$ vertices of degree $2$.
	Similarly, let $b_{n,t,m}$ be the number of cubic maps whose $2$-core has $t$ edges and $m$ vertices of degree $2$, but is not the largest block.
	Then $c_{n,t,m} = a_{n,t,m} + b_{n,t,m}$.
	Furthermore, by rooting each cubic map at a second edge we get the following identity
	\begin{equation}\label{eq:double-counting_m}
		2nc_{n,t,m} = 2na_{n,t,m} + 2nb_{n,t,m} = 2tc^*_{n,t,m} + 2nb_{n,t,m},
	\end{equation}
	where the second equality can be derived as follows.
	Among the $2nc_{n,t,m}$ cubic maps with a 2-core of size $t$ and a second root edge, $2nb_{n,t,m}$ of them have a largest block of size $\ell > t$.
	The remaining ones have a $2$-core which is the largest block and, upon exchanging the r\^ole of the two roots, those maps are identified with the $2tc^*_{n,t,m}$ cubic maps with a largest block $\textsf{L}$ of size $t$ and a secondary root chosen in $\textsf{L}$.

	Let now $\textsf{B}$ be one of the cubic maps counted by $b_{n,t,m}$, that is $\textsf{B}$ has $n$ edges, a $2$-core $\textsf{T}$ of size $t$ with $m$ vertices of degree two, and a largest block $\textsf{L}$ of size $\ell > t$.
	By construction, $\textsf{L}$ is contained in a loop cubic map $\textsf{H}$ of size $h$ attached to $v$, a vertex of degree two of $\textsf{T}$.
	Let $\textsf{D}$ be the cubic map of size $n-h$ obtained from $\textsf{B}$ by removing $\textsf{H}$.
	Then the $2$-core of $\textsf{D}$ is also $\textsf{T}$.

	Conversely, $\textsf{B}$ can be uniquely reconstructed from $\textsf{D}$, $\textsf{H}$ and $v$.
	Thus, $b_{n,t,m}$ is bounded above by the number of such triples: there are $O(c_{n-h,t,m})$ many maps $\textsf{D}$ whose $2$-core of size $t$ contains $m$ vertices $v$ of degree two; and to every $v$ there are $O(c_{h,\ell,m}^*)$ many possible loop cubic maps $\textsf{H}$ to be attached.
	This means that there exists a constant $A_0$ such that
	\begin{align*}
		b_{n,t,m}\le A_0 \sum_{\substack{\ell,h \\ t < \ell < h < n - t}} m c_{n-h,t,m} c_{h,\ell,m}^*
		& \le mA_0 \sum_{\substack{\ell,h \\ t < \ell < h < n - t}} c_{n-h,t,m}\frac{h}{\ell} (c_{h,\ell,m} - b_{h,\ell,m}) \\
		& \le mA_0 \sum_{\substack{\ell,h \\ t < \ell < h < n - t}} \frac{h}{\ell} c_{n-h,t,m} c_{h,\ell,m},
	\end{align*}
	where the second inequality stems from \eqref{eq:double-counting_m}.
	Thus,
	\begin{equation*}
		\frac{b_{n,t,m}}{c_{n,t,m}} \le mA_0 \sum_{\substack{\ell,h \\ t < \ell < h < n - t}}
		\frac{h}{\ell} \frac{c_{n-h,t,m}}{[z^{n-h}]\, C(z)} \frac{c_{h,\ell,m}}{[z^h]\, C(z)} \frac{[z^{n-h}]\, C(z)\cdot [z^h]\, C(z)}{[z^n]\, C(z)} \frac{[z^n]\, C(z)}{c_{n,t,m}}.
	\end{equation*}

	We now separately bound the ratios present in the right hand-side of this inequality as $n\to\infty$.
	First, from \eqref{eq:estimate_C_edges} we get $[z^{n-h}]\, C(z)\cdot[z^h]\, C(z)/[z^n]\, C(z) = O(n^{5/2}h^{-5/2}(n-h)^{-5/2})$.
	Second, when $\ell$ and $h$ are fixed we fall under the regime of the left tail of the distribution of $[z^nw^tu^m]\, \widetilde C(z,w,u)/[z^n]\, C(z)$ (see Remark \ref{rem:airy_tails}), so that $c_{h,\ell,m}/[z^h]\, C(z) = O(\ell^{-1/2}\ell^{-3/2})$.
	While \eqref{eq:airy_estimate_with_exp} implies that $c_{n,t,m}/[z^n]\, C(z) = O(n^{-2/3}n^{-1/2})$.
	Furthermore, we have $t/(n-h) > \alpha_0/(1 - \alpha_0) > \alpha_0$.
	This is the regime of the right tail of the distribution of $[z^nw^tu^m]\, \widetilde C(z,w,u)/[z^n]\, C(z)$ (see again Remark \ref{rem:airy_tails}), thus $c_{n-h,t,m}/[z^{n-h}]\, C(z) = O(A_1^t)$ for some $A_1 < 1$.
	All together, this implies the existence of constants $A_2$ and $A_3$ such that
	\begin{equation}\label{eq:bad_maps}
		\frac{b_{n,t,m}}{c_{n,t,m}}\le mA_2 A_1^t \sum_{\substack{\ell,h \\ t < \ell < h < n - t}}
		\ell^{-3}h^{-3/2}(n-h)^{-5/2}n^{11/3}
		\le A_3A^n.
	\end{equation}
	And the claim follows by combining \eqref{eq:double-counting_m} with \eqref{eq:bad_maps}.
\end{proof}

With this at hand, we can now transfer the estimate \eqref{eq:airy_estimate_with_exp} to the distribution of $X_n^*$.

\paragraph{Proof of Theorem \ref{thm:largest_cubic_block}.}

Let $c_{n,t^*}^*$ denote the number of cubic maps of size $n$ whose largest cubic block has size $t^*$.
By definition, we have
\begin{align*}
	\mathbb{P}\left( X_n^* = t^* \right)
	= \frac{c_{n,t^*}^*}{[z^n]\, C(z)}
	= \frac{1}{[z^n]\, C(z)} \sum_{m,t:\, t-m = t^*} c_{n,t,m}^*.
\end{align*}
And consequently, by Lemma \ref{lem:double-counting_m}
\begin{align*}
	\mathbb{P}\left( X_n^* = t^* \right)
	\sim \frac{1}{[z^n]\, C(z)} \sum_{m,t:\, t-m = t^*} \frac{n}{t} c_{n,t,m}
	\sim \frac{1}{\alpha_0} \frac{1}{[z^n]\, C(z)} \sum_{m,t:\, t-m = t^*} c_{n,t,m}.
\end{align*}

We recall that $n$ and $t$ are (almost) proportional, with an {\it error} of order $n^{2/3}$, and $m$ and $t$ are (almost) proportional too, with an error of order $n^{1/2}$.
Thus, $n$ and $t^* = t-m$ are again (almost) proportional, with an error of order $n^{2/3}$.
And by representing $t^*$ as
\begin{align*}
	t^* = t-m = (1 - \beta_0) t - r
	= \alpha_0(1 - \beta_0) n + q (1 - \beta_0) n^{2/3} - r
	= \alpha_0(1 - \beta_0) n + q^* n^{2/3},
\end{align*}
where $q^*$ is considered as the new parameter, and using the values in \eqref{eq:expansion_B} and \eqref{eq:expansions_LC}
\begin{align*}
	\alpha_0(1 - \beta_0) = 1/2.
\end{align*}
We see that $q = q^*/(1 - \beta_0) + O(n^{\varepsilon - 1/6})$ that is, we certainly have $\A(cq) \sim \A(cq^*/(1 - \beta_0))$.
Similarly, $m = \beta_0 t + r$ rewrites to
\begin{align*}
	m = \frac{\beta_0}{1 - \beta_0} t^* + r^*,
	\quad \mbox{where} \quad
	r^* = \frac{r}{1 - \beta_0}.
\end{align*}
In particular, this means that if $t^*$ is fixed and $m$ varies then corresponding consecutive $r$ differ by $1 - \beta_0$.
Hence, we obtain
\begin{align*}
	\frac{1}{\alpha_0} \frac{1}{[z^n]\, C(z)} \sum_{m,t:\, t-m = t^*} c_{n,t,m}
	& \sim \frac{1}{\alpha_0} \sum_{r^* = r/(1 - \beta_0) = O(n^{1/2 + \varepsilon})}
		\frac{B_3}{C_3}\frac{(1 + \tau L_0)^{5/2}}{\alpha_0^{3/2}}
		\frac{e^{-r^2/(2\sigma^2\alpha_0n)}}{\sqrt{2\pi\sigma^2\alpha_0n}} n^{-2/3} c\A(cq) \\
	& \sim \frac{1}{\alpha_0} \frac{B_3}{C_3} \frac{(1 + \tau L_0)^{5/2}}{\alpha_0^{3/2}}
		\frac{1}{1 - \beta_0} n^{-2/3} c\A(cq^*/(1 - \beta_0)) \\
	& = n^{-2/3} c^*\A(c^*q^*),
\end{align*}
where the last equality is implied by \eqref{eq:cst_one} and by setting $c^* = c/(1 - \beta_0) = 4/(1 - 1/\sqrt{3})^{4/3}$.

\medskip

Finally, we make a plausibility check and note that for any fixed constant $a > 0$ we have
\begin{equation*}
	n^{-2/3}\sum_{|x|\le K \, : \, x n^{2/3} \in \mathbb{Z}} a{\mathcal A}\left(ax\right) = 1 + o(1),
	\qquad\text{as}\quad K, n\to\infty.
\end{equation*}
Hence, all neglected parts in our computations have no asymptotic weight.
This concludes the proof.
\qed

\paragraph{Proof of Theorem \ref{thm:largest_3comp}.}

We only give a short sketch of the proof here, as it follows the lines of the proof of Theorem \ref{thm:largest_cubic_block}.
First, to obtain a decomposition of cubic maps in terms of their 3-cores, we consider the \textit{near 3-core} of a cubic map, that is, we contract each bead then each cherry to a single vertex of degree two.
The family of near 3-cores of cubic maps can be obtained by considering all 3-connected cubic maps then possibly replacing each of their edges by a path of two edges.
Special care must be taken to re-root the resulting map when the original root edge was effectively replaced.

Let $C(z,w,u)$ be the generating function counting cubic maps where the variable $z$ marks the number of edges, while $w$ marks the number of edges and $u$ the number of vertices of the near 3-core.
And let $M(y)$ be the generating function counting 3-connected cubic maps where $y$ marks the number of edges.
Then the decomposition of cubic maps in terms of their near 3-core gives an equation analogue to \eqref{eq:c^*core}:
\begin{equation}\label{eq:decomp_3core}
	C(z,w,u) = M\big( zw(1 + zwuD(z)) \big)\frac{1 + 2zwuD(z)}{1 + zwuD(z)} + zA(z),
\end{equation}
where both $D(z)$ and $A(z)$ encode cubic maps without their root edge, $D(z)$ count those not rooted at an isthmus, while $A(z)$ count those without a 3-core, namely
\begin{equation*}
	A(z) = L(z) + I(z) + P(z) + (D(z) - H(z))(D(z) - H(z) - S(z)).
\end{equation*}

Again, omitting periodicities and setting $z\to y^3$ in \eqref{puis:3conn_cubic}, we get that $M(y)$ is $\Delta$-analytic at $\theta = 2^{1/3}3/8$, and furthemore we have for $y\sim\theta$ that
\begin{equation}\label{eq:expansion_M_edges}
	M(y) = M_0 - M_2Y^2 + M_3Y^3 + O(Y^4), \qquad Y = \sqrt{1 - \frac{y}{\theta}},
\end{equation}
with $M_0 = 5/256$, $M_2 = 63/256$ and $M_3 = 3\sqrt{2}/8$.
Similarly, setting $z\to z^3$ in the proof of Theorem \ref{thm:enum}(a) implies that $D(z)$ is $\Delta$-analytic at $\sigma = 2^{1/3}\sqrt{3}/6$, and for $z\sim\sigma$
\begin{equation}\label{eq:expansion_D_edges}
	D(z) = D_0 - D_2Y^2 + D_3Y^3 + O(Z^4), \qquad Z = \sqrt{1 - \frac{z}{\sigma}},
\end{equation}
with $D_0 = 2^{2/3}(9/4 - \sqrt{3})$, $D_2 = 2^{2/3}(9/2 + \sqrt{3})$ and $D_3 = 2^{1/6}36$.
Then, we apply the change of variables $x = zw$ and $v = uD(z)$ to \eqref{eq:decomp_3core}, and focus on the composition scheme
\begin{equation*}
	M\big( x(1 + xv) \big)\frac{1 + 2xv}{1 + xv},
\end{equation*}
using the exact same strategy as in the proof of Theorem \ref{thm:largest_cubic_block} (including an analogue version of Lemma \ref{lem:double-counting_m}), but where $D_2$ plays the r\^ole of $L_2$, $D_3$ of $L_3$, and $2\theta(1 + 4\theta D_0)^{-1/2}(1 + \sqrt{1 + 4\theta D_0})^{-1}$ of $(\tau^{-1} + L_0)^{-1}$.
Thus, the constants $\alpha_0$, $\beta_0$ and $c$ become
\begin{align*}
	\alpha_0 = \frac{2\theta - \sigma}{2\theta - \sigma(1 - \sigma D_2)} = \frac{1}{2} - \frac{\sqrt 3}{9}, \quad
	\beta_0 = \frac{\sigma^2D_0}{2\theta - \sigma} = \frac{19}{46} - \frac{3\sqrt 3}{23}
	\quad\text{and}\quad
	c = \frac{1}{\alpha_0}\left(\frac{3D_3}{(1 - \alpha_0)D_2}\right)^{2/3}.
\end{align*}
And, for $q$ in a bounded interval, we obtain the final estimate
\begin{equation*}
	\mathbb{P}\left( Z_n = \lfloor \alpha_0(1 - \beta_0)n + qn^{2/3} \rfloor \right)
	\sim \frac{M_3}{C_3}\left(\frac{1 + 2\sigma D_0}{1 + \sigma D_0}\right)^{5/2}\alpha_0^{-5/2} n^{-2/3} c'\A(c'q)
	\qquad\text{as } n\to\infty.
\end{equation*}
From the values in \eqref{eq:expansions_LC}, \eqref{eq:expansion_M_edges} and \eqref{eq:expansion_D_edges}, we get $\alpha_0(1 - \beta_0) = 1/4$, $c' = c/(1 - \beta_0) = 72(3/2 - 1/\sqrt{3})^{-4/3}$ and
\begin{equation*}
	\frac{M_3}{C_3}\left(\frac{1 + 2\sigma D_0}{1 + \sigma D_0}\right)^{5/2}\alpha_0^{-5/2} = 1,
\end{equation*}
which concludes the proof.
\qed

%%%%%%%%%%%%%%%%%%%%%%%%
%
% Concluding remarks
%
%%%%%%%%%%%%%%%%%%%%%%%%
\section{Concluding remarks}\label{sec:conclusions}

We include a table of small values for the various number sequences counting cubic planar maps that are 2-connected $(b_n)$, arbitrary $(c_n)$, 2-connected simple $(b_n^*)$, simple $(c_n^*)$, 2-connected triangle-free $(g_n)$, triangle-free $(f_n)$, 2-connected triangle-free simple $(g_n^*)$, triangle-free simple $(f_n^*)$.
The index $n$ is now the \textit{total} number of faces.
For completeness we also include the numbers $t_n=[z^n]\, M(z)$ of 3-connected cubic maps, which are equal to the numbers of 3-connected triangulations.

%There is a shift of 2\dots
%The values of $C_n$ and $B_n$ agree with those in Table 1 from \cite{GW02} (the indices are different since we enumerate maps according to the number of vertices instead of faces). The values for $F_n$ and $G_n$ are new.
\begin{equation*}
	\begin{array}{rrrrrrrrrr}
		n & t_n & b_n & c_n & b_n^* & c_n^* & g_n & f_n & g_n^* & f_n^*\\
		\hline
		3 & & 1 & 4 & & & 1 & 4 \\
		4 & 1 & 4 & 32 & 1 & 1 & 3 & 19 \\
		5 & 3 & 24 & 336 & 3 & 3 & 12 & 147 \\
		6 & 13 & 176 & 4096 & 19 & 19 & 64 & 1432 & 1 & 1 \\
		7 & 68 & 1456 & 54912 &	128 & 143 & 432 & 16547 & 3 & 3 \\
		8 & 399 & 13056 & 786432 & 909 & 1089 & 3244 & 206520  & 12 & 12 \\
		9 & 2530 & 124032 & 11824384 & 6737 & 8564 & 2596 & 2707135 & 59 & 59 \\
		10 & 16965 & 1230592 & 184549376 & 51683 & 69075 & 217806 & 36818912 & 325 & 325 \\
		11 & 118668 & 12629760 & 2966845440 & 407802 & 569469 & 1893226 & 515736964 & 1863 & 1890  \\
		\hline
		\hbox{\it OEIS} & \hbox{\it A000260} & \hbox{\it A000309} & \hbox{\it A002005} & \hbox{\it A058860} & \hbox{\it A058859}
	\end{array}
\end{equation*}

To conclude, let us mention that it would be also possible to analyse the size of the largest block in random cubic planar graphs (see \cite{NRR20} for recent results on this topic).
The main differences with the present work are the following.
Given a vertex-rooted cubic planar graph $G$ one cannot just define the 2-core as the block containing the root vertex, as there may be several such blocks.
This can be circumvented by considering coreless graphs as in \cite{GNR13}.
However the main difficulty, even if the 2-core is well defined, is that it may contain double edges.
One needs thus to consider rooted 2-connected cubic planar graphs counted according to the number of vertices and double edges (the total number of edges is determined by the number of vertices).
If $B^\bullet(x,y)$ is the associated generating function, where $x$ marks vertices and $y$ marks double edges, then the composition scheme is
\begin{equation}\label{eq:core-graphs}
B^\bullet\left(wxQ(x,w)^{3/2},\frac{Q(x,w)^2-1}{2Q(x,w)^2}\right),
\end{equation}
where $w$ marks the size (number of vertices) of the 2-core and $Q(x,w)$ is the generating function of sequences of `cherries', playing the same role as $1/(1-zL(z))$ in Equation \eqref{eq:c^*core}.

We see that this is a \textit{bivariate} composition scheme in which the substitution in both variables contributes to the size of the core.
It is possible to show that a bivariate scheme of the form $C(wH(z), F(z))G(z)$ leads to a map-Airy law assuming suitable analytic conditions.
And we believe this can be extended to prove a map-Airy law for the size of the largest block in a random cubic planar graph (and also for a random simple cubic map), but this would be technically more demanding and we leave it as a future project.

\bibliographystyle{abbrv}
\bibliography{biblio_cubic_maps}

\end{document}